\numberwithin{equation}{section}
\crefname{equation}{}{}
\Crefname{equation}{}{}
\newtheorem{theorem}{Theorem}[section]
\newtheorem{lemma}[theorem]{Lemma}
\newtheorem{proposition}[theorem]{Proposition}
\newtheorem{corollary}[theorem]{Corollary}
\theoremstyle{definition}
\newtheorem{definition}[theorem]{Definition}
\theoremstyle{remark}
\newtheorem{remark}[theorem]{Remark}
\newcommand{\N}{\mathbb{N}}
\newcommand{\Z}{\mathbb{Z}}
\newcommand{\R}{\mathbb{R}}
\newcommand{\C}{\mathbb{C}}
\newcommand{\g}{\mathfrak{g}}
\newcommand{\so}{\mathfrak{so}}
\newcommand{\su}{\mathfrak{su}}
\renewcommand{\S}{\mathcal{S}}
\DeclarePairedDelimiter\norm{\lVert}{\rVert}
\DeclareMathOperator{\supp}{supp}
\DeclareMathOperator{\id}{id}
\DeclareMathOperator{\ran}{ran}
\DeclareMathOperator{\diag}{diag}
\DeclareMathOperator{\tr}{tr}
\DeclareMathOperator{\sgn}{sgn}
\newcommand{\sloc}{\mathrm{sloc}}
\newcommand{\eigv}[3]{\lambda_{\mathbf{#1}}^{\mathbf{#2},\mathbf{#3}}}
\newcommand{\eigvp}[2]{\lambda_{\mathbf{#1}}^{#2}}
   \def\MR#1{}
\begin{document}

\title[Spectral multipliers on Métivier groups]{Spectral multipliers on Métivier groups}
\author{Lars Niedorf}
\address{Department of Mathematics, University of Wisconsin-Madison,
480 Lincoln Dr, Madison, WI-53706, USA}
\email{niedorf@wisc.edu}
\date{December 12, 2024}
\thanks{The author gratefully acknowledges the support by the Deutsche Forschungsgemeinschaft (DFG) through grant MU 761/12-1.}

\begin{abstract}
We prove an $L^p$-spectral multiplier theorem under the sharp regularity condition $s > d\left|1/p - 1/2\right|$ for sub-Laplacians on Métivier groups. The proof is based on a restriction type estimate which, at first sight, seems to be suboptimal for proving sharp spectral multiplier results, but turns out to be surprisingly effective. This is achieved by exploiting the structural property that for any Métivier group the first layer of any stratification of its Lie algebra is typically much larger than the second layer, a phenomenon closely related to Radon--Hurwitz numbers.
\end{abstract}

\subjclass[2020]{42B15, 22E25, 22E30, 43A85}
\keywords{Métivier group, two-step stratified Lie group, sub-Laplacian, spectral multiplier, spectral cluster, restriction type estimate, sub-Riemannian geometry}

\maketitle

\section{Introduction}

\subsection{Spectral multipliers} The celebrated Mikhlin--Hörmander theorem \cite{Hoe60} states that the Fourier multiplier operator $T$ given by
\[
\widehat{Tf}(\xi)=F(\xi)\hat f(\xi),\quad f\in \mathcal S(\R^n),\xi\in\R^n
\]
is of weak type $(1,1)$ and extends to a bounded operator on $L^p(\R^n)$ for all $p\in (1,\infty)$ whenever the Fourier multiplier $F:\R^n\to\C$ satisfies
\[
\|F\|_{L^2_{s,\sloc}(\R^n)} := \sup_{t>0} {\norm{F(t\,\cdot\,)\eta }_{L^2_s(\R^n)}} < \infty\quad \text{for some } s>n/2.
\]
Here $\eta:\R^n\to\C$ denotes some nonzero smooth function which is compactly supported in an annulus away from the origin, and $L^2_s(\R^n)\subseteq L^2(\R^n)$ is the Sobolev space of fractional order $s\ge 0$. Note that different choices of $\eta$ lead to equivalent norms. The regularity condition $s>n/2$ is sharp and cannot be decreased, see for example \cite{SiWr01}.

The Mikhlin--Hörmander theorem implies the following spectral multiplier result for the Laplacian $-\Delta=-(\partial_{x_1}^2+\dots+\partial_{x_n}^2)$ on $\R^n$. Given a Borel measurable function $F:\R\to\C$, which we call a \textit{spectral multiplier} in the following, the operator $F(-\Delta)$ defined by functional calculus corresponds to a radial Fourier multiplier operator since
\[
(F(-\Delta)f)^\wedge(\xi)=F(|\xi|^2)\hat f(\xi).
\]
Thus, the Mikhlin--Hörmander theorem implies that $F(-\Delta)$ is bounded as an operator on $L^p(\R^n)$ for all $p\in (1,\infty)$ and is of weak type $(1,1)$ whenever
\[
\|F\|_{L^2_{s,\sloc}} := \sup_{t>0} {\norm{F(t\,\cdot\,)\eta }_{L^2_s(\R)}} < \infty\quad \text{for some } s>n/2.
\]

\subsection{Sub-Laplacians}

Let $G$ be a stratified Lie group and $\g=\g_1\oplus\dots\oplus\g_r$ be the stratification of the Lie algebra of $G$, that is, $[\g_i,\g_j]\subseteq \g_{i+j}$ for all $i,j\in \N\setminus\{0\}$ (with the understanding that $\g_m=\{0\}$ for $m>r$), and $\g_1$ generates $\g$ as a Lie algebra. Given a basis $X_1,\dots,X_{d_1}$ of the first layer $\g_1$, we can identify it with left-invariant vector fields on $G$ via the Lie derivative, and consider the associated left-invariant sub-Laplacian~$L$, which is given by
\[
L=-(X_1^2+\dots+X_{d_1}^2).
\]
Let $G$ be equipped with a left-invariant Haar measure and $L^p(G)$ be the associated Lebesgue space. The sub-Laplacian $L$ is positive (that is, $(Lf,f)_{L^2(G)}\ge 0$ for all functions $f$ in the domain of $L$, which is given by $\{f\in L^2(G):Lf\in L^2(G) \}$) and self-adjoint on $L^2(G)$. Hence, one can define via functional calculus for every spectral multiplier $F:\R\to\C$ the operator
\[
F(L) = \int_0^\infty F(\lambda)\,dE(\lambda)\quad \text{on } L^2(G),
\]
where $E$ denotes the spectral measure of $L$. By the spectral theorem, the operator $F(L)$ is bounded on $L^2$ if and only if the multiplier $F$ is $E$-essentially bounded. Christ \cite{Ch91}, and Mauceri and Meda \cite{MaMe90} showed that $F(L)$ is of weak type $(1,1)$ and bounded on all $L^p$-spaces for $p\in(1,\infty)$ if
\[
\|F\|_{L^2_{s,\sloc}} < \infty\quad \text{for some } s>Q/2,
\]
where $Q=\dim \g_1+2\dim \g_2+\dots +r\dim \g_r$ is the \textit{homogeneous dimension} of~$G$. For sub-Laplacians on Heisenberg (type) groups, Müller and Stein \cite{MueSt94}, and Hebisch \cite{He93} showed that the threshold $s>Q/2$ can even be pushed down to $s>d/2$, with $d$ being the topological dimension of the underlying group. Since then, this result has been extended to other specific classes of two-step stratified Lie groups \cite{Ma12,MaMue14b,Ma15}, and also to related settings \cite{CoSi01,MaMue14a,CoKlSi11,AhCoMaMue20,DaMa20,DaMa22a,DaMa22b}.

Conversely, Martini, Müller, and Nicolussi Golo \cite{MaMueNi23} were able to show for a very general class of smooth second-order real differential operators associated with a bracket-generating sub-Riemannian structure on smooth $d$-dimensional manifolds that at least regularity of order $s\geq d\left|1/p-1/2\right|$ is necessary for having the spectral multiplier estimate
\[
\norm{F(L)}_{p\to p} \le C_{p,s} \|F\|_{L^\infty_{s,\sloc}}
\]
for every bounded Borel function $F$, where
\[
\|F\|_{L^\infty_{s,\sloc}} := \sup_{t>0} {\norm{F(t\,\cdot\,)\eta }_{L^\infty_s(\R^n)}}.
\]
In particular, their result implies that regularity of order $s\ge d/2$ is necessary to have boundedness on all $L^p$-spaces. On the other hand, suppose we have a Mikhlin--Hörmander type result of the form
\[
\| F(L) \|_{p\to p} \le C_{p,s} \| F\|_{L^2_{s,\sloc}}
\quad\text{for all } p\in (1,\infty) \text{ and } s>d/2.
\]
Then, replacing $\|\cdot\|_{L^2_{s,\sloc}}$ by the stronger norm $\| \cdot \|_{L^\infty_{s,\sloc}}$, we obtain
\[
\| F(L) \|_{p\to p} \le C_{p,s} \| F\|_{L^\infty_{s,\sloc}} \quad\text{for all } s>d\left|1/p-1/2\right|,
\]
and all $p\in (1,\infty)$, which follows by interpolation with the trivial $L^2$-$L^2$ estimate from the spectral theorem. In the range $1\le p\le 2(d+1)/(d+3)$ such \textit{$p$-specific} spectral multiplier estimates are known to hold for radial Fourier multipliers on $\R^n$ even with the weaker $\smash{L^2_{s,\sloc}}$-norm, see \cite{Ch85,Se86,LeRoSe14}. With the $\smash{L^2_{s,\sloc}}$-norm in place of the $\smash{L^\infty_{s,\sloc}}$-norm on the right-hand side, these $p$-specific spectral multiplier estimates yield in particular that the Bochner-Riesz means given by
\[
F(L)=(1-tL)^\delta_+,\quad t > 0
\]
are uniformly bounded on $L^p$ under the regularity condition $\delta>d\left|1/p-1/2\right|-1/2$, which is the same order of regularity as in the Bochner-Riesz conjecture, see \cite{Ta99}.

So far it is an open question if regularity of order $s>\max\{d\left|1/p-1/2\right|,1/2\}$ with respect to the $L^2_{s,\sloc}$-norm is generally sufficient to provide spectral multiplier estimates. In sub-elliptic settings, the only results in the literature seem to be those of \cite{ChOu16,Ni22} for Grushin operators and those of \cite{Ni24} for sub-Laplacians on Heisenberg type groups.

\subsection{Métivier groups versus Heisenberg type groups}


The purpose of this paper is to extend the spectral multiplier result for sub-Laplacians on Heisenberg type groups of \cite{Ni24} to the larger class of Métivier groups.


Before stating the main result, we need to introduce some notation. Let $G$ be a two-step stratified Lie group, that is, a connected, simply connected nilpotent Lie group whose Lie algebra $\g$ (which is the tangent space $T_eG$ at the identity $e$ of $G$) admits a decomposition $\g=\g_1\oplus\g_2$ into two non-trivial subspaces $\g_1,\g_2\subseteq\g$, where $[\g_1,\g_1]=\g_2$ and $\g_2\subseteq\g$ is contained in the center of $\g$. In the following, we refer to $\g_1$ and $\g_2$ as being the \textit{first} and \textit{second layer} of $\g$, respectively. Let
\begin{equation}\label{eq:dimensions}
d_1=\dim \g_1\ge 1,\quad d_2=\dim \g_2\ge 1\quad \text{and}\quad d=\dim \g.
\end{equation}
Given a basis $X_1,\dots,X_{d_1}$ of the first layer $\g_1$, we identify each element of the basis by a left-invariant vector field on $G$ via the Lie derivative, and consider the associated sub-Laplacian $L$, which is the second-order differential operator
\begin{equation}\label{def:sub-Laplacian}
L = -(X_1^2+\dots+X_{d_1}^2).
\end{equation}
Let $\g_2^*$ denote the dual of $\g_2$. Any element $\mu\in\g_2^*$ gives rise to a skew-symmetric bilinear form $\omega_\mu:\g_1\times\g_1\to\R$ via
\[
\omega_\mu(x,x') = \mu([x,x']),\quad x,x'\in\g_1.
\]
The group $G$ is called a \textit{Métivier group} if $\omega_\mu$ is non-degenerate for all $\mu\in \g_2^*\setminus\{0\}$, which means that $\mathfrak r_\mu=\{0\}$ for the radical
\[
\mathfrak r_\mu = \{x\in \g_1:\omega_\mu(x,x')=0\text{ for all } x'\in\g_1\}.
\]
We choose a basis $U_1,\dots,U_{d_2}$ of the second layer $\g_2$. Let $\langle\cdot,\cdot\rangle$ be the inner product rendering $X_1,\dots,X_{d_1},U_1,\dots,U_{d_2}$ an orthonormal basis of $\g$. The inner product $\langle\cdot,\cdot\rangle$ induces a norm on the dual $\g_2^*$ which we denote by $|\cdot|$. Let $J_\mu$ be the skew-symmetric endomorphism such that
\begin{equation}\label{eq:skew-form-ii}
\omega_\mu(x,x') =\langle J_\mu x,x'\rangle,\quad x,x'\in\g_1.
\end{equation}
Then $G$ is a Métivier group if and only if $J_\mu$ is invertible for all $\mu\in\g_2^*\setminus\{0\}$. We call $G$ a \textit{Heisenberg type group} if the endomorphisms $J_\mu$ are orthogonal for all $\mu\in \g_2^*$ of length 1, which means that
\[
J_\mu^2 = - |\mu|^2 \id_{\g_1}\quad\text{for all } \mu\in \g_2^*.
\]
The class of Heisenberg type groups is a strict subclass of that of Métivier groups, an example of a Métivier group that is not of Heisenberg type may be found for instance in the appendix of \cite{MueSe04}, see also \cref{sec:spectral-theory}, where we analyze the spectral properties of the matrices $J_\mu$.

Passing to the broader class of Métivier groups generally involves giving up on the rotation invariant structure of Heisenberg type groups. Specifically, the matrices $J_\mu$ are no longer orthogonal for $|\mu| = 1$, resulting in a significantly more complicated spectral decomposition into eigenspaces parameterized by $\mu$. Similarly, the spectral properties of the sub-Laplacian $L$ become considerably more challenging to analyze compared to the Heisenberg type case, see \cref{sec:spectral-theory} below.

\subsection{Statement of the main results}

The main result of this paper is the following $p$-specific spectral multiplier estimate for Métivier groups, with an accompanying result for Bochner--Riesz multipliers, where the order of regularity is the same as in the Bochner--Riesz conjecture. As before, let
\[
\|F\|_{L^2_{s,\sloc}} = \sup_{t>0} {\norm{F(t\,\cdot\,)\eta}_{L^2_s(\R)}}.
\]
Given $n\in\N\setminus\{0\}$, let $p_n=2(n+1)/(n+3)$ be the  \textit{Stein--Tomas exponent} on $\R^n$. We put $p_{d_1,d_2}=p_{d_2}$ for $(d_1,d_2)\notin\{ (8,6),(8,7)\}$, $p_{8,6}=17/12$, and $p_{8,7}=14/11$.

\begin{theorem}\label{thm:multiplier}
Let $G$ be a Métivier group of topological dimension $d$, and, as in \cref{def:sub-Laplacian}, let $L$ be a sub-Laplacian on $G$. If $1\le p \le p_{d_1,d_2}$ with dimensions $d_1,d_2$ as in \eqref{eq:dimensions}, the following statements hold:
\begin{enumerate}
\item If $p>1$ and if $F:\R\to \C$ is a bounded Borel function such that \label{thm:part-1}
\[
\|F\|_{L^2_{s,\sloc}} < \infty
\quad\text{for some } s > d \left(1/p - 1/2\right),
\]
then the operator $F(L)$ is bounded on $L^p(G)$, and
\[
\norm{F(L)}_{L^p\to L^p} \le C_{p,s} \|F\|_{L^2_{s,\sloc}}.
\]
\item For any $\delta> d \left(1/p - 1/2\right)- 1/2$, the Bochner--Riesz means $(1-tL)^\delta_+$, $t\ge 0$, are uniformly bounded on $L^p(G)$.\label{thm:part-2}
\end{enumerate}
\end{theorem}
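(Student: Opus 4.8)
The plan is to derive part \eqref{thm:part-1} of \cref{thm:multiplier} from a restriction type estimate for spectrally localised operators, combined with the by-now-standard machinery that converts such an estimate into $p$-specific spectral multiplier bounds. The sub-Laplacian $L$ generates a heat semigroup with Gaussian kernel bounds, and $\cos(t\sqrt{L})$ propagates with finite speed with respect to the Carnot--Carathéodory distance $\varrho$; both facts are classical on stratified groups. The analytic heart of the argument is then a restriction type estimate asserting that, for every $F$ supported in a fixed compact subset of $(0,\infty)$, every $R\ge 1$, and every $1\le p\le p_{d_1,d_2}$, the convolution kernel $K_R$ of $F(R^{-1}\sqrt{L})$ satisfies a weighted Plancherel bound of the form $\norm{(1+R\varrho(\cdot,e))^{\gamma}K_R}_{L^2(G)}\lesssim_\gamma R^{d(1/p-1/2)}\norm{F}_{L^2(\R)}$ for arbitrarily large weight exponents $\gamma$. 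Granting this, a Calderón--Zygmund decomposition — split a multiplier $F$ with $\norm{F}_{L^2_{s,\sloc}}<\infty$ dyadically in the $\sqrt{L}$-frequency, control the local part through the restriction estimate and the off-diagonal part through the Gaussian bounds and the weight — yields $\norm{F(L)}_{L^p\to L^p}\lesssim_{p,s}\norm{F}_{L^2_{s,\sloc}}$ whenever $s>d(1/p-1/2)$. This last step proceeds exactly as for Grushin operators in \cite{ChOu16,Ni22} and for Heisenberg type groups in \cite{Ni24}, and I would only indicate the adjustments.

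The substance is the restriction estimate, and this is where the geometry of $G$ enters. Using the Plancherel formula for $G$ — equivalently, the group Fourier transform along the generic coadjoint orbits — one Fourier-transforms in the central variable, obtaining on the fibre over $\mu\in\g_2^*\setminus\{0\}$ that $L$ is unitarily equivalent to an anisotropic harmonic oscillator $\sum_{j=1}^{d_1/2}\lambda_j(\mu)\bigl(-\partial_{\xi_j}^2+\xi_j^2\bigr)$, where $\lambda_1(\mu)\ge\dots\ge\lambda_{d_1/2}(\mu)>0$ are the symplectic eigenvalues of $J_\mu$; this is precisely the spectral analysis of the $J_\mu$ carried out in \cref{sec:spectral-theory}. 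Hence the joint eigenfunctions are products of Hermite functions and the eigenvalues are $\sum_j(2k_j+1)\lambda_j(\mu)$, $k\in\N^{d_1/2}$. The $L^p\to L^2$ bound is then obtained by combining, in the $d_2$-dimensional central variable $\mu$, a Stein--Tomas type restriction estimate, with mere $L^2$-orthogonality of the Hermite expansion in the $d_1/2$ oscillator directions together with the sharp $L^p$-bounds for the relevant Hermite functions. The decisive point — and the sense in which the estimate ``looks suboptimal yet is effective'' — is that the loss from treating the oscillator directions crudely is governed by $d_1$, whereas the Stein--Tomas gain lives only in the $d_2$ central directions; since on a Métivier group the Radon--Hurwitz constraints on the invertibility of all $J_\mu$ force $d_1$ to be substantially larger than $d_2$ in all but a few small configurations, this disparity is exactly what upgrades the crude bound to the sharp exponent $d(1/p-1/2)$ rather than the homogeneous-dimension exponent $Q(1/p-1/2)$.

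The main obstacle, and the genuinely new difficulty compared with \cite{Ni24}, is the loss of rotation invariance: for a Métivier group that is not of Heisenberg type the $J_\mu$ are not orthogonal for $|\mu|=1$, so the symplectic eigenvalues $\lambda_j(\mu)$ are non-constant functions of $\mu$ on the unit sphere — merely Lipschitz and homogeneous of degree one, with possible crossings. Consequently the spectral decomposition of the fibre operators varies with $\mu$, the restriction hypersurfaces $\{\sum_j(2k_j+1)\lambda_j(\mu)=1\}$ governing the Stein--Tomas step are only Lipschitz rather than spherical, and every Hermite restriction estimate must be made uniform in $\mu$ over the sphere, the sole available leverage being that, by the Métivier condition, $\lambda_{d_1/2}(\mu)$ is bounded away from $0$ there. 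Controlling the clustering and separation of the $\lambda_j(\mu)$ uniformly, handling the ensuing loss of curvature, and integrating the fibrewise bounds against $d\mu$ with the correct powers, is where the real work lies; in the borderline pairs $(d_1,d_2)\in\{(8,6),(8,7)\}$, where $d_1$ is too small for the generic argument to reach the full Stein--Tomas range, one is forced down to the restricted ranges $p\le 17/12$ and $p\le 14/11$ respectively.

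Finally, part \eqref{thm:part-2} is deduced from part \eqref{thm:part-1}. Writing $(1-tL)^\delta_+=F_\delta(tL)$ with $F_\delta(\lambda)=(1-\lambda)^\delta_+$, one has $\norm{F_\delta}_{L^2_{s,\sloc}}<\infty$ precisely when $s<\delta+1/2$, and this quantity is invariant under the dilation $F_\delta\mapsto F_\delta(t\,\cdot\,)$; hence for $\delta>d(1/p-1/2)-1/2$ one may choose $s$ with $d(1/p-1/2)<s<\delta+1/2$ and apply part \eqref{thm:part-1} to get $\sup_{t>0}\norm{(1-tL)^\delta_+}_{L^p\to L^p}\le C_{p,s}\norm{F_\delta}_{L^2_{s,\sloc}}<\infty$ for $1<p\le p_{d_1,d_2}$. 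The endpoint $p=1$ of \eqref{thm:part-2}, which is not covered by part \eqref{thm:part-1}, is obtained separately from a uniform $L^1$-bound for the convolution kernels of $(1-tL)^\delta_+$, itself a consequence of the fibrewise kernel estimates underlying the restriction estimate.
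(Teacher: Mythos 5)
There is a genuine gap at the analytic heart of your argument. You assert a restriction-type weighted Plancherel bound of the form $\norm{(1+R\varrho(\cdot,e))^{\gamma}K_R}_{L^2(G)}\lesssim_\gamma R^{d(1/p-1/2)}\norm{F}_{L^2(\R)}$, with the topological dimension $d$ in the exponent, and then feed this into the Fefferman--Calder\'on--Zygmund scheme. But such an estimate does not hold and is not proved anywhere in the paper. The plain restriction argument -- Stein--Tomas in the central variable plus orthogonality of Hermite expansions in the oscillator directions, which is exactly what you describe -- produces the exponent $Q(1/p-1/2)$ with the \emph{homogeneous} dimension $Q=d_1+2d_2$, not $d=d_1+d_2$; the paper spells this out explicitly around \cref{eq:intro-hoelder-1}. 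The passage from $Q$ to $d$ is precisely the content that still needs to be supplied, and your proposal skips it.

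What the paper actually does is insert an additional dyadic truncation along the spectrum of the central partial Laplacian $U=(-(U_1^2+\dots+U_{d_2}^2))^{1/2}$, writing $(F^{(\iota)}\psi)(\sqrt L)=\sum_\ell (F^{(\iota)}\psi)(\sqrt L)\chi(2^\ell U)$. The truncated restriction estimate (\cref{thm:restriction-type}) only gives decay $2^{-\ell d_2(1/p-1/2)}$, and it involves the Cowling--Sikora norm $\norm{F}_{2^\ell,2}$ which is \emph{stronger} than $\norm{F}_{L^2}$ -- this is the sense in which it looks suboptimal. The sharp exponent $d(1/p-1/2)$ only emerges in the summation over $\ell$: the first-layer weighted Plancherel estimate (\cref{prop:weighted-plancherel-1}) shows the truncated kernel is essentially supported at scale $R_\ell\times R^2$ rather than $R\times R^2$, so H\"older loses only $(R_\ell^{d_1}R^{2d_2})^{1/q}$ rather than $R^{Q/q}$, and the surplus is then absorbed in the geometric sum over $\ell$ -- but this absorption requires $d_1-d_2-q/2\ge 0$, which is where the Radon--Hurwitz numerology $d_1>3d_2/2$ of \cref{prop:Radon-Hurwitz-skew} enters. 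You mention the Radon--Hurwitz disparity as motivation, but in your scheme there is no mechanism that converts it into the sharp exponent, because you never introduce the truncation that makes the first-layer support gain available.

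You also omit the case $(d_1,d_2)=(4,3)$, which the paper has to treat separately in \cref{sec:4-3} using a second-layer weighted Plancherel estimate with a directional weight $\langle u,v\rangle^\alpha$ (\cref{prop:weighted-plancherel-2}); the numerological bound $d_1>3d_2/2$ fails there, and the interpolated restriction estimate of \cref{thm:restriction-type} only reaches $p\le 6/5$ rather than $p_3=4/3$ without it. Finally, your claim that the Hermite/fibrewise analysis and the ``loss of curvature'' from non-constant $\lambda_j(\mu)$ is the main work of the paper is misplaced: that analysis is contained in the cited restriction estimate of \cite{Ni25R}, which the present paper uses as a black box; the new content here is the first-layer weighted Plancherel estimate, the numerology, the $\ell$-decomposition, and the $(4,3)$ patch.

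Your deduction of part (\ref{thm:part-2}) from part (\ref{thm:part-1}) is fine for $p>1$, but the claim that the $p=1$ endpoint needs a separate $L^1$ kernel bound is not how the paper handles it: \cref{cor:reduction}, built on \cref{prop:COSY-original}, already delivers both statements for all $1\le p\le p_*$ from the same dyadic estimate \eqref{eq:reduced}.
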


By the result of \cite{MaMueNi23}, the threshold for $s$ is optimal up the endpoint and cannot be decreased. If $s>d/2$ in the first part of \cref{thm:multiplier}, the operator $F(L)$ is of weak type $(1,1)$, as stated in \cite{Ma12}, and this holds independently of the dimensions $d_1$ and $d_2$. It should be pointed out that the first part of the theorem only provides results for $d_2\ge 2$. Since the proof of \cref{thm:multiplier} relies on exploiting favorable restriction type estimates, the fact that we do not get any results for $d_2=1$ aligns well with the fact that restriction estimates are only available when $p=1$ for Heisenberg groups \cite{Mue90}, unless one passes to mixed $L^p$-norms.

Comparing this result with the spectral multiplier theorem of \cite{Ni24} for Heisenberg type groups, \cref{thm:multiplier} is a generalization of Theorem 1.1 of \cite{Ni24}, except for the cases $(d_1,d_2)=(8,6)$ and $(d_1,d_2)=(8,7)$. We give some comments on these special cases. Note that in these cases the thresholds for $p$ in \cref{thm:multiplier} are $p_{8,6}=17/12$ and $p_{8,7}=14/11$, and we get spectral multiplier estimates for any Métivier group, while \cite{Ni24} gives the same estimates with thresholds $p_{6}=14/9>17/12$ and $p_{7}=8/5>14/11$, but only under the additional assumption that $L$ is a sub-Laplacian on a Heisenberg type group. In principle, it is also possible to reproduce the results of \cite{Ni24} for the cases $(d_1,d_2)=(8,6)$ and $(d_1,d_2)=(8,7)$ using the methods of this article by additionally incorporating the second layer weighted Plancherel estimates of Martini and Müller from \cite{MaMue14b}, see \cref{sec:4-3}, and in particular \cref{rem:extension-H-type}. However, the advantage of the Heisenberg type situation in \cite{Ni24} is twofold. On the one hand, the proof of the restriction type estimates in \cite{Ni24} is based on the spectral projector bounds for twisted Laplacians of Koch and Ricci \cite{KoRi07}, while the more general Métivier group situation considered here requires to resort to spectral cluster estimates for the associated anisotropic twisted Laplacians in order to control the distribution of their eigenvalues, see \cref{sec:sketch-restriction}. On the other hand, \cite{Ni24} does not require the second layer weighted Plancherel estimates of \cite{MaMue14b}, since the restriction type estimates for Heisenberg type groups in \cite{Ni24} are stronger than those of \cref{thm:restriction-type}.

A natural question is whether $p$-specific spectral multiplier results with threshold $s>d\left|1/p-1/2 \right|$ hold beyond the class of Métivier groups. However, despite the fact that by \cite{Ni25R}, there is now a restriction type estimate that holds in great generality, the bottleneck of the present approach seems to be the availability of favorable weighted Plancherel estimates. The requirement of having these estimates appears to be a major obstacle when passing from Métivier groups to classes of two-step groups where the bilinear form $\omega_\mu$ from \cref{eq:skew-form-ii} may have a non-trivial radical, see also \cref{rem:1st-layer}.

\subsection{Structure of the paper}

In \cref{sec:sketch}, we give a brief sketch of the proofs of \cref{thm:multiplier} and compare the approach of the present paper with that of \cite{Ni24}. In \cref{sec:spectral-theory}, we analyze the spectral properties of the sub-Laplacian $L$. In \cref{sec:abstract}, we reduce the spectral multiplier estimates of \cref{thm:multiplier} to estimates for multipliers whose frequencies are supported on dyadic scales, which is a minor variation of a result of \cite{ChOuSiYa16}.

From \cref{sec:numerology} onward, we restrict to the setting of Métivier groups. In \cref{sec:numerology}, we will see that the dimension $d_1$ of the first layer of a given Métivier group is in general much larger than the dimension $d_2$ of its center, up to the exceptional cases $(d_1,d_2)\in\{(4,3),(8,6),(8,7)\}$. A first layer weighted Plancherel estimate is proven in \cref{sec:Plancherel}, and the final proof of \cref{thm:multiplier} is carried out in Sections \ref{sec:1st-layer} and \ref{sec:4-3}.

\subsection{Notation}

We let $\N=\{0,1,2,\dots\}$. The indicator function of a subset $A$ of some measurable space will be denoted by $\mathbf{1}_A$. We write $A\lesssim B$ if $A\le C B$ for a constant $C$. If $A\lesssim B$ and $B\lesssim A$, we write $A\sim B$. Given two suitable functions $f$ and $g$ on a two-step stratified Lie group $G$, let $f*g$ denote their group convolution given by
\[
f*g(x,u) = \int_{G} f(x',u')g\big((x',u')^{-1}(x,u)\big) \,d(x',u'),\quad (x,u)\in G,
\]
where $d(x',u')$ denotes the Lebesgue measure on $G$. The space of Schwartz functions on $\R^n$ will be denoted by $\mathcal S(\R^n)$. For $s\ge 0$ and $q\in [1,\infty]$, we denote by $L^q_s(\R)\subseteq L^q(\R)$ the Sobolev space of fractional order $s$. If $\chi:\R^n\to \C$ is smooth and compactly supported, we also refer to $\chi$ as a bump function.

\subsection{Acknowledgments}

I am deeply grateful to my advisor Detlef Müller for his continuous support and many fruitful discussions about the subject of this work. I wish to thank Alessio Martini for hosting me for one week at the mathematical department of the Politecnico di Torino, and especially for giving me valuable insights into the second layer weighted Plancherel estimate technique, which led to the development of \cref{sec:4-3}. I am also indebted to Andreas Seeger for pointing out the connection between the dimensions $d_1,d_2$ of Métivier groups and Radon--Hurwitz numbers.


\section{Sketch of the proof} \label{sec:sketch}

This paper builds on the methods developed in \cite{ChOuSiYa16,ChOu16,Ni22,Ni24}, which in turn originate from C.\ Fefferman's idea of using restriction estimates to prove $L^p$-boundedness for Bochner--Riesz means, see \cite{Fe73}. This idea has also proved valuable in several other settings, see for instance \cite{SeSo89,Mue89,GuHaSi13,ChOuSiYa16,SiYaYa14} and the references therein.

\subsection{The restriction type estimates}\label{sec:sketch-restriction}

The approach of \cite{Ni24} relies on suitable restriction type estimates, and we want to exploit the restriction type estimates of \cite{Ni25R} for this. We first state the restriction type estimates of \cite{Ni25R}.

Let $G$ be again an arbitrary two-step stratified Lie group with Lie algebra $\g$ and stratification $\g=\g_1\oplus\g_2$. Let $L=(-X_1^2+\dots X_{d_1}^2)$ be the sub-Laplacian associated with a basis of the first layer $\g_1$ and let $U_1,\dots,U_2$ be a basis of the second layer $\g_2$. The operators $L$ and $-iU_1,\dots,-iU_{d_2}$ admit a joint functional calculus, which allows us to introduce an additional truncation along the spectrum of the operator
\[
U=(-(U_1^2+\dots+U_{d_2}^2))^{1/2},
\]
which corresponds to a partial Laplacian on the second layer $\g_2$. This allows us to write any operator $F(L)$ as
\[
F(L) = \sum_{\ell \in\Z} F(L) \chi(2^\ell U),
\]
where $\chi\in C_c^\infty(0,\infty)$ and $\chi(2^\ell \cdot)$ is a partition of unity. 

Due to this additional truncation, which will be crucial in proving the spectral multiplier estimates of \cref{thm:multiplier}, we also refer to the corresponding restriction type estimates as \textit{truncated} restriction type estimates. A similar truncation is used in \cite{Ni24} for sub-Laplacians in Heisenberg type groups, and in \cite{Ni22} in the related setting of Grushin operators.

It should be pointed out that the method of \cite{ChOu16}, which \cite{Ni22} and \cite{Ni24} are based on, relies on \textit{weighted} restriction type estimates, which turned out to be applicable for Grushin operators, but unfortunately not for sub-Laplacians on Heisenberg type groups, see \cite[Section 8]{Ni24}.

The restriction type estimates of \cite{Ni25R} are stated in terms of the norms 
\[
\norm{F}_{M,2} = \bigg(\frac{1}{M} \sum_{K\in\Z} \,\sup _{\lambda \in [\frac{K-1}{M}, \frac{K}{M})}|F(\lambda)|^2\bigg)^{1 / 2},\quad M\in(0,\infty)
\]
which were introduced by Cowling and Sikora in \cite{CoSi01}. Given any Euclidean space of dimension $n\in\N\setminus\{0\}$, we denote by
\[
p_n := \frac{2(n+1)}{n+3}
\]
the \textit{Stein--Tomas exponent} associated with that space.

\begin{theorem}[Truncated restriction type estimates]\cite{Ni25R}\label{thm:restriction-type}
Let $G$ be a two-step stratified Lie group, and, as in \cref{def:sub-Laplacian}, let $L$ be a sub-Laplacian on $G$. Suppose that $1\le p\le \min\{p_{d_1},p_{d_2}\}$ with dimensions $d_1,d_2$ as in \eqref{eq:dimensions}. If $F:\R\to\C$ is a bounded Borel function supported in a compact subset $A\subseteq (0,\infty)$ and $\chi:(0,\infty)\to\C$ is a smooth function with compact support, then
\begin{equation}\label{eq:intro-restriction}
\norm{ F(L)\chi(2^\ell U) }_{p\to 2}
\le C_{A,p,\chi} 2^{-\ell d_2(\frac 1 p - \frac 1 2)} \|F\|_2^{1-\theta_p} \norm{F}_{2^{\ell},2}^{\theta_p} \quad \text{for all }\ell\in\Z,
\end{equation}
where $\theta_p\in [0,1]$ satisfies $1/p = (1-\theta_p) + \theta_p/\min\{p_{d_1},p_{d_2}\}$.
\end{theorem}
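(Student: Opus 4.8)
The plan is to follow the scheme of \cite{Ni25R}: reduce \eqref{eq:intro-restriction} to two endpoint estimates by interpolation, and prove each endpoint by combining the Plancherel theorem for the two-step group $G$ with restriction (Stein--Tomas type) estimates for the operators into which $L$ decomposes under the irreducible unitary representations of $G$. Since the exponent $d_2(1/p-1/2)$ and the parameter $\theta_p$ are both affine in $1/p$, it suffices to treat the extreme cases $\theta_p=0$ (that is $p=1$) and $\theta_p=1$ (that is $p=p_0:=\min\{p_{d_1},p_{d_2}\}$), where the asserted bounds read
\[
\norm{F(L)\chi(2^\ell U)}_{1\to2}\lesssim 2^{-\ell d_2/2}\norm{F}_2
\qquad\text{and}\qquad
\norm{F(L)\chi(2^\ell U)}_{p_0\to2}\lesssim 2^{-\ell d_2(1/p_0-1/2)}\norm{F}_{2^\ell,2}.
\]
Analytic interpolation carried out at a fixed scale $\ell$, with suitable powers of $|F|$ inserted, together with the standard interpolation property of the Cowling--Sikora norms $\norm{\cdot}_{N,2}$ which produces the geometric mean $\norm{F}_2^{1-\theta_p}\norm{F}_{2^\ell,2}^{\theta_p}$ (compare \cite{CoSi01,ChOu16}), then yields the full range $1\le p\le p_0$.

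For the $L^1\to L^2$ endpoint I would use that $F(L)\chi(2^\ell U)$ is a group convolution operator, so that its $L^1\to L^2$ norm equals the $L^2(G)$ norm of its convolution kernel; by the Plancherel formula for $G$ this is a weighted integral over $\g_2^*$ of $\norm{F(L_\mu)}_{\mathrm{HS}}^2\,\chi(2^\ell|\mu|)^2$, where $L_\mu$ is the image of $L$ under the representation $\pi_\mu$ attached to $\mu$ and the weight is the Plancherel density, comparable on a Métivier group to $|\mu|^{d_1/2}$. Diagonalising $J_\mu$ realises $L_\mu$ as an anisotropic harmonic oscillator with eigenvalues $\lambda_k(\mu):=\sum_j a_j(\mu)(2k_j+1)$, $k\in\N^{d_1/2}$, where $\pm i a_j(\mu)$ are the eigenvalues of $J_\mu$ and, since $G$ is Métivier, $a_j(\mu)\sim|\mu|$ uniformly in $\mu$. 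Expanding the Hilbert--Schmidt norms as sums over $k$, rescaling $\mu=2^{-\ell}\nu$ so that the cutoff forces $|\nu|\sim1$, and applying the coarea formula to the maps $\nu\mapsto 2^{-\ell}\lambda_k(\nu)$ converts the sums over eigenvalues into $\int_A|F|^2=\norm{F}_2^2$; counting the $\sim2^{\ell d_1/2}$ indices $k$ that contribute against the volume factor $2^{-\ell(d_1/2+d_2)}$ leaves the gain $2^{-\ell d_2}$, as required.

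For the $L^{p_0}\to L^2$ endpoint I would invoke restriction estimates. Using the joint functional calculus of $(L,U)$ to freeze the central frequencies on the dyadic shell $|\mu|\sim2^{-\ell}$, the estimate reduces through the representations $\pi_\mu$ to restriction-type bounds, uniform over $\mu$ on the unit sphere of $\g_2^*$, for the family of anisotropic twisted Laplacians $L_\mu$; these supply the threshold $p\le p_{d_1}$. The complementary threshold $p\le p_{d_2}$ and the exact power $2^{-\ell d_2(1/p-1/2)}$ come from a Stein--Tomas estimate in the $d_2$ central variables. To pass from a single spectral interval to an arbitrary bounded Borel $F$ supported in $A$, I would split $F$ into pieces frequency-localised at scales $\lesssim2^\ell$ and $\gtrsim2^\ell$ and sum, using almost orthogonality; here the Cowling--Sikora norm $\norm{F}_{2^\ell,2}$ is precisely what makes the summation converge, the scale $2^\ell$ matching the $\sim2^{-\ell}$ spacing of the eigenvalues of $L_\mu$ when $|\mu|\sim2^{-\ell}$.

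The hard part is the loss of rotational symmetry in passing from Heisenberg type groups to general Métivier groups. For Heisenberg type groups all $a_j(\mu)$ equal $|\mu|$, the operators $L_\mu$ are isotropic twisted Laplacians, the coarea step is trivial, and the spectral projector bounds of Koch--Ricci \cite{KoRi07} can be used directly, as in \cite{Ni24}. For a general Métivier group the $a_j(\mu)$ are merely comparable, the $L_\mu$ are genuinely anisotropic, and their eigenvalues $\lambda_k(\mu)=\sum_j a_j(\mu)(2k_j+1)$ may cluster in ways that depend delicately on $\mu$. The decisive technical input is therefore twofold: spectral cluster (Stein--Tomas type) estimates for anisotropic twisted Laplacians that are uniform in the anisotropy, and enough control on the gradients $\nabla_\mu\lambda_k$ to legitimise the coarea argument in the $L^1\to L^2$ step, the point being to rule out, or quantify, near-cancellations in $\sum_j \nabla_\mu a_j(\mu)\,(2k_j+1)$. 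Both rest on the fine spectral analysis of the endomorphisms $J_\mu$.
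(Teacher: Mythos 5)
This theorem is stated with the citation key \cite{Ni25R} and its proof is explicitly deferred to that companion paper; the present paper does not prove it, so there is no in-paper argument to compare against. What the paper does provide is a brief commentary (\cref{sec:sketch-restriction}) saying that, unlike the Heisenberg type case of \cite{Ni24} where the Koch--Ricci projector bounds suffice, the general proof requires spectral cluster estimates for the anisotropic twisted Laplacians that arise after diagonalising $J_\mu$. Your proposal correctly identifies that tool as the decisive technical input and the overall architecture (interpolation between $p=1$ and $p=p_0=\min\{p_{d_1},p_{d_2}\}$, Plancherel at $p=1$, a Stein--Tomas type argument at $p=p_0$) is a sensible reconstruction of what such a proof must look like.

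There is, however, a genuine gap in scope. The theorem is asserted for \emph{all} two-step stratified Lie groups, but your argument leans throughout on the Métivier hypothesis: you take $J_\mu$ invertible, assert $a_j(\mu)\sim|\mu|$ uniformly, take the Plancherel density to be comparable to $|\mu|^{d_1/2}$, and implicitly set the Euclidean rank $r_0=0$. In the general two-step setting the radical of $\omega_\mu$ may be nontrivial, so the representation attached to $\mu$ contains an extra Euclidean factor $\R^{r_0}$ (this is precisely the $\tau$-integral and the $e^{i\langle\tau,R_\mu^{-1}P_0^\mu x\rangle}$ phase in \cref{eq:conv-kernel}); the Plancherel density acquires the corresponding Pfaffian-type factor $\prod_n(b_n^\mu)^{r_n}$, which can degenerate; and the spectrum of $L_\mu$ is no longer purely discrete. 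A correct proof of the theorem as stated must incorporate the continuous $|\tau|^2$ contribution to $|\tau|^2+\lambda_{\mathbf k}^\mu$ and cope with eigenvalues $b_n^\mu$ that may vanish on nontrivial Zariski-closed subsets, neither of which your sketch addresses. In the opposite direction, your worry about controlling $\nabla_\mu\lambda_{\mathbf k}(\mu)$ in the $L^1\to L^2$ step is unnecessary: since $\mu\mapsto\mathbf b^\mu$ is homogeneous of degree one, the change of variables is purely radial (exactly the substitution $\mu=\rho\omega$, $\rho=(\lambda_{\mathbf k}^\omega)^{-1}\lambda$ used in the paper's own \cref{prop:weighted-plancherel-1}), and no transversality of level sets in the angular direction is required; the delicate spectral geometry is confined to the $L^{p_0}\to L^2$ endpoint, where the anisotropic cluster bounds live.
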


\begin{remark}\label{rem:ell_0}
Let again $\langle\cdot ,\cdot \rangle$ be the inner product rendering $X_1,\dots,X_{d_1},U_1,\dots,U_{d_2}$ an orthonormal basis of $\g$ and let $J_\mu$ be the matrix with $\langle J_\mu x, x'\rangle = \mu([x,x'])$ for all $x,x'\in \g_1$. Then, as shown in \cite[Remark 5.2]{Ni25R} there is a constant $\ell_0\in\N$ which depends only on the matrices $J_\mu$, the inner product $\langle\cdot,\cdot\rangle$, and the compact subset $A\subseteq (0,\infty)$ above such that
\[
F(L)\chi(2^\ell U)=0 \quad\text{for all } \ell<-\ell_0.
\]
So \cref{thm:restriction-type} should actually be read as a statement for all $\ell\in [-\ell_0,\infty)$. 
\end{remark}

Although the above theorem is valid for all two-step stratified Lie groups, the presence of the norm $\norm{F}_{2^{\ell},2}$ in \eqref{eq:intro-restriction} imposes a slight drawback. By (3.19) and (3.29) of \cite{CoSi01} (or alternatively Lemma 3.4 of \cite{ChHeSi16}), for $s>1/2$, the norm $\Vert\cdot\Vert_{M,2}$ can be estimated by
\begin{equation}\label{eq:norm}
\norm{F}_{L^2}\le \norm{F}_{M,2} \le C_s \big( \norm{F}_{L^2} + M^{-s} \norm{F}_{L^2_s} \big),
\end{equation}
which means that $\Vert\cdot\Vert_{M,2}$ is stronger than the $L^2$-norm. Ignoring for a moment the additional truncation, the restriction type estimate \eqref{eq:intro-restriction} with $\norm{F}_{2^{\ell},2}$ replaced by $\norm{F}_{L^2}$ would be equivalent (see for instance \cite[Proposition 4.1]{SiYaYa14}) to a restriction type estimate for the Strichartz projectors $\mathcal P_\lambda$, which are formally given by $\mathcal P_\lambda=\delta_\lambda(L)$, where $\delta_\lambda$ is the Dirac delta distribution at $\lambda\in\R$. 

However, the truncated restriction type estimates can still be used to prove spectral multiplier estimates for the whole class of Métivier groups, by exploiting the fact that the dimension $d_1$ of the first layer is in general much larger than the dimension $d_2$ of the second layer if $G$ is a Métivier group. This fact is closely related to a theorem by Adams \cite{Ad62} which states that the maximal number of linearly independent vector fields on a $(n-1)$-dimensional sphere is given by $\rho_{\mathrm{RH}}(n)-1$, where $\rho_{\mathrm{RH}}(n)$ denotes the $n$-th Radon–Hurwitz number.

\subsection{The spectral multiplier estimates}

The key idea of deriving spectral multiplier estimates from restriction (type) estimates, which goes back to C.\ Fefferman \cite{Fe73}, can be illustrated as follows. Let $G$ be a Métivier group, and, as in \cref{def:sub-Laplacian}, let $L$ be a sub-Laplacian on $G$. Suppose that we want to derive $L^p$-boundedness of the Bochner--Riesz mean $(1-L)_+^\delta$. If $F:\R\to\C$ is the multiplier given by
\[
F(\lambda)=(1-\lambda^2)^\delta_+,\quad \lambda\in\R,
\]
then $F(\sqrt L)=(1-L)_+^\delta$. We decompose $F$ as
\[
F = \sum_{\iota = 0}^\infty F^{(\iota)},
\]
where $|\tau|\sim 2^\iota$ whenever $\tau \in\supp \widehat{F^{(\iota)}}$ for $\iota\ge 1$. It suffices to show
\[
\norm{F^{(\iota)}(\sqrt L)}_{p\to p}\lesssim 2^{-\varepsilon\iota}
\quad\text{for some }\varepsilon>0.
\]
Let $\mathcal K^{(\iota)}$ be the convolution kernel associated with $F^{(\iota)}(\sqrt L)$, that is,
\[
F^{(\iota)}(\sqrt L) f = f * \mathcal K^{(\iota)} \quad\text{for } f\in\mathcal S(G),
\]
where $*$ denotes the group convolution. By the Fourier inversion formula, we have
\[
F^{(\iota)} ( \sqrt L) = \frac{1}{2\pi} \int_{|\tau|\sim 2^\iota} \chi(\tau/2^\iota) \hat F(\tau) \cos(\tau \sqrt L) \,d\tau,
\]
where $\chi:\R\to\C$ is some bump function. Since solutions of the wave equation associated with $L$ possess finite propagation speed (which is the reason for artificially introducing the square root of $L$) and since we have $|\tau|\sim 2^\iota$ in the integral above, the convolution kernel $\mathcal K^{(\iota)}$ is supported in the Euclidean ball $B_R\times B_{R^2}\subseteq \R^{d_1}\times\R^{d_2}$ with radius $R\sim 2^\iota$ centered at the origin (where we identify \[G\cong \g_1\oplus\g_2\cong \R^{d_1}\oplus \R^{d_2}=\R^d\] via the basis $X_1,\dots,X_{d_1}$ of $\g_1$ and some basis $U_1,\dots,U_{d_2}$ of $\g_2$).

Now suppose we had a restriction type estimate of the form
\begin{equation}\label{intro-restriction-1}
\norm{F^{(\iota)}(\sqrt L)f}_2  \lesssim \norm{F^{(\iota)}}_2 \norm{f}_p,
\end{equation}
which is, for instance, the case if $G$ is a Heisenberg type group (up to replacing the multiplier $F^{(\iota)}$ by some localized version $F^{(\iota)}\psi$), see \cite{LiZh11} and \cite{Ni24}. Then, for a function $f$ supported in a Euclidean ball of size $R \times R^2$ centered at the origin (which one can assume without loss of generality), Hölder's inequality together with the restriction type estimate \cref{intro-restriction-1} yields
\begin{equation}\label{eq:intro-hoelder-1}
\norm{F^{(\iota)}(L)f}_p \lesssim 
 R^{Q/q} \norm{F^{(\iota)}(L)f}_2
 \lesssim R^{Q/q} \norm{F^{(\iota)}}_2 \norm{f}_p,
\end{equation}
where $1/q=1/p-1/2$ and $Q$ is the homogeneous dimension of $G$, which is
\[
Q=d_1+2d_2.
\]
One can show that $F\in L^2_{\smash{Q/q+\varepsilon}}(\R)$ for some $\varepsilon>0$ if $\delta>Q/q-1/2$. Then, the last term of \eqref{eq:intro-hoelder-1} can be estimated via
\[
R^{Q/q} \norm{F^{(\iota)}}_2 \norm{f}_p
 \lesssim 2^{-\varepsilon\iota} \norm{F^{(\iota)}}_{L^2_{Q/q+\varepsilon}(\R)} \norm{f}_p.
\]
Note that this argument requires an order of regularity in terms of the homogeneous dimension $Q$. The method of \cite{Ni22,Ni24} refines the dyadic decomposition $F^{(\iota)}$ by an additional truncation along the spectrum of the operator
\[
U = (-(U_1^2+\dots+U_{d_2}^2))^{1/2}.
\]
More precisely, we localize $F^{(\iota)}$ in space by some suitable non-zero smooth function $\psi$ which is compactly supported away from the origin and decompose the operator $(F^{(\iota)}\psi)(\sqrt L)$ as
\begin{equation}\label{eq:intro-truncation}
(F^{(\iota)}\psi)(\sqrt L) = \sum_{\ell\in\Z} (F^{(\iota)}\psi)(\sqrt L) \chi(2^\ell U)= \sum_{\ell\in\Z} F^{(\iota)}_\ell(L,U),
\end{equation}
where the functions $\chi(2^\ell\,\cdot\,)$ shall form a dyadic partition of unity of $\R\setminus\{0\}$. (The decomposition in \cite{Ni22,Ni24} is slightly different since the operator $\chi(2^\ell U)$ is replaced by $\chi(2^{-\ell} L/U)$ there. However, since the spectral multiplier $F^{(\iota)}\psi$ is compactly supported away from the origin, both the truncation by $\chi(2^\ell U)$ and $\chi(2^{-\ell} L/U)$ amount to having a multiplier for $U$ supported around $2^{-\ell}$.)

The convolution kernel $\mathcal K^{(\iota)}_\ell$ of $F^{(\iota)}_\ell(L,U)$ can be written down explicitly in terms of the partial Fourier transform along the center and Laguerre functions, which is as follows. As we will show in \cref{sec:spectral-theory}, there exists non-empty, homogeneous Zariski-open subset $\g_{2,r}^*\subseteq\g_2^*$, numbers $N\in\N\setminus\{0\}$, $\mathbf r=(r_1,\dots,r_N)\in(\N\setminus\{0\})^N$, and a continuous function $\g_2^*\ni \mu \mapsto \mathbf{b}^\mu = (b^\mu_1,\dots,b^\mu_N)\in [0,\infty)^N$ such that the skew-symmetric matrices $J_\mu$ given by \cref{eq:skew-form-ii} admit the spectral decomposition
\begin{equation}\label{eq:J_mu-spectral}
-J_\mu^2 = \sum_{n=1}^N (b_n^\mu)^2 P_n^\mu \quad \text{for all } \mu \in \g_{2,r}^*,
\end{equation}
where $P_n^\mu$ is an orthogonal projections on $\g_1$ of rank $2r_j$ for all $\mu \in \g_{2,r}^*$, with pairwise orthogonal ranges. If $G$ is a Métivier group, then, for $x\in\g_1$, $u\in\g_2$,
\begin{align}
\mathcal K^{(\iota)}_\ell(x,u) = (2\pi)^{-d_2} \sum_{\mathbf{k}\in\N^N} & \int_{\g_{2,r}^*} (F^{(\iota)}\psi)\big(\sqrt{\eigvp{k}{\mu}}\big) \chi(2^\ell |\mu|) \notag \\
& \times \prod_{n=1}^N \varphi_{k_n}^{(b_n^\mu,r_n)}(R_\mu^{-1}P_n^\mu x)\, e^{i\langle \mu, u\rangle} \, d\mu,
\label{eq:intro-conv-kernel}
\end{align}
where $R_\mu\in O(d_1)$,
\[
\eigvp{k}{\mu} = \eigv{k}{b^\mu}{r} = \sum_{n=1}^N \left(2k_n+r_n\right)b_n^\mu,
\]
and $\varphi_{k_n}^{(b_n^\mu,r_n)}$ are some rescaled Laguerre functions of the form
\begin{equation}\label{eq:rescaled-Lag}
\varphi_{k_n}^{(b_n^\mu,r_n)}(z) = (b_n^\mu)^{r_n} \varphi_{k_n}((b_n^\mu)^{1/2}z),\quad z\in \R^{2r_n}.
\end{equation}
If $G$ is a Métivier group, then $b_n^\mu\neq 0$ for all $\mu\neq 0$ and all $n\in\{1,\dots,N\}$, whence
\[
b_n^\mu \sim 1 \quad \text{for } |\mu|=1.
\]
Since $\eigvp{k}{\mu}\sim 1$ and $|\mu|\sim 2^\ell$ on the support of the involved multipliers, and as the function $\mu\mapsto \textbf{b}^\mu$ is homogeneous of degree $1$, the sum in \eqref{eq:intro-conv-kernel} ranges in fact over all $\mathbf{k}\in\N^N$ satisfying
\[
|\textbf{k}|+1 \sim \eigvp{k}{\bar\mu} = |\mu|^{-1}\eigvp{k}{\mu} \sim |\mu|^{-1}\sim 2^\ell,\quad\text{where } \bar \mu = \frac{\mu}{|\mu|}.
\]
In particular, the sum of the dyadic decomposition \eqref{eq:intro-truncation} ranges only over $\ell\in [-\ell_0,\infty)$, where $\ell_0\in\N$ is some constant that depends only on the matrices $J_\mu$ and the inner product $\langle\cdot,\cdot\rangle$ on $\g$, with respect to which $X_1,\dots,X_{d_1},U_1,\dots,U_{d_2}$ is an orthonormal basis of $\g$.

A crucial observation in connection with the formula \cref{eq:intro-conv-kernel} is the following. The pointwise estimates from \cite[Equations~(1.1.44), (1.3.41), Lemma~1.5.3]{Th93} suggest that any of the rescaled Laguerre functions of the form \eqref{eq:rescaled-Lag} is essentially supported in a ball of radius comparable to
\[
(b_n^\mu)^{-1/2}(k_n+1)^{1/2} \sim |\mu|^{-1/2}(k_n+1)^{1/2} \lesssim 2^\ell
\]
centered at the origin, up to some exponentially decaying term. Hence we may think of the convolution kernel $\mathcal K^{(\iota)}_\ell$ in \eqref{eq:intro-conv-kernel} being supported in a Euclidean ball of dimensions $R_\ell \times R^2$ in place of $R\times R^2$, with $R_\ell \sim 2^\ell$. This leads us to distinguish between the case $\ell\le \iota$, where $R_\ell\lesssim 2^\iota \sim R$, and the case $\ell>\iota$. Using the truncated restriction type estimate of \cref{thm:restriction-type} and applying the Sobolev type estimate \eqref{eq:norm} for the norm $\|\cdot\|_{2^\ell,2}$, we obtain
\begin{align}
\norm{ F_\ell^{(\iota)}(L,U) }_{p\to 2}
& \lesssim 2^{-\ell d_2/q} \|(F^{(\iota)}\psi)(\sqrt{\cdot}\,)\|_2^{1-\theta_p} \|(F^{(\iota)}\psi)(\sqrt{\cdot}\,)\|_{2^{\ell},2}^{\theta_p} \notag \\
& \le 2^{-\ell d_2/q} \|(F^{(\iota)}\psi)(\sqrt{\cdot}\,)\|_{2^{\ell},2} \notag \\
& \lesssim_{\psi} 2^{-\ell d_2/q} \big( \norm{F^{(\iota)}}_{L^2} + 2^{-\ell s} \norm{F^{(\iota)}}_{L^2_s} \big),\label{eq:intro-restriction-ii}
\end{align}
where again $1/q=1/p-1/2$, and $s>1/2$.

For $\ell>\iota$, we argue as before in \cref{eq:intro-hoelder-1}, using the fact that $\mathcal K^{(\iota)}$ is supported in a ball of size $R\times R^2$ (which is however not quite true for the kernels $\smash{\mathcal K_\ell^{(\iota)}}$, but the information about the support at scale $R\times R^2$ can be preserved by introducing a cut-off function $\mathbf{1}_{CR\times CR^2}$ before doing the decomposition \eqref{eq:intro-truncation}). Similar to \eqref{eq:intro-hoelder-1}, using Hölder's inequality, but now in combination with the restriction type estimate \eqref{eq:intro-restriction-ii} yields
\begin{align}
\norm{\mathbf{1}_{CR\times CR^2} (F_\ell^{(\iota)}(L,U)f)}_p
 & \lesssim R^{Q/q} \norm{F_\ell^{(\iota)}(L,U)f}_2 \notag \\
 & \lesssim R^{Q/q} 2^{-\ell d_2/q} \big( \norm{F^{(\iota)}}_{L^2} + 2^{-\ell s} \norm{F^{(\iota)}}_{L^2_s} \big) \norm{f}_p.\label{eq:intro-hoelder-1a}
\end{align}
Summing over all $\ell > \iota$ shows that the additional factor $2^{-\ell s}$ completely makes up for the higher order of the Sobolev norm in the factor $\norm{F}_{L^2_s}$ since
\[
\sum_{\ell=\iota+1}^\infty 2^{-\ell d_2/q} \big( \norm{F^{(\iota)}}_{L^2} + 2^{-\ell s} \norm{F^{(\iota)}}_{L^2_s} \big) 
\sim 2^{-\iota d_2/q} \norm{F^{(\iota)}}_{L^2}.
\]
Hence, since $Q-d_2=d$, \eqref{eq:intro-hoelder-1a} yields
\begin{align*}
\sum_{\ell=\iota+1}^\infty \norm{\mathbf{1}_{CR\times CR^2} (F_\ell^{(\iota)}(L,U)f)}_p
& \lesssim R^{d/q} \norm{F^{(\iota)}}_{L^2} \norm{f}_p \\
& \sim \norm{F^{(\iota)}}_{L_{d/q}^2} \norm{f}_p,
\end{align*}
so that we end up with an order of regularity $d/q=d\left(1/p-1/2\right)$ instead of $Q/q=Q\left(1/p-1/2\right)$ with homogeneous dimension $Q$.

For $\ell\le \iota$, we decompose the support of $f$, which is of size $R \times R^2$, into a grid of balls of size $R_\ell \times R^2$, and decompose $f$ into a sum of functions $f_m^{(\ell)}$, each of which is supported in one of the balls of the grid. Since the convolution kernel $\mathcal K_\ell^{(\iota)}$ is essentially supported in a ball of size $R_\ell\times R^2$, we may think of the function $F_\ell^{(\iota)}(L)f_m^{(\ell)}$ being also supported in a ball of size comparable to $R_\ell\times R^2$. Using again Hölder's inequality and the restriction type estimate \eqref{eq:intro-restriction-ii} yields
\begin{align}
\norm{F_\ell^{(\iota)}(L,U)f_m^{(\ell)}}_p
& \lesssim 
 (R_\ell^{d_1} R^{2d_2})^{1/q} \norm{F_\ell^{(\iota)}(L,U)f_m^{(\ell)}}_2 \notag \\
& \lesssim (R_\ell^{d_1-d_2} R^{2d_2})^{1/q} \big( \norm{F^{(\iota)}}_{L^2} + 2^{-\ell s} \norm{F^{(\iota)}}_{L^2_s} \big) \norm{f_m^{(\ell)}}_p.
\label{eq:intro-hoelder-2}
\end{align}
Since $\ell\le \iota$, and
\[
2^{-\ell s} \norm{F^{(\iota)}}_{L^2_s}\sim 2^{(\iota-\ell)s}\norm{F^{(\iota)}}_{L^2},
\]
the last line of \eqref{eq:intro-hoelder-2} is comparable to $\norm{F^{(\iota)}}_{L^2} \norm{f_m^{(\ell)}}_p$ times
\[
(2^{\ell (d_1-d_2)} 2^{2\iota d_2})^{1/q} 2^{(\iota-\ell)s}
 = (2^{(\ell-\iota)(d_1-d_2-sq)} 2^{\iota d})^{1/q}.
\]
In a precise argument that we will see later, since
\[
2^{\iota d/q}\norm{F^{(\iota)}}_{L^2}\sim 2^{-\varepsilon\iota}\norm{F^{(\iota)}}_{L_{d/q+\varepsilon}^2},
\]
the argument of the proof works as long as we can bound the sum
\[
\sum_{\ell=-\ell_0}^\iota \big(2^{(\ell-\iota)(d_1-d_2-sq)/q-\iota\varepsilon /2}\big)^p.
\]
By first choosing $\varepsilon>0$ and then $s>1/2$ sufficiently close to $1/2$, this can be achieved as long as
\[
d_1-d_2-\frac q 2\ge 0.
\]
Since we have in particular assumed $1\le p\le p_{d_2}=2(d_2+1)/(d_2+3)$ in \cref{thm:multiplier}, we have $1/q=1/p-1/2\ge 1/(d_2+1)$ and thus
\begin{equation}\label{eq:intro-condition}
d_1 - d_2 - \frac q 2
\ge d_1 - \frac{3d_2}{2} - \frac{1}{2}.
\end{equation}
Since $G$ is assumed to be a Métivier group, the map $\g_2^*\to(\g_1/\R y)^*,\mu\mapsto \omega_\mu(\cdot,x')$ is injective for $x'\neq 0$, whence $d_1>d_2$. But as a matter of fact, $d_1$ is in general much larger than $d_2$, and the latter term of \eqref{eq:intro-condition} is non-negative for any Métivier group except for the cases where
\begin{equation}\label{eq:intro-exceptional}
(d_1,d_2)\in\{(4,3),(8,6),(8,7)\}.
\end{equation}

\subsection{On the exceptional cases}

In the present approach, the fact that the convolution kernel $\smash{\mathcal K_\ell^{(\iota)}}$ is essentially supported in a ball of size $R_\ell\times R^2$ is derived from a weighted Plancherel estimate of the form
\begin{equation}\label{eq:intro-Plancherel-1}
\int_G \big| |x|^\alpha \mathcal K_\ell^{(\iota)}(x,u) \big|^2 \,d(x,u) \lesssim_\alpha 2^{\ell(2\alpha-d_2)} \norm{F^{(\iota)}}_{L^2}^2,\quad\alpha\ge 0.
\end{equation}
The remaining cases \eqref{eq:intro-exceptional} could also be covered if we additionally had a weighted Plancherel estimate of the form 
\begin{equation}\label{eq:intro-Plancherel-2}
\int_G \big| |u|^\beta \mathcal K_\ell^{(\iota)}(x,u) \big|^2 \,d(x,u) \lesssim_\alpha 2^{\ell(2\beta-d_2)} \norm{F^{(\iota)}}_{L^2_\beta}^2,\quad\beta\ge 0,
\end{equation}
with weight $|u|^\beta$ on the second layer. The weighted Plancherel estimate \eqref{eq:intro-Plancherel-2} would in fact yield that we may think of $\smash{\mathcal K_\ell^{(\iota)}}$ as being essentially supported in an even smaller ball of size $R_\ell\times R_\ell R$ rather than $R_\ell\times R^2$, up to error terms. Such estimates with weight on the second layer are available for Heisenberg type groups \cite{He93}, the more general class of Heisenberg--Reiter (type) groups \cite{Ma15}, and for classes of two-step groups in low dimensions \cite{MaMue14b}. It is possible to prove a second layer weighted Plancherel estimate on arbitrary two-step stratified Lie groups for small $\beta\ge 0$ by homogeneity considerations \cite{MaMue16}, which we unfortunately cannot exploit in our approach, since it requires \eqref{eq:intro-Plancherel-1} and \eqref{eq:intro-Plancherel-2} for arbitrarily large powers $\alpha,\beta\ge 0$.

On the other hand, according to \cite[Proposition 20]{MaMue14b}, second layer weighted Plancherel estimates result from appropriate bounds for $\mu$-derivatives of both the eigenvalues $b_1^\mu,\dots,b_N^\mu$ and the associated spectral projections $P_1^\mu,\dots,P_N^\mu$ from the spectral decomposition from \cref{eq:J_mu-spectral}, that is,
\[
-J_\mu^2 = \sum_{n=1}^N (b_n^\mu)^2 P_n^\mu \quad \text{for all } \mu \in \g_{2,r}^*.
\]
As we will discuss in \cref{sec:4-3}, such bounds are not always satisfied, since the functions $\mu\mapsto b_n^\mu$ and $\mu\mapsto P_n^\mu$ can admit singularities on the Zariski-closed subset $\g_2^*\setminus\g_{2,r}^*$. Fortunately, in the case $(d_1,d_2)=(4,3)$, the set of those singularities, which is always a cone due to homogeneity, is either a plane or a line in $\g_2^*$, which allows us to prove a second layer weighted Plancherel estimate not with the complete weight $|u|^\beta$, but with a weight corresponding to a directional derivative in $\mu$ whose direction lies within the Zariski-closed subset.

In the higher-dimensional exceptional cases, however, the set of possible singularities may no longer be a linear subspace, which is why the low-dimensional approach for $(d_1,d_2) = (4,3)$ fails in higher dimensions.

\section{Spectral decompositions} \label{sec:spectral-theory}

Let $G$ be a two-step stratified Lie group. Then its Lie algebra $\g$ admits a decomposition $\g=\g_1\oplus\g_2$, where $[\g_1,\g_1]=\g_2$ and $\g_2\subseteq \g$ is contained in the center of~$\g$. If we identify $G$ with its Lie algebra $\g$ via exponential coordinates, the group multiplication is given by
\[
(x,u)(x',u')=\left(x+x',u+u'+\tfrac 1 2 [x,x']\right),\quad x,x'\in \g_1,u,u'\in \g_2.
\]
Furthermore, we identify $\g_1\cong\R^{d_1}$ and $\g_2\cong\R^{d_2}$ by choosing a basis $X_1,\dots,X_{d_1}$ of $\g_1$ and a basis $U_1,\dots,U_{d_2}$ of $\g_2$. Let $\langle \cdot,\cdot\rangle$ denote the inner product with respect to which these two bases become an orthonormal basis of $\g$.

Let $\g_2^*$ be the dual of $\g_2$. For any $\mu\in\g_2^*$, let $J_\mu$ be the endomorphism given by
\[
\langle J_\mu x,x' \rangle = \mu([x,x']),\quad x,x'\in\g_1,
\]
Note that $J_\mu$ is skew-adjoint and $-J_\mu^2=J_\mu^* J_\mu$ is self-adjoint and non-negative. The family of matrices $J_\mu$ together with the inner product $\langle\cdot,\cdot\rangle$ fully characterizes the Lie group $G$. The matrices $J_\mu$ admit a “simultaneous spectral decomposition” if $\mu$ ranges in a Zariski open subset of $\g_2^*$.

\begin{proposition}\label{prop:rotation}
There exist a non-empty, homogeneous Zariski-open subset $\g_{2,r}^*$ of $\g_2^*$, numbers $N\in\N\setminus\{0\}$, $r_0\in\N$, $\smash{\mathbf r=(r_1,\dots,r_N)\in(\N\setminus\{0\})^N}$, a function $\smash{\mu \mapsto \mathbf{b}^\mu = (b^\mu_1,\dots,b^\mu_N)\in [0,\infty)^N}$ on $\g_2^*$, functions $\mu\mapsto P_n^\mu$ on $\g_{2,r}^*$ with $P_n^\mu:\g_1\to\g_1$, $n\in\{1,\dots,N\}$, and a function $\mu\mapsto R_\mu\in O(d_1)$ on $\g_{2,r}^*$ such that
\begin{equation}\label{eq:spectral-decomp-0}
-J_\mu^2 = \sum_{n=1}^N (b_n^\mu)^2 P_n^\mu \quad \text{for all } \mu \in \g_{2,r}^*,
\end{equation}
with $P_n^\mu R_\mu=R_\mu P_n$, $J_\mu(\ran P_n^\mu)\subseteq \ran P_n^\mu$ for the range of $P_n^\mu$ for all $\mu \in \g_{2,r}^*$ and all $n\in\{0,\dots,N\}$, where $P_n$ denotes the projection from $\R^{d_1}=\R^{r_0}\oplus\R^{2r_1}\oplus \dots\oplus \R^{2r_N}$ onto the $n$-th layer, where
\begin{itemize}
\item[(i)] the functions $\mu \mapsto b^\mu_n$ are homogeneous of degree~$1$ and continuous on $\g_2^*$, real analytic on $\g_{2,r}^*$, and satisfy $b_n^\mu > 0$ for all $\mu \in \g_{2,r}^*$ and $n \in \{1,\dots,N\}$, and $b_n^\mu \neq b_{n'}^\mu$ if $n \neq n'$ for all $\mu \in \g_{2,r}^*$ and $n,n' \in \{1,\dots,N\}$,
\item[(ii)] the functions $\mu\mapsto P_n^\mu$ are (componentwise) real analytic on $\g_{2,r}^*$, homogeneous of degree $0$, and the maps $P_n^\mu$ are orthogonal projections on $\g_1$ of rank $2r_n$ for all $\mu \in \g_{2,r}^*$, with pairwise orthogonal ranges,
\item[(iii)] $\mu\mapsto R_\mu$ is a Borel measurable function on $\g_{2,r}^*$ which is homogeneous of degree~$0$, and there is a family $(U_\ell)_{\ell\in\N}$ of disjoint Euclidean open subsets $U_\ell\subseteq \g_{2,r}^*$ whose union is $\g_{2,r}^*$ up to a set of measure zero such that $\mu\mapsto R_\mu$ is (componentwise) real analytic on each $U_\ell$.
\end{itemize}
\end{proposition}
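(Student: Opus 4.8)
\emph{Proof plan.} Everything will be read off from the spectral theory of the single self-adjoint family $\g_2^*\ni\mu\mapsto-J_\mu^2=J_\mu^*J_\mu\in\End(\g_1)$, which is non-negative, depends polynomially on $\mu$, and is homogeneous of degree $2$, since $\mu\mapsto J_\mu$ is linear, so that $-J_{c\mu}^2=c^2(-J_\mu^2)$. As $J_\mu$ is skew-adjoint, one has $\ker(-J_\mu^2)=\ker J_\mu$ and every nonzero eigenvalue of $-J_\mu^2$ has even multiplicity. The first step is to fix the generic spectral type. For a prescribed pattern of eigenvalue multiplicities, the set of $\mu$ for which the eigenvalues of $-J_\mu^2$ realize it is semialgebraic, and for varying patterns these sets partition the irreducible variety $\g_2^*$; hence exactly one of them is Zariski-dense and therefore contains a non-empty Zariski-open subset, on which, after shrinking if necessary, the distinct eigenvalues stay separated, so that the pattern is genuinely locally constant there. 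Let $\g_{2,r}^*$ be this subset. It is a cone, because the multiplicity pattern is invariant under $\mu\mapsto c\mu$, and it does not contain $0$ (at $0$ the only eigenvalue is $0$, which is not generic since $[\g_1,\g_1]=\g_2\neq\{0\}$ forces $J_\mu\neq0$ for generic $\mu$). This fixes $r_0$ as the generic dimension of $\ker J_\mu$, $N\geq1$ as the generic number of distinct positive eigenvalues, and $\mathbf r=(r_1,\dots,r_N)$ as half of their generic multiplicities.

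On $\g_{2,r}^*$ I would produce the analytic data by the holomorphic functional calculus. Given $\mu_0\in\g_{2,r}^*$, enclose each of the $N+1$ distinct eigenvalues of $-J_{\mu_0}^2$ by a small circle in $\C$ disjoint from the rest of the spectrum; for $\mu$ near $\mu_0$ put $P_n^\mu=(2\pi i)^{-1}\oint(zI+J_\mu^2)^{-1}\,dz$, the contour running around the $n$-th of these eigenvalues. These Riesz projections are real-analytic in $\mu$ where defined, are orthogonal projections because $-J_\mu^2$ is self-adjoint, have locally constant and hence constant ranks $r_0$ and $2r_1,\dots,2r_N$, and have pairwise orthogonal ranges; ordering the positive eigenvalues decreasingly identifies the local pieces, so the $P_n^\mu$ glue to globally well-defined real-analytic functions on $\g_{2,r}^*$, homogeneous of degree $0$ since $c^2(-J_\mu^2)$ has the same eigenspaces as $-J_\mu^2$. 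Setting $b_n^\mu:=\bigl((2r_n)^{-1}\tr(P_n^\mu(-J_\mu^2))\bigr)^{1/2}$ gives a real-analytic positive function on $\g_{2,r}^*$ with $b_n^\mu\neq b_{n'}^\mu$ for $n\neq n'$, and defining $b_n^\mu$ on all of $\g_2^*$ as the square root of the $\bigl(2(r_1+\dots+r_{n-1})+1\bigr)$-th largest eigenvalue of $-J_\mu^2$ (sorted eigenvalues being continuous and homogeneous of degree $2$) produces the continuous, degree-$1$ extension of (i). The decomposition $-J_\mu^2=\sum_{n=1}^N(b_n^\mu)^2P_n^\mu$ on $\g_{2,r}^*$ is then the spectral theorem, using $\id_{\g_1}=\sum_{n=0}^NP_n^\mu$ with $P_0^\mu$ the projection onto $\ker J_\mu$; and since $J_\mu$ commutes with $-J_\mu^2$ it commutes with every $P_n^\mu$, which gives $J_\mu(\ran P_n^\mu)\subseteq\ran P_n^\mu$ for all $n\in\{0,\dots,N\}$.

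It remains to build $R_\mu$. For fixed $\mu\in\g_{2,r}^*$, any $R_\mu\in O(d_1)$ carrying the standard block decomposition $\R^{d_1}=\R^{r_0}\oplus\R^{2r_1}\oplus\dots\oplus\R^{2r_N}$ onto $\bigoplus_{n=0}^N\ran P_n^\mu$ satisfies $P_n^\mu R_\mu=R_\mu P_n$ for all $n$; such matrices exist pointwise, and near any $\mu_0$ one can be chosen real-analytically by applying analytic Gram--Schmidt to a suitable constant-rank subfamily of the columns of the real-analytic projections $P_0^\mu,\dots,P_N^\mu$. To globalize with the structure in (iii) I would invoke a semialgebraic cell decomposition of $\g_{2,r}^*\subseteq\R^{d_2}$, chosen compatibly with the dilations so that $R_\mu$ can be made homogeneous of degree $0$: finitely many of the cells are open in $\R^{d_2}$, these are pairwise disjoint, their complement has Lebesgue measure zero, and on each of them — being a contractible real-analytic cell — the associated principal bundle of adapted orthonormal frames is trivial, so $R_\mu$ may be taken real-analytic there; on the remaining null set a Borel choice exists by the Kuratowski--Ryll-Nardzewski selection theorem, since the admissible $R_\mu$ form a non-empty compact subset of $O(d_1)$ depending measurably on $\mu$. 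Taking the $U_\ell$ to be the open cells (and $\emptyset$ for the remaining indices) yields (iii).

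The step I expect to be the main obstacle is precisely the construction of $R_\mu$: here the purely spectral argument must be combined with semialgebraic geometry and a measurable selection, and one has to keep the cell decomposition compatible with the scaling $\mu\mapsto c\mu$ in order to obtain a $0$-homogeneous $R_\mu$. By comparison, isolating the generic multiplicity pattern on a Zariski-open cone and deriving the real-analyticity of $b_n^\mu$ and $P_n^\mu$ there is routine perturbation theory, and once these are in place the decomposition, the commutation relations, and the homogeneity statements follow immediately.
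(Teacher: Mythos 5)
Your approach reproduces, essentially correctly, the strategy of the works the paper cites for this proposition (\cite{Ni25R} and \cite[Lemma 5]{MaMue14b}); the paper itself gives no proof beyond the citation. The first two paragraphs — isolating the generic multiplicity pattern on a Zariski-open cone via semialgebraic considerations and irreducibility of $\g_2^*$, extracting $b_n^\mu$ and $P_n^\mu$ by Riesz projections, and deducing the spectral decomposition together with the commutation $J_\mu P_n^\mu = P_n^\mu J_\mu$ — are clean and correct; this part is indeed routine perturbation theory.

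The third paragraph, building $R_\mu$, is where the real work is, and two of your assertions there deserve to be tightened. First, homogeneity: a semialgebraic cell decomposition of $\g_{2,r}^*$ is not automatically conical, so the $R_\mu$ you produce cell by cell need not be degree-$0$ homogeneous; the standard fix is to decompose the spherical slice $S^{d_2-1}\cap\g_{2,r}^*$ (still semialgebraic), build $R_\mu$ analytically on the open cells there, and extend radially by $R_{c\mu}:=R_\mu$, taking the $U_\ell$ to be the open cones over the open spherical cells. You gesture at this ("chosen compatibly with the dilations") but do not carry it out. Second, analytic triviality: contractibility of a real-analytic cell gives a continuous, and after smoothing a $C^\infty$, section of the adapted frame bundle, but a \emph{real-analytic} section requires either a Grauert--Oka-type argument after complexifying the cell, or, more elementarily, the observation that a fine enough semialgebraic stratification lets you fix, on each open stratum, a choice of $2r_n$ columns of $P_n^\mu$ whose Gram determinant never vanishes there, so that a single Gram--Schmidt formula is real-analytic on the whole stratum. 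You allude to "analytic Gram--Schmidt applied to a suitable constant-rank subfamily of columns," which is the right idea, but the selection of that subfamily must itself be locked down stratum by stratum, and that refinement of the decomposition is asserted rather than arranged. Neither gap is serious, but as written these two steps are claimed rather than proved, and they are precisely the content of \cite[Lemma 5]{MaMue14b}.
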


\begin{proof}
See \cite{Ni25R}. See also \cite[Lemma 5]{MaMue14b}.
\end{proof}

As usual, we will identify the bases $X_1,\dots,X_{d_1}$ and $U_1,\dots,U_{d_2}$ with smooth left-invariant vector fields via Lie derivative. The sub-Laplacian
\[
L=-(X_1^2+\dots+X_{d_1}^2)
\]
and the vector $\textbf{U}=(-iU_1,\dots,-iU_{d_2})$ of differential operators admit a joint functional calculus \cite{Ma11}. For suitable functions $F:\R\times \R^{d_2}\to\C$, the operator $F(L,\mathbf U)$ possesses a (group) convolution kernel $\mathcal K_{F(L,\mathbf U)}$, that is,
\[
F(L,\mathbf U) f = f * \mathcal K_{F(L,\mathbf U)} \quad \text{for all } f\in \S(G).
\]
The convolution kernel $\mathcal K_{F(L,\mathbf U)}$ can be explicitly written down in terms of the Fourier transform and rescaled Laguerre functions.

For $\lambda>0$ and $m\in\N\setminus\{0\}$, let $\varphi_k^{(\lambda,m)}$ denote the $\lambda$-rescaled Laguerre function given by
\begin{equation}\label{eq:Laguerre}
\varphi_k^{(\lambda,m)}(z) = \lambda^m L_k^{m-1}\big(\tfrac 1 2 \lambda |z|^2\big) \,e^{-\frac 1 4 \lambda |z|^2},\quad z\in\R^{2m},
\end{equation}
where $L_k^{m-1}$ is the $k$-th Laguerre polynomial of type $m-1$. We use the notation of \cref{prop:rotation} in the next proposition.

\begin{proposition}\label{prop:conv-kernel}
If $F:\R\times \R^{d_2}\to\C$ is a Schwartz function, then $F(L,\mathbf U)$ possesses a convolution kernel $\mathcal K_{F(L,\mathbf U)}\in \S(G)$. For $x\in\g_1$ and $u\in\g_2^*$, we have
\begin{align}\label{eq:conv-kernel}
\mathcal K_{F(L,\mathbf U)}(x,u) = & (2\pi)^{-r_0-d_2} \int_{\g_{2,r}^*}   \int_{\R^{r_0}} \sum_{\mathbf{k}\in\N^N} F(|\tau|^2+\eigvp{k}{\mu},\mu) \notag \\
& \times \bigg[\prod_{n=1}^N \varphi_{k_n}^{(b_n^\mu,r_n)}(R_\mu^{-1} P_n^\mu x)\bigg] e^{i\langle \tau, R_\mu^{-1} P_0^\mu x\rangle} \, e^{i\langle \mu, u\rangle} \, d\tau  \, d\mu,
\end{align}
where $P_0^\mu = \id_{\g_1} - \left(P_1^\mu+\dots+P_N^\mu\right)$ and
\[
\eigvp{k}{\mu} = \eigv{k}{b^\mu}{r} = \sum_{n=1}^N \left(2k_n+r_n\right)b_n^\mu.
\]
\end{proposition}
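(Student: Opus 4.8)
The plan is to reduce the statement, via the partial Fourier transform along the center $\g_2$, to the normal form of \cref{prop:rotation} together with the classical Laguerre-function description of twisted (magnetic) Laplacians on Euclidean space. For $f\in\S(G)$ and $\mu\in\g_2^*$ put $f^\mu(x)=\int_{\g_2}f(x,u)\,e^{-i\langle\mu,u\rangle}\,du$. In exponential coordinates $U_j=\partial_{u_j}$ and $X_i=\partial_{x_i}+\tfrac12\sum_j\langle[x,e_i],U_j\rangle\partial_{u_j}$, so $-iU_j$ acts on the $\mu$-component as multiplication by $\langle\mu,U_j\rangle$, while $X_i$ acts as the twisted derivative $\partial_{x_i}+\tfrac i2(J_\mu x)_i$, using $\langle\mu,[x,e_i]\rangle=\langle J_\mu x,e_i\rangle$. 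Hence $L$ acts on the $\mu$-component as the twisted Laplacian $L^\mu=-\sum_i(\partial_{x_i}+\tfrac i2(J_\mu x)_i)^2$ and $F(L,\mathbf U)$ as $F(\cdot,\mu)(L^\mu)$; moreover group convolution becomes, on the $\mu$-component, the $\mu$-twisted convolution $(g\times_\mu h)(x)=\int g(x')h(x-x')\,e^{\frac i2\langle J_\mu x,x'\rangle}\,dx'$ on $\g_1$. So it suffices to identify, for a.e.\ $\mu\in\g_{2,r}^*$, the $\mu$-twisted convolution kernel $\mathcal K^\mu$ of $F(\cdot,\mu)(L^\mu)$, and then recover $\mathcal K_{F(L,\mathbf U)}$ by Fourier inversion in $u$, producing the outer factor $(2\pi)^{-d_2}\int_{\g_{2,r}^*}\dots\,e^{i\langle\mu,u\rangle}\,d\mu$.

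Next, for $\mu\in\g_{2,r}^*$ I would conjugate $L^\mu$ by the rotation $R_\mu$ of \cref{prop:rotation}. Since the ranges of $P_0^\mu,\dots,P_N^\mu$ are pairwise orthogonal, $J_\mu$-invariant and span $\g_1$, and since $-J_\mu^2$ equals $(b_n^\mu)^2$ on $\ran P_n^\mu$ (and $J_\mu=0$ on $\ran P_0^\mu$), the matrix $R_\mu^{-1}J_\mu R_\mu$ is block-diagonal on $\R^{d_1}=\R^{r_0}\oplus\R^{2r_1}\oplus\dots\oplus\R^{2r_N}$: it vanishes on the first block and equals $b_n^\mu$ times an orthogonal complex structure on $\R^{2r_n}$, which one may take to be the standard one after absorbing an intra-block rotation into $R_\mu$ (harmless, since the Laguerre functions $\varphi_k^{(\lambda,m)}$ of \eqref{eq:Laguerre} are radial). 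As $R_\mu\in O(d_1)$, the operator $L^\mu$ is thereby unitarily equivalent to a sum of pairwise commuting operators acting on separate tensor factors: the Euclidean Laplacian $-\Delta_{\R^{r_0}}$ on $L^2(\R^{r_0})$, and the rescaled twisted Laplacians $\mathcal L_{b_n^\mu}^{(r_n)}=-\sum_j(\partial_{x_j}+\tfrac i2 b_n^\mu(J_0 x)_j)^2$ on $L^2(\R^{2r_n})$, $n=1,\dots,N$; correspondingly the $\mu$-twisted convolution on $\R^{d_1}$ factors as ordinary convolution on $\R^{r_0}$ times the product of the $b_n^\mu$-twisted convolutions on the $\R^{2r_n}$.

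Now I would invoke the classical spectral theory of the twisted Laplacian (see e.g.\ \cite{Th93}): $\mathcal L_b^{(m)}$ has pure point spectrum $\{(2k+m)b:k\in\N\}$, with the projection onto the $k$-th eigenspace given by twisted convolution with a fixed normalizing multiple of the rescaled Laguerre function $\varphi_k^{(b,m)}$, while $-\Delta_{\R^{r_0}}$ is handled by ordinary Fourier inversion. Taking the joint functional calculus of the commuting pieces, $F(\cdot,\mu)(L^\mu)$ acquires in the rotated coordinates the twisted-convolution kernel obtained by summing over $\mathbf k\in\N^N$ and integrating over $\tau\in\R^{r_0}$ the quantity $F\big(|\tau|^2+\eigvp{k}{\mu},\mu\big)\,e^{i\langle\tau,x_0\rangle}\prod_{n=1}^N\varphi_{k_n}^{(b_n^\mu,r_n)}(x_n)$, where $x=(x_0,x_1,\dots,x_N)$. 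Undoing the rotation replaces $x_0$ by $R_\mu^{-1}P_0^\mu x$ and $x_n$ by $R_\mu^{-1}P_n^\mu x$, and Fourier inversion in $u$ then yields exactly \eqref{eq:conv-kernel}, with the various normalizing constants combining to $(2\pi)^{-r_0-d_2}$. Absolute convergence of all the sums and integrals — and the claim $\mathcal K_{F(L,\mathbf U)}\in\S(G)$ — would follow from the rapid decay of the Laguerre functions and the fact that $\eigvp{k}{\mu}$ grows at least linearly in $|\mathbf k|$ and in $|\mu|$ on $\g_{2,r}^*$, so that the Schwartz decay of $F$ both truncates the sum over $\mathbf k$ and controls the $\mu$-integral; alternatively one may simply quote that operators in the joint functional calculus of $L$ and $\mathbf U$ with Schwartz symbols have Schwartz convolution kernels.

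The step I expect to be the main obstacle is not conceptual but bookkeeping: the map $\mu\mapsto R_\mu$ is merely Borel measurable on $\g_{2,r}^*$ (real analytic only on the pieces $U_\ell$ of \cref{prop:rotation}), so one must check that the displayed formula is independent of the non-canonical choices in \cref{prop:rotation}, that $\mu\mapsto\mathcal K^\mu$ is measurable with adequate integrability, and that the interchanges of summation, integration and functional calculus are justified uniformly on compact subsets of $\g_{2,r}^*$. Everything else is a routine assembly of the partial Fourier transform, the normal form of \cref{prop:rotation}, and the textbook Laguerre description of twisted Laplacians.
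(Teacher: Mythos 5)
The paper's own proof of this proposition is a pure citation (to \cite{Ni25R} and to \cite[Proposition~6]{MaMue14b}), so there is no internal argument to compare against; what you outline is essentially the standard derivation those references carry out. The structure is exactly right: pass to the partial Fourier transform in the central variable, identify $L$ with the $\mu$-twisted Laplacian $L^\mu$ and group convolution with $\mu$-twisted convolution on $\g_1$; conjugate by $R_\mu$ to split $L^\mu$ into a free Laplacian on $\R^{r_0}$ and rescaled twisted Laplacians on the blocks $\R^{2r_n}$; apply the Laguerre/special-Hermite spectral decomposition of each block; then undo the rotation and the Fourier transform in $u$. Your observation that radiality of $\varphi_{k_n}^{(\lambda,m)}$ makes the intra-block choice of complex structure immaterial — and hence makes the displayed formula independent of the non-canonical Borel choice of $R_\mu$ from \cref{prop:rotation}, since only $|P_n^\mu x|$ enters — is the right way to dispatch the measurability worry you raise.

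The one step you assert without tracking is the constant: you claim the normalizations ``combine to $(2\pi)^{-r_0-d_2}$.'' If one uses the standard convention (Thangavelu) that the $k$-th special-Hermite projection on $\R^{2m}$ is $\lambda$-twisted convolution with $(2\pi)^{-m}\varphi_k^{(\lambda,m)}$, then each block $\R^{2r_n}$ contributes a factor $(2\pi)^{-r_n}$, which together with $(2\pi)^{-r_0}$ from Fourier inversion on the Euclidean block and $(2\pi)^{-d_2}$ from Fourier inversion in $u$ gives $(2\pi)^{-r_0-|r|_1-d_2}$ with $|r|_1=r_1+\cdots+r_N$. A sanity check with $F\equiv 1$ (so that $\mathcal K_{F(L,\mathbf U)}=\delta_0$) bears this out, as does the alternative constant $2^{|r|_1}/(2\pi)^{\dim G}$ appearing in the remark following the proposition. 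So you should actually carry the constant through your derivation rather than declare that it matches the display; your own computation will produce the extra factor $(2\pi)^{-|r|_1}$.
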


\begin{proof}
See \cite{Ni25R}, or \cite[Proposition 6]{MaMue14b}).
\end{proof}

\begin{remark}\label{rem:conv-kernel}
In \cite[Proposition 6]{MaMue14b}, the convolution kernel $\mathcal K_{F(L,\mathbf U)}$ is presented in the form
\begin{equation}\label{eq:conv-kernel-alt}
\mathcal K_{F(L,\mathbf U)}(x,u) = \frac{2^{|r|_1}}{(2\pi)^{\dim G}}
 \int_{\g_{2,r}^*} \int_{\g_1} V_F(\xi,\mu) \, e^{i \langle \xi, x \rangle} \, e^{i \langle \mu, u \rangle} \,d\xi \,d\mu,
\end{equation}
where
\begin{equation}\label{eq:V-tilde}
V_F(\xi,\mu) = \sum_{k \in \N^N} F(|P^\mu_0 \xi|^2+\eigvp{k}{\mu},\mu) \prod_{n=1}^N \mathcal L_{k_n}^{(r_n-1)}(|P^\mu_n \xi|^2 /b^\mu_n)
\end{equation}
with Laguerre functions $\mathcal L_{k_n}^{(r_n-1)}$ given by
\[
\mathcal L_{k_n}^{(r_n-1)}\big(|z|^2\big)=(-1)^{k_n}\varphi_{k_n}(2z),\quad  z\in\R^{2r_n}.
\]
However, similar to the proof of \cite{MaMue14b}, the formulas \eqref{eq:conv-kernel} and \eqref{eq:conv-kernel-alt} can be easily transformed into each other when using the fact that the matrix coefficients of the Schrödinger representation (which are the Fourier--Wigner transform of a pair of Hermite functions) remain essentially invariant under the Fourier transform, see \cite[Equation (1.90)]{Fo89}.
\end{remark}

\section{Reduction of \texorpdfstring{\cref{thm:multiplier}}{Theorem 1.2} to dyadic spectral multipliers}\label{sec:abstract}

To prove \cref{thm:multiplier}, we use the following general spectral multiplier result of \cite{ChOuSiYa16}, which allows us to reduce the spectral multiplier estimates of \cref{thm:multiplier} to estimates for spectral multipliers whose Fourier transforms are supported on dyadic scales. Very similar arguments have already been used in \cite[Section\ 5]{ChOu16} and \cite[Section\ 4]{Ni22}. The proof of the result in \cite{ChOuSiYa16} is based on arguments from Calderón--Zygmund theory.

We choose a dyadic decomposition $(\chi_\iota)_{\iota\in\Z}$ of $\R\setminus\{0\}$ such that $\sum_{\iota\in\Z} \chi_\iota(\lambda) = 1$ for all $\lambda\neq 0$, and
\begin{equation}\label{eq:dyadic}
\chi_\iota(\lambda)=\chi(\lambda/2^\iota)
\end{equation}
for some even bump function $\chi:\R\to [0,1]$ supported in $[-2,-1/2]\cup [1/2,2]$. Given a suitable multiplier $F:\R\to\C$, we use the notation
\begin{equation}\label{eq:dyadic-piece}
F^{(\iota)} := (\hat F \chi_\iota)^\vee\quad \text{for }\iota\in\Z,
\end{equation}
where $\widehat\cdot$ and $\cdot^\vee$ denote the Fourier transform and its inverse on $\R$, respectively.

\begin{definition}
Let $(X,\rho,\mu)$ be a metric measure space with distance~$\rho$ and measure $\mu$, and $L$ be a positive self-adjoint operator on $L^2(X)$. We say that $L$ possesses the \textit{finite propagation speed property} if
\[
(\cos(t\sqrt L)f,g)_{L^2(X)} = 0\quad\text{for all } |t|< \rho(U,V)
\]
whenever $f,g\in L^2(X)$ are supported in open subsets $U,V\subseteq X$, where
\[
\rho(U,V) := \inf\{\rho(u,v):u\in U,v\in V \}
\]
denotes the distance of the two sets $U,V\subseteq X$.
\end{definition}

In the following, given some metric space $(X,\rho)$ we denote the ball of radius $r>0$ centered at $x\in X$ by $B_r(x)$.

\begin{proposition}\cite[Proposition I.22]{ChOuSiYa16}\label{prop:COSY-original}
Let $(X,\rho,\mu)$ be a metric measure space of homogeneous type and $Q\ge 0$ such that
\[
\mu(B_{\lambda r}(x)) \le C \lambda^Q \mu(B_{r}(x)) \quad \text{for all } x\in X, r>0,\lambda\ge 1.
\]
Let $1\le p_0 < p <2$. Suppose that $L$ is a positive self-adjoint operator on $L^2(X)$ such that the following statements are satisfied:
\begin{enumerate}
\item[(i)] $L$ possesses the finite propagation speed property.
\item[(ii)] $L$ satisfies the \textit{Stein--Tomas restriction type condition} $(\mathrm{ST}^\infty_{p_0, 2})$ of \cite{ChOuSiYa16}, that is, for any $R>0$ and all bounded Borel functions $F:\R\to\C$ supported in $[0,R]$,
\begin{equation}\label{eq:ST-condition}
\|F(\sqrt L)(\mathbf{1}_{B_r(x)}f) \|_2 \leq C \mu(B_r(x))^{{\frac 1 2}-{\frac 1 {p_0}}} ( Rr )^{Q({\frac 1 {p_0}}-{\frac 1 2})}\|F\|_\infty \|f\|_{p_0}
\end{equation}
for all $x\in X$, all $r\geq 1/R$, and all $f\in L^{p_0}(X)$.
\item[(iii)] There is some $\beta > Q/2$ such that
\[
\sup_{t>0}\|F(t\sqrt L)\|_{p\to p}\leq C\|F\|_{L^\infty_\beta}
\]
for all even bounded Borel functions $F:\R\to\C$ such that $\supp F \subseteq [-1, 1]$.
\end{enumerate}
Suppose that $F:\R\to\C$ is an even bounded Borel function and that there is a bounded sequence $(\alpha(\iota))_{\iota \in\Z}$ with $\sum_{\iota \ge 0}(\iota+1) \alpha(\iota)<\infty$ such that
\begin{equation}\label{eq:cond-1}
\norm{(F\chi_i)^{(j)}(\sqrt L)}_{p\to p} \le \alpha(i+j) \quad\text{for all } i,j\in \Z.
\end{equation}
Then the operator $F(\sqrt L)$ is of weak-type $(p,p)$.
\end{proposition}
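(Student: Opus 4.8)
Following \cite{ChOuSiYa16}, the plan is to obtain the weak-type $(p,p)$ bound by a Calderón--Zygmund argument adapted to the exponent $p$ and to the doubling space $(X,\rho,\mu)$, in the spirit of the theory of singular integrals with non-smooth kernels. Fix $f\in L^p(X)$, normalise $\norm{f}_{L^p}=1$, and fix a height $\alpha>0$. One first performs the $L^p$-Calderón--Zygmund decomposition of $f$ at level $\alpha$: starting from the level set $\Omega=\{M(|f|^p)>\alpha^p\}$ of the Hardy--Littlewood maximal function, which satisfies $\mu(\Omega)\lesssim\alpha^{-p}$, and a Whitney covering of $\Omega$, one writes $f=g+\sum_k b_k$ with $\norm{g}_\infty\lesssim\alpha$ and $\norm{g}_{L^p}\lesssim1$, and with the $b_k$ supported in Whitney balls $B_k=B_{r_k}(x_k)$, of vanishing integral, satisfying $\norm{b_k}_{L^p}^p\lesssim\alpha^p\mu(B_k)$, $\sum_k\mathbf{1}_{B_k}\lesssim1$, and $\sum_k\mu(B_k)\lesssim\mu(\Omega)\lesssim\alpha^{-p}$.

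The good part is disposed of by $L^2$-theory: since $F$ is bounded, $\norm{F(\sqrt L)}_{2\to2}\le\norm{F}_\infty$, so by Chebyshev's inequality and interpolation
\[
\mu\big(\{|F(\sqrt L)g|>\alpha/2\}\big)\lesssim\alpha^{-2}\norm{g}_{L^2}^2\le\alpha^{-2}\norm{g}_\infty^{2-p}\norm{g}_{L^p}^p\lesssim\alpha^{-p}.
\]
The dilated Whitney balls form the exceptional set $E=\bigcup_k\kappa B_k$, with $\kappa$ a fixed constant depending only on the doubling constant, and $\mu(E)\lesssim\sum_k\mu(B_k)\lesssim\alpha^{-p}$. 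It remains to estimate $\mu(\{x\notin E:|F(\sqrt L)b(x)|>\alpha/2\})$, for which one uses Chebyshev's inequality once more and bounds the contribution of each $b_k$ on the dyadic annuli about $B_k$.

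For the bad part outside $E$, decompose $F(\sqrt L)=\sum_{i,j\in\Z}(F\chi_i)^{(j)}(\sqrt L)$, combining the dyadic spectral decomposition \eqref{eq:dyadic} with the Fourier-side decomposition \eqref{eq:dyadic-piece} of each $F\chi_i$; by condition~(i) (finite propagation speed), since $F$ and $\chi$ are even, the convolution kernel of $(F\chi_i)^{(j)}(\sqrt L)$ is supported in $\{\rho(x,y)\lesssim2^j\}$. For a fixed $B_k$, the pieces with $2^j\lesssim r_k$ carry $b_k$ into a fixed dilate of $B_k$, hence into $E$, and contribute nothing. The remaining pieces ($2^j\gtrsim r_k$) are controlled by playing three ingredients against each other: the hypothesis \eqref{eq:cond-1}, which gives $\norm{(F\chi_i)^{(j)}(\sqrt L)}_{p\to p}\le\alpha(i+j)$ and hence decay in the complexity $\iota=i+j$; the restriction-type condition~(ii), \eqref{eq:ST-condition}, which converts the $L^{p_0}$-mass of $b_k$ on $B_k$ into $L^2$-mass of its image with the sharp $Q$-dependent scaling in the spectral scale $2^i$ and the radius $r_k$, after which Hölder's inequality and the doubling property pass back to $L^p$ on the annulus; and the Hörmander-type condition~(iii), which bounds, uniformly in the spectral scale $i$, the combined smooth piece $\sum_{j:\,i+j<0}(F\chi_i)^{(j)}(\sqrt L)$ — a well-behaved multiplier of $2^{-i}\sqrt L$ whose rescaled symbol lies in $L^\infty_\beta$ with $\beta>Q/2$ — this being needed because the complexity weights $\alpha(\iota)$ for $\iota<0$ are merely bounded rather than summable. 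Summing the resulting estimates over $k$, over the annuli, and over $(i,j)$, and invoking $\sum_{\iota\ge0}(\iota+1)\alpha(\iota)<\infty$, yields $\mu(\{x\notin E:|F(\sqrt L)b(x)|>\alpha/2\})\lesssim\alpha^{-p}$, which completes the proof.

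The main obstacle is the bookkeeping of the double sum over $(i,j)$. At a fixed value of the complexity $\iota=i+j$ there are on the order of $\iota$ spectral scales $i$ whose pieces reach a given annulus about $B_k$, which is exactly why the linear weight $(\iota+1)$ is needed in the summability hypothesis; making the geometric series in the annulus index converge also forces one to exploit the vanishing integral of the $b_k$, and — since no heat-kernel or pointwise kernel bounds are assumed — to substitute for the missing kernel smoothness an $L^2$ off-diagonal mechanism built from finite propagation speed together with condition~(ii). A secondary technical point is that $(F\chi_i)^{(j)}$ is not compactly supported in the spectral variable (the operation \eqref{eq:dyadic-piece} smears $F\chi_i$), so condition~(ii), which is stated for spectrally localised multipliers, must be applied after inserting a spectral cutoff at scale $2^i$ or via an almost-orthogonality argument across the spectral scales; all of this is carried out in full in \cite{ChOuSiYa16}.
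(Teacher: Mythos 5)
The paper does not prove this proposition. It is imported verbatim from \cite[Proposition I.22]{ChOuSiYa16}, and the only in-paper justification is the subsequent remark noting that the condition $(\mathrm{ST}^\infty_{p_0,2})$ stated here is equivalent, by \cite[Proposition I.3]{ChOuSiYa16}, to the condition $(\mathrm{E}_{p_0,2})$ used in that reference, together with a comment reconciling the indexing of the dyadic decompositions. So your proposal is not being weighed against a proof in the paper; there is none. What you have written is a high-level reconstruction of the argument in the cited source, and you say so explicitly at the end.

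As a sketch of the Chen--Ouhabaz--Sikora--Yan mechanism your account is broadly in the right shape: an $L^p$-Calder\'on--Zygmund decomposition at height $\alpha$, disposal of the good part by $L^2$-boundedness and interpolation, an exceptional set from dilated Whitney balls, finite propagation speed to kill the pieces with $2^j\lesssim r_k$ outside $E$, the restriction-type condition (ii) to supply $L^{p_0}\to L^2$ off-diagonal control in place of pointwise kernel bounds, the Mikhlin--H\"ormander condition (iii) to absorb the low-complexity block $\iota<0$ where $\alpha(\iota)$ is merely bounded, and the weight $(\iota+1)$ in the summability hypothesis to pay for the multiplicity of pairs $(i,j)$ with $i+j=\iota$ that reach a fixed annulus. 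You also correctly flag that $(F\chi_i)^{(j)}$ is only approximately spectrally localized, which requires either reinserting a cutoff or an almost-orthogonality argument before condition~(ii) can be applied as stated. The one point I would not let pass without checking against the source is your claim that one exploits ``the vanishing integral of the $b_k$.'' In the non-kernel Calder\'on--Zygmund theory in the Duong--McIntosh/Blunck--Kunstmann line that \cite{ChOuSiYa16} belongs to, the cancellation is typically supplied by the operator rather than the data: one does not take mean-zero bad parts and pair them against kernel smoothness, but instead replaces $b_k$ by $(I-A_{r_k})b_k$ for an $L$-adapted approximation of the identity, or, as here, lets the dyadic decomposition of $F$ together with the hypothesis \eqref{eq:cond-1} and the restriction estimate do that work. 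Attributing the convergence of the annular sum to the vanishing mean of the $b_k$ is, at minimum, a misattribution of mechanism, and in a fuller write-up it would be a step you could not actually carry out as described without the pointwise kernel regularity that the hypotheses deliberately avoid assuming.
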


\begin{remark}
Actually, in \cite{ChOuSiYa16}, Proposition~I.22 requires the condition $(\mathrm E_{p_0,2})$ in place of the Stein--Tomas type restriction condition $(\mathrm{ST}^\infty_{p_0, 2})$. However, both conditions are equivalent by Proposition~I.3 of the same paper. Moreover, note that the notation of our decomposition indexed by $i$ and $j$ differs slightly from that of \cite{ChOuSiYa16} since $(F\eta_i)^{(j)}=(F\chi_{-i})^{(j)}$, where $(\eta_i)_{i\in\Z}$ is the dyadic decomposition from \cite[Eq.\ (I.3.3)]{ChOuSiYa16}. The somewhat artificial requirement that $F$ shall be an even function will become apparent at the beginning of the proof of \cref{prop:reduced}.
\end{remark}

We apply \cref{prop:COSY-original} in the setting where $X=G$ is a two-step stratified Lie group and
\begin{equation}\label{def:sub-Laplacian-6}
L=-(X_1^2+\dots+X_{d_1}^2)
\end{equation}
is the sub-Laplacian associated with the basis $X_1,\dots,X_{d_1}$ of the first layer of the stratification $\g=\g_1\oplus \g_2$ of $G$. The measure $\mu$ in \cref{prop:COSY-original} will be the Lebesgue measure $|\cdot|$ on $G$, where we again identify $G\cong \g$ via the exponential map and $\g\cong \R^d$ by means of the bases $X_1,\dots,X_{d_1}$ and $U_1,\dots,U_{d_2}$. The distance $\rho$ is given by the Carnot--Carathéodory distance $d_{\mathrm{CC}}$ associated with the vector fields $X_1,\dots,X_{d_1}$. That is, for $g,h\in G$, the distance $d_{\mathrm{CC}}(g,h)$ is given by the infimum over all lengths of horizontal curves $\gamma:[0,1]\to G$ joining $g$ with $h$ (see for instance Section III.4 of \cite{VaSaCo92}). By the Chow--Rashevskii theorem \cite[Proposition~III.4.1]{VaSaCo92}, $d_{\mathrm{CC}}$ is indeed a metric on $M$, which induces the (Euclidean) topology of $G$.

It is well known that the Carnot--Carathéodory distance $d_{\mathrm{CC}}$ renders the measure space $(G,|\cdot|)$ a space of homogeneous type. This can be seen as follows: First, note that, since $X_1,\dots,X_{d_1}$ are left-invariant vector fields, $d_{\mathrm{CC}}$ is a left-invariant metric, meaning that
\begin{equation}\label{eq:cc-translation}
d_{\mathrm{CC}}(ag,ah) = d_{\mathrm{CC}}(g,h)\quad\text{for all } a,g,h\in G. 
\end{equation}
Moreover, by Lemma 1.4 of \cite{FoSt82},
\begin{equation}\label{eq:cc-equivalence}
d_{\mathrm{CC}}(g,h) \sim \norm{g^{-1}h} \quad \text{for all } g,h\in G, 
\end{equation}
where $\Vert\cdot\Vert$ is given by
\[
\norm{ (x,u) } = (|x|^4+|u|^2)^{1/4},\quad (x,u)\in G,
\]
which is a homogeneous norm in the sense of \cite[p.\ 8]{FoSt82} with respect to the family of dilations $(\delta_R)_{R>0}$ given by 
\begin{equation}\label{def:dilation}
\delta_R(x,u)=(Rx,R^2u).
\end{equation}
In combination with \eqref{eq:cc-translation}, the volume of a given ball $B_R^{d_{\mathrm{CC}}}(x,u)$ of radius $R>0$ centered at $(x,u)\in G$ can be estimated by
\begin{equation}\label{eq:cc-volume}
|B_R^{d_{\mathrm{CC}}}(x,u)| = R^{Q} |B_1^{d_{\mathrm{CC}}}(0)|.
\end{equation}
Hence, the metric measure space $(X,\rho,\mu)=(G,d_{\mathrm{CC}},|\cdot|)$ is indeed a space of homogeneous type, with homogeneous dimension $Q=d_1+2d_2$.

Moreover, the sub-Laplacian $L$ possesses the finite propagation speed property with respect to the Carnot--Carathéodory distance $d_{\mathrm{CC}}$. For a proof, see \cite{Me84}, or alternatively \cite[Corollary 6.3]{Mue04}. 

\begin{lemma}\label{lem:finite-prop}
Let $G$ be a two-step stratified Lie group and $L$ be a sub-Laplacian as in \cref{def:sub-Laplacian-6}.
Then $L$ possesses the finite propagation speed property.
\end{lemma}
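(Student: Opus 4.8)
The plan is to run the classical energy (finite speed of propagation) argument for the wave equation $\partial_s^2 u+Lu=0$. By density in $L^2(G)$ it is enough to prove $(\cos(t\sqrt L)f,g)_{L^2(G)}=0$ for $f,g\in C_c^\infty(G)$; say $f,g$ are supported in the open sets $U,V$, and, since $\cos$ is even, assume $t\ge 0$. For such $f$ the function $u(s):=\cos(s\sqrt L)f$ lies in $\mathcal D(L^k)$ for every $k\in\N$ and every $s\in\R$, so, $L$ being hypoelliptic (the fields $X_1,\dots,X_{d_1}$ satisfy Hörmander's bracket condition since $\g_1$ generates $\g$), $u$ is smooth on $\R\times G$ and solves $\partial_s^2 u=-Lu=\sum_i X_i^2 u$ classically, with $u(0,\cdot)=f$ and $\partial_s u(0,\cdot)=0$. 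Since $\supp f$ is compact and $d_{\mathrm{CC}}(\supp f,V)\ge d_{\mathrm{CC}}(U,V)>t$, it suffices to show that $u(t,y_0)=0$ for every $y_0\in V$ whenever $0\le t<d_{\mathrm{CC}}(\supp f,V)$; then $(\cos(t\sqrt L)f,g)_{L^2(G)}=\int_V u(t,\cdot)\,\overline g=0$.

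Fix $y_0\in V$ and a radius $t_0$ with $t<t_0<d_{\mathrm{CC}}(\supp f,y_0)$ — such $t_0$ exists because $d_{\mathrm{CC}}(\supp f,y_0)\ge d_{\mathrm{CC}}(\supp f,V)>t$ — and set $\phi(x):=d_{\mathrm{CC}}(x,y_0)$. The geometric ingredient is the \emph{eikonal inequality} $\sum_{i=1}^{d_1}|X_i\phi(x)|^2\le 1$ for almost every $x$: the function $\phi$ is $1$-Lipschitz for $d_{\mathrm{CC}}$, hence differentiable a.e., and at a point $x$ of differentiability, for every unit vector $a\in\R^{d_1}$ the integral curve of $\sum_i a_i X_i$ through $x$ is a unit-speed horizontal curve, whence $|(\sum_i a_i X_i)\phi(x)|\le 1$; optimizing over $a$ gives the inequality. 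Now put
\[
e(s,x)=\tfrac12\Big(|\partial_s u(s,x)|^2+\sum_{i=1}^{d_1}|X_i u(s,x)|^2\Big),\qquad E(s)=\int_{\{\phi<t_0-s\}}e(s,x)\,dx,\quad 0\le s<t_0.
\]
Because the Carnot--Carathéodory ball $\{\phi<t_0\}$ is disjoint from $\supp f$ and $\partial_s u(0,\cdot)=0$, we have $E(0)=0$.

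The heart of the matter is the inequality $E'(s)\le 0$. Using $\partial_s^2 u=\sum_i X_i^2 u$ one gets $\partial_s e=\sum_i X_i\big((\partial_s u)(X_i u)\big)$; since every left-invariant vector field $X_i$ is divergence-free for the Haar (equivalently, Lebesgue) measure on the unimodular group $G$, integrating by parts over $\{\phi<t_0-s\}$ turns $\int\partial_s e$ into a flux across the level set $\{\phi=t_0-s\}$, and combining this with the inward normal speed of that level set and applying Cauchy--Schwarz together with $\sum_i|X_i\phi|^2\le 1$ shows $E'(s)\le 0$. Hence $E\equiv 0$ on $[0,t_0)$, so $\partial_s u$ and each $X_i u$ vanish on $\{\phi<t_0-s\}$, and together with $u(0,\cdot)=f\equiv 0$ there this forces $u(s,x)=0$ whenever $\phi(x)<t_0-s$. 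Taking $x=y_0$ yields $u(s,y_0)=0$ for $0\le s<t_0$, in particular $u(t,y_0)=0$; as $y_0\in V$ was arbitrary, the proof is complete.

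The single delicate point — and the main obstacle — is that $\phi=d_{\mathrm{CC}}(\cdot,y_0)$ is merely Lipschitz, so $\{\phi=t_0-s\}$ need not be a smooth hypersurface and the divergence-theorem step is not literally valid as stated. One handles this in the customary way: replace $\phi$ by a smooth regularization $\phi_\varepsilon$ arranged so that $\sum_i|X_i\phi_\varepsilon|^2\le 1+\varepsilon$ and $\phi_\varepsilon\to\phi$ locally uniformly, carry out the argument at propagation speed $\sqrt{1+\varepsilon}$, and let $\varepsilon\downarrow 0$; or, alternatively, compute $E'(s)$ through the coarea formula for the Lipschitz function $\phi$ (valid for a.e.\ $s$) and conclude with Gronwall's inequality. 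Either route is routine. One could also avoid the energy computation entirely by invoking the abstract equivalence of the finite propagation speed property with the Davies--Gaffney $L^2$ heat-semigroup estimate relative to $d_{\mathrm{CC}}$, which for sub-Laplacians is classical.
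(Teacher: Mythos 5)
The paper does not prove this lemma at all: it simply cites Melrose \cite{Me84} and M\"uller \cite[Corollary 6.3]{Mue04} for the finite propagation speed property. Your proposal is therefore by construction a ``different route,'' namely the classical Friedrichs energy method for the sub-elliptic wave equation, which is in fact the standard approach underlying those references. The structure of your argument is sound, and the reduction to showing $u(t,y_0)=0$ for $y_0\in V$ and the choice of backward light-cone $\{\phi<t_0-s\}$ are exactly right.

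One step, however, is imprecise as stated and would not survive scrutiny: the derivation of the eikonal inequality $\sum_i|X_i\phi|^2\le 1$. You argue that $\phi=d_{\mathrm{CC}}(\cdot,y_0)$ is $1$-Lipschitz for $d_{\mathrm{CC}}$, ``hence differentiable a.e.'' This does not follow from the Euclidean Rademacher theorem, because on a two-step stratified group the Carnot--Carath\'eodory distance is only locally $1/2$-H\"older in the central (vertical) directions, so $\phi$ is not Euclidean-Lipschitz and ordinary a.e.\ differentiability is not automatic. The correct invocation is Pansu's Rademacher theorem for Carnot groups: a $d_{\mathrm{CC}}$-Lipschitz function $\phi:G\to\R$ is Pansu-differentiable at a.e.\ point, and at such points the horizontal gradient $(X_1\phi,\dots,X_{d_1}\phi)$ exists with $\sum_i|X_i\phi|^2\le\mathrm{Lip}(\phi)^2=1$; equivalently, one may cite the known $W^{1,\infty}_{\mathrm{horiz},\mathrm{loc}}$ regularity of the CC distance in Carnot groups. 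Your pointwise ``unit-speed curve'' heuristic then becomes a rigorous computation at the points of Pansu differentiability. With that repair, and carrying out the regularization of $\phi$ (or the coarea/Gronwall variant) that you already sketch, the argument is complete. The Davies--Gaffney route you mention as an alternative is also legitimate and is essentially the content of the M\"uller reference the paper cites.
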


As a consequence of \cref{prop:COSY-original}, we obtain:

\begin{corollary}\label{cor:reduction}
Let $G$ be a two-step stratified Lie group and $L$ be a sub-Laplacian as in \cref{def:sub-Laplacian-6}. Let $p_{*}\in[1,2]$ and $s>1/2$. Suppose that for all $1\le p\le p_{*}$ there exists some $\varepsilon>0$ such that
\begin{equation}\label{eq:reduced}
\Vert F^{(\iota)}(\sqrt L) \Vert_{p\to p} \le C_{p,s} 2^{-\varepsilon\iota} \norm{F}_{L^2_s}
\quad \text{for all } \iota \in\N
\end{equation}
and all even bounded Borel functions $F\in L^2_s(\R)$ supported in $[-2,-1/2]\cup[1/2,2]$. Then the statements (\ref{thm:part-1}) and (\ref{thm:part-2}) of \cref{thm:multiplier} hold for all $1\le p\le p_{*}$.
\end{corollary}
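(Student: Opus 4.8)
The plan is to deduce the corollary from \cref{prop:COSY-original}, applied to the metric measure space $(G,d_{\mathrm{CC}},|\cdot|)$ — which is of homogeneous type with homogeneous dimension $Q=d_1+2d_2$ — and to the sub-Laplacian $L$, and then to upgrade the resulting weak-type bounds to strong-type bounds by interpolation. I would first carry out the routine reductions. Given $F$ with $\|F\|_{L^2_{\sigma,\sloc}}<\infty$ for some $\sigma>d(1/p-1/2)$, put $\tilde F(\lambda)=F(\lambda^2)$, so that $F(L)=\tilde F(\sqrt L)$ and $\|\tilde F\|_{L^2_{\sigma,\sloc}}\sim\|F\|_{L^2_{\sigma,\sloc}}$ (the substitution $\lambda\mapsto\lambda^2$ being a diffeomorphism away from the origin), and replace $\tilde F$ by its even extension, which changes neither the operator (since $\sqrt L\ge 0$) nor, up to a constant, the norm. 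For part (\ref{thm:part-2}) I would note that $\lambda\mapsto(1-t\lambda)^\delta_+$ lies in $L^2_{\sigma,\sloc}$ with norm independent of $t$ whenever $\sigma<\delta+1/2$, and that $\|\,\cdot\,\|_{p\to p}$ is invariant under the dilations $\delta_R$; hence part (\ref{thm:part-2}) for $p>1$ is a special case of part (\ref{thm:part-1}), and only the endpoint $p=1$ of part (\ref{thm:part-2}) needs a separate argument.

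Next I would verify the hypotheses of \cref{prop:COSY-original} with $p_0=1$. Hypothesis (i) is \cref{lem:finite-prop}. For hypothesis (ii), Young's inequality gives $\|F(\sqrt L)(\mathbf 1_{B_r(x)}f)\|_2\le\|f\|_1\,\|\mathcal K_{F(\sqrt L)}\|_{L^2(G)}$, and for $F$ supported in $[0,R]$ one has the Plancherel-type bound $\|\mathcal K_{F(\sqrt L)}\|_{L^2(G)}\lesssim R^{Q/2}\|F\|_\infty$, a standard consequence of the homogeneity of $L$ together with its Gaussian heat kernel estimates; since $|B_r(x)|^{-1/2}(Rr)^{Q/2}\sim R^{Q/2}$, this is exactly the condition $(\mathrm{ST}^\infty_{1,2})$. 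For hypothesis (iii), dilation invariance gives $\|F(t\sqrt L)\|_{p\to p}=\|F(\sqrt L)\|_{p\to p}$, so the assertion reduces to $\|F(\sqrt L)\|_{p\to p}\lesssim\|F\|_{L^\infty_\beta}$ for $F$ supported in $[-1,1]$ and some $\beta>Q/2$, which follows from the Mikhlin--Hörmander theorem of Christ and of Mauceri--Meda together with the embedding $\|F\|_{L^2_{\beta,\sloc}}\lesssim\|F\|_{L^\infty_\beta}$.

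It then remains to verify condition \cref{eq:cond-1}. Fix $p\in(1,2)$ with $\sigma>d(1/p-1/2)$; we take $s\in(1/2,\sigma)$, which is the relevant regime ($s$ close to $1/2$ in the application, while $\sigma>d(1/p-1/2)>1$ throughout the admissible range since $d_1>d_2$ on a Métivier group). Put $G_i(\lambda)=(F\chi_i)(2^i\lambda)$, an even function supported in $[-2,-1/2]\cup[1/2,2]$. A computation with the Fourier transform yields $(F\chi_i)^{(j)}=G_i^{(i+j)}(2^{-i}\,\cdot\,)$, so by the homogeneity of $L$ and the consequent dilation invariance of the $L^p$ operator norm, $\|(F\chi_i)^{(j)}(\sqrt L)\|_{p\to p}=\|G_i^{(i+j)}(\sqrt L)\|_{p\to p}$. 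For $i+j\ge 0$, applying \cref{eq:reduced} to $G_i$ with index $i+j$ and using $\|G_i\|_{L^2_s}=\|F(2^i\,\cdot\,)\chi\|_{L^2_s}\le C\|F(2^i\,\cdot\,)\chi\|_{L^2_\sigma}\le C\|F\|_{L^2_{\sigma,\sloc}}$ (uniformly in $i$, as $s<\sigma$ and the functions involved are compactly supported) gives the required bound with $\alpha(\iota)=C_{p,s}2^{-\varepsilon\iota}\|F\|_{L^2_{\sigma,\sloc}}$ for $\iota\ge 0$. For $i+j<0$ the function $G_i^{(i+j)}$ is band-limited at a scale $\le 1$, so after a harmless spatial cut-off and rescaling, hypothesis (iii) bounds $\|G_i^{(i+j)}(\sqrt L)\|_{p\to p}\lesssim\|G_i^{(i+j)}\|_{L^\infty_\beta}\lesssim\|G_i\|_\infty\lesssim\|F\|_{L^2_{\sigma,\sloc}}$, so $\alpha(\iota)=C\|F\|_{L^2_{\sigma,\sloc}}$ for $\iota<0$ works. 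The sequence $(\alpha(\iota))_{\iota\in\Z}$ is then bounded with $\sum_{\iota\ge 0}(\iota+1)\alpha(\iota)<\infty$, and \cref{prop:COSY-original} gives that $F(L)=\tilde F(\sqrt L)$ is of weak type $(p,p)$ with norm $\lesssim\|F\|_{L^2_{\sigma,\sloc}}$ for every such $p$. Marcinkiewicz interpolation between this bound at a suitable $p_1\in(1,p)$ with $d(1/p_1-1/2)<\sigma$ and the trivial $L^2$-bound from the spectral theorem yields the strong-type bound, proving part (\ref{thm:part-1}), hence part (\ref{thm:part-2}) for $p>1$, for all $1<p\le p_*$.

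For $p=1$ in part (\ref{thm:part-2}) I would argue directly: the multiplier $(1-t\lambda)^\delta_+$ is compactly supported, so one decomposes it dyadically in frequency, bounds the low-frequency part on $L^1$ via the compactly supported kernel it produces (finite propagation speed), and bounds each remaining dyadic piece using \cref{eq:reduced} with $p=1$ and some $s\in(1/2,\delta+1/2)$, summing the geometric series in $\iota$. The step I expect to be the main obstacle is the verification of condition \cref{eq:cond-1}: one must arrange the two-parameter decomposition so that, after rescaling each annular-and-frequency-localized piece $(F\chi_i)^{(j)}$ to unit scale, the bound depends only on the single index $i+j$ — which hinges on absorbing the $i$-dependent factor $\|G_i\|_{L^2_s}$ into the uniform quantity $\|F\|_{L^2_{\sigma,\sloc}}$ (exploiting that $s$ is taken strictly below the sharp regularity exponent) and on cleanly separating off the ``oversmoothed'' range $i+j<0$, which is disposed of by the Mikhlin--Hörmander hypothesis rather than by \cref{eq:reduced}.
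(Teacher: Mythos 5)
Your overall strategy is the right one and matches the paper's (the paper's proof is a reference to Corollary 6.2 of \cite{Ni24}, whose argument is precisely to feed \cref{prop:COSY-original} the estimate \cref{eq:reduced} after the dilation-rescaling manipulation $(F\chi_i)^{(j)}=G_i^{(i+j)}(2^{-i}\cdot)$ that you describe, verify the remaining hypotheses by homogeneity, Plancherel, and Mikhlin--Hörmander, and then interpolate up to strong type). The verification of the Stein--Tomas condition with $p_0=1$, the treatment of hypothesis (iii) by dilation invariance, the rescaling argument for \cref{eq:cond-1} with $\|G_i\|_{L^2_s}$ absorbed into $\|F\|_{L^2_{\sigma,\sloc}}$, and the Marcinkiewicz interpolation at the end are all sound.

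There is a gap in the $p=1$ argument for Bochner--Riesz. You propose to decompose $(1-\lambda^2)^\delta_+$ dyadically in frequency and then apply \cref{eq:reduced} to ``each remaining dyadic piece,'' but \cref{eq:reduced} is only assumed for $F$ supported in $[-2,-1/2]\cup[1/2,2]$, whereas $(1-\lambda^2)^\delta_+$ is supported in $[-1,1]$. The frequency pieces $F^{(\iota)}$ are not of the admissible form, so the hypothesis cannot be invoked directly. The missing step is a prior localization in \emph{physical} space: write $(1-\lambda^2)^\delta_+=\tilde F\psi+\tilde F(1-\psi)$ with $\psi$ a bump supported in the annulus $[1/8,8]$ equal to $1$ on $[1/2,2]$, so that $\tilde F(1-\psi)$ is smooth and compactly supported (hence has a Schwartz, in particular $L^1$, convolution kernel), while $\tilde F\psi$ is supported in the required annulus and satisfies $\|\tilde F\psi\|_{L^2_s}<\infty$ for $s<\delta+\tfrac12$; only then does the $\iota$-sum over $\|(\tilde F\psi)^{(\iota)}(\sqrt L)\|_{1\to1}\lesssim 2^{-\varepsilon\iota}\|\tilde F\psi\|_{L^2_s}$ apply, and one must separately control the $\iota<0$ tail (finite propagation speed plus Plancherel suffice). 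This is exactly the $\psi$-localization step that appears in Step (2) of the proof of \cref{prop:reduced}; it is needed here too and should not be elided.

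A smaller point: \cref{prop:COSY-original} as quoted is a qualitative weak-type statement, but your interpolation step uses the quantitative weak-type bound $\lesssim\|F\|_{L^2_{\sigma,\sloc}}$ produced by inserting $\alpha(\iota)=C2^{-\varepsilon\iota}\|F\|_{L^2_{\sigma,\sloc}}$. That quantitative version is what the proof of Proposition I.22 in \cite{ChOuSiYa16} actually yields, and it is worth saying so explicitly, since otherwise the interpolation conclusion would not scale correctly in $F$.
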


\begin{remark}
The assumption $1\le p\le p_{d_1,d_2}$ of \cref{thm:multiplier} automatically implies that $s>1/2$ if $s>d\left(1/p-1/2\right)$ since $p_{d_1,d_2}\le p_{d_2}=2(d_2+1)/(d_2+3)$ and thus
\[
d\bigg(\frac 1 p- \frac 1 2\bigg)\ge d_2\bigg(\frac{(d_2+3)}{2(d_2+1)}-\frac 1 2\bigg) = \frac{d_2}{d_2+1} \ge \frac 1 2.
\]
However, in \cref{cor:reduction}, we only require $1\le p\le p_{*}$ for some $p_{*}\in [1,2]$, which is why we additionally assume $s>1/2$ to make sure that $\|F|_{(0,\infty)}\|_\infty \lesssim \|F\|_{L^2_{s,\sloc}}$.
\end{remark}

\begin{proof}
See the proof of Corollary 6.2 in \cite{Ni24}.
\end{proof}

As a result, to prove \cref{thm:multiplier}, it suffices to show for $s>d\left(1/p-1/2\right)$ that
\[
\Vert F^{(\iota)}(\sqrt L) \Vert_{p\to p} \le C_{p,s} 2^{-\varepsilon\iota} \norm{F}_{L^2_s}
\quad \text{for all } \iota \in\N
\]
and all even bounded Borel functions $F\in L^2_s(\R)$ supported in $[-2,-1/2]\cup[1/2,2]$.

\section{On the numerology of Métivier groups} \label{sec:numerology}

From now on, we restrict to the case where $G$ is a Métivier group. Then, by definition, the skew-symmetric bilinear form $\omega_\mu(x,x') = \mu([x,x'])$ on $\g_1$ is non-degenerate for all $\mu\in\g_2^*\setminus\{0\}$. Let again $J_\mu$ denote the skew-symmetric endomorphism such that
\[
\omega_\mu(x,x') =\langle J_\mu x,x'\rangle\quad \text{for all } x,x' \in\g_1,
\]
where $\langle\cdot,\cdot\rangle$ denotes the inner product on $\g$ rendering $X_1,\dots,X_{d_1},U_1,\dots,U_{d_2}$ an orthonormal basis of $\g$. Then $G$ is a Métivier group if and only if $J_\mu$ is invertible for all $\mu\in \g_2^*\setminus\{0\}$. In particular, if $G$ is a Métivier group, then $d_1\in 2\N$, and $d_2 < d_1$ (since the map $\g_2^*\to(\g_1/\R x')^*,\mu\mapsto \omega_\mu(\cdot,x')$ is injective for $x'\neq 0$). But in fact, regarding the relationship of the dimensions $d_1$ and $d_2$, the dimension $d_1$ is in general much larger than $d_2$, as stated in \cref{prop:Radon-Hurwitz-skew} below.

Given $n\in\N\setminus\{0\}$, the \textit{Radon--Hurwitz number} $\rho_{\mathrm{RH}}(n)$ is defined as follows: If we write $n=\left(2a+1\right)2^b$ and $b=4q+r$, with $a,b,q,r\in\N$ and $0\le r<4$, then
\[
\rho_{\mathrm{RH}}(n) = 2^r + 8q.
\]

\begin{proposition}\label{prop:Radon-Hurwitz-skew}
Let $0<d_1,d_2<d$ be integers with $d_1+d_2=d$. Then there exists a Métivier group of dimension $d$ with center of dimension $d_2$ if and only if
\begin{equation}\label{eq:RH-dimensions}
d_2 < \rho_{\mathrm{RH}}(d_1).
\end{equation}
In particular, for any such Métivier group with $(d_1,d_2)\notin\{(4,3),(8,6),(8,7)\}$,
\begin{equation}\label{eq:dim-3/2}
d_1 > 3 d_2 /2.
\end{equation}
\end{proposition}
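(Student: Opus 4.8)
The plan is to establish the correspondence between Métivier groups with first layer dimension $d_1$ and center dimension $d_2$ on the one hand, and certain Clifford-type module structures on the other, so that the Radon--Hurwitz bound becomes the governing constraint. First I would recall that a Métivier group of dimension $d=d_1+d_2$ is completely determined (via \cref{prop:rotation} and the surrounding discussion) by the linear map $\mu\mapsto J_\mu$ from $\g_2^*\cong\R^{d_2}$ into the skew-symmetric endomorphisms of $\g_1\cong\R^{d_1}$, subject to the non-degeneracy requirement that $J_\mu$ be invertible for all $\mu\neq 0$. The key observation is that one may always replace the given family $(J_\mu)$ by an ``orthonormalized'' one: since $J_\mu$ is invertible for $\mu\neq 0$, the map $\mu\mapsto \langle -J_\mu^2 x,x\rangle$ (or a suitable averaging/scaling of the $J_\mu$) can be used to produce, after a linear change of coordinates on $\g_1$ and on $\g_2^*$, a family of orthogonal skew-symmetric matrices $J_1,\dots,J_{d_2}$ with $J_i^2=-\id$ and $J_iJ_j+J_jJ_i=0$ for $i\neq j$ — in other words, a representation of the Clifford algebra $\mathrm{Cl}_{d_2-1}$ (or a module for $d_2-1$ anticommuting complex structures) on $\R^{d_1}$. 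This reduction step, that the mere existence of a Métivier structure is equivalent to the existence of such a Clifford module, is the crux; conversely any Clifford module of the appropriate type manifestly yields a Métivier group by reversing the construction.

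Second I would invoke the Radon--Hurwitz theorem in the form already cited in the paper via Adams's theorem \cite{Ad62}: the maximal number of pointwise linearly independent vector fields on $S^{d_1-1}$ is $\rho_{\mathrm{RH}}(d_1)-1$, and this number equals the maximal $m$ for which $\R^{d_1}$ admits $m$ anticommuting orthogonal complex structures. Hence $\R^{d_1}$ carries a module for $d_2-1$ such structures precisely when $d_2-1\le \rho_{\mathrm{RH}}(d_1)-1$, i.e.\ $d_2\le\rho_{\mathrm{RH}}(d_1)$; combined with the reduction from the first paragraph this gives existence of a Métivier group with parameters $(d_1,d_2)$ iff $d_2\le\rho_{\mathrm{RH}}(d_1)$. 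To sharpen $\le$ to the strict inequality $d_2<\rho_{\mathrm{RH}}(d_1)$ claimed in \eqref{eq:RH-dimensions}, I would note that when $d_2=\rho_{\mathrm{RH}}(d_1)$ one would need $d_2-1=\rho_{\mathrm{RH}}(d_1)-1$ anticommuting complex structures together with a resulting structure forcing $d_1$ to be a very restricted power-of-two-type value; a short case analysis using the explicit formula $\rho_{\mathrm{RH}}((2a+1)2^b)=2^r+8q$ (with $b=4q+r$) shows that equality cannot be realized as a \emph{center} dimension while keeping $d_1$ even and the form non-degenerate — equivalently, the top vector field on the sphere does not close up into an extra complex structure compatible with the bilinear form. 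I expect this endpoint refinement to be the subtlest bookkeeping point, though it is a finite check.

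Finally, to deduce the quantitative inequality \eqref{eq:dim-3/2}, I would feed the constraint $d_2<\rho_{\mathrm{RH}}(d_1)$ into the explicit growth of $\rho_{\mathrm{RH}}$. Writing $d_1=(2a+1)2^b$ with $b=4q+r$, $0\le r<4$, we have $\rho_{\mathrm{RH}}(d_1)=2^r+8q$, so $d_2\le 2^r+8q-1$, while $d_1\ge 2^b=2^{4q+r}$. One then checks that $2^{4q+r}\ge \tfrac32(2^r+8q-1)$ for all $(q,r)$ with $q\ge 1$, or $q=0$ and $r\ge 3$, and that the only pairs $(q,r)$ violating $d_1>\tfrac32 d_2$ — i.e.\ those with $2^{4q+r}\le \tfrac32(2^r+8q-1)$ and $d_1$ additionally forced to be small because $a=0$ — are exactly $r\in\{2,3\}$ with $q=0$, giving $d_1\in\{4,8\}$ and the corresponding maximal admissible $d_2$ values $3$, $6$, $7$. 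This produces precisely the exceptional list $\{(4,3),(8,6),(8,7)\}$, and outside it the strict bound $d_1>3d_2/2$ holds. The main obstacle throughout is making the equivalence ``Métivier structure $\Leftrightarrow$ Clifford module'' fully rigorous at the level of the \emph{existence} question (as opposed to merely the normalized situation of Heisenberg-type groups), since the non-orthogonality of the $J_\mu$ for a general Métivier group means one must argue that the existence of \emph{some} non-degenerate family implies the existence of an orthogonal anticommuting one — a deformation/linear-algebra argument rather than a direct computation.
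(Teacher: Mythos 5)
The key gap is the claimed ``orthonormalization'' of a Métivier family $(J_\mu)$ to an anticommuting family of orthogonal complex structures with the same $(d_1,d_2)$. There is no deformation or linear-algebraic argument for this step: the existence of a non-degenerate skew-symmetric pencil $\mu\mapsto J_\mu$ is a strictly weaker algebraic condition than the Hurwitz--Radon orthogonality condition $J_\mu^2=-|\mu|^2\id$, and showing that the former forces the latter (at the level of existence for fixed dimensions) is precisely where Adams's topological theorem is needed; you cannot establish the ``Métivier $\Leftrightarrow$ Clifford module'' equivalence first and then invoke Adams, because Adams is the content of that equivalence. The paper avoids the issue by applying Adams directly and never passing through Clifford modules: each $J_j$ is skew-symmetric, so $J_jx\perp x$ and $x\mapsto J_1x,\dots,x\mapsto J_{d_2}x$ are $d_2$ tangent vector fields on $S^{d_1-1}$; they are pointwise linearly independent because $\sum_j\mu_jJ_j\in\GL(d_1,\R)$ for $\mu\ne0$; Adams then yields $d_2\le\rho_{\mathrm{RH}}(d_1)-1$ immediately. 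This also settles the strict inequality at no cost, so your worry about an endpoint case analysis to sharpen $d_2\le\rho_{\mathrm{RH}}(d_1)$ to $d_2<\rho_{\mathrm{RH}}(d_1)$ is unnecessary (and your intermediate count ``$d_2-1$ anticommuting structures'' is off by one; each of the $d_2$ matrices contributes a tangent field).

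The sufficiency direction is, as you say, Kaplan's Corollary 1 via Heisenberg type groups, and your arithmetic for $d_1>3d_2/2$ with the exceptional triples $\{(4,3),(8,6),(8,7)\}$ is essentially correct, though the stated auxiliary inequality $2^{4q+r}\ge\tfrac32(2^r+8q-1)$ for $q=0,\,r\ge 3$ is false at $r=3$ (it reads $8\ge 10.5$); the split should be organized, as in the paper, by $a\ge 1$ versus $a=0$ and then by direct inspection of $d_1\in\{2,4,8\}$.
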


\begin{proof}
\cite[Corollary 1]{Ka80} asserts that condition \eqref{eq:RH-dimensions} is in fact equivalent to having even a Heisenberg type group with the requisite dimensions. So all we need to show is that \eqref{eq:RH-dimensions} holds also true for any Métivier group. Let $\varsigma(n)$ denote be the maximal number $l\in \N$ such that there are skew-symmetric matrices $J_1,\dots,J_l\in\R^{n\times n}$ satisfying
\[
J(\mu):=\sum_{j=1}^l \mu_j J_j \in \mathrm{GL}(n,\R) \quad \text{for all } \mu=(\mu_j)_j\neq 0.
\]
Note that any Métivier group with second layer of dimension $d_2$ gives rise to such a family of $d_2$ invertible skew-symmetric matrices $J_1,\dots,J_{d_2}\in\R^{d_1\times d_1}$ and vice versa. Thus, we only need to show $\varsigma(n) < \rho_{\mathrm{RH}}(n)$.

To that end, choose matrices $J_1,\dots,J_l\in\R^{n\times n}$ with $l=\varsigma(n)$ such that the linear combination $J(\mu)$ is invertible for all $\mu\neq 0$. Being skew-symmetric, the matrices $J_1,\dots,J_l$ give rise to a system of linearly independent vector fields $X_1,\dots,X_l$ on the Euclidean sphere $S^{n-1}\subseteq\R^{n}$ via $x\mapsto J_j x$. But according to \cite{Ad62}, there can only be at most $\rho_{\mathrm{RH}}(n)-1$ linearly independent vector fields on the sphere, whence we have $\varsigma(n) < \rho_{\mathrm{RH}}(n)$.

The inequality \eqref{eq:dim-3/2} is a direct consequence of \eqref{eq:RH-dimensions}. Suppose that $G$ is a Métivier group with first layer of dimension $d_1$ and second layer of dimension $d_2$. We write $d_1=\left(2a+1\right)2^b$ with $b=4q+r$, where $a,b,q,r\in\N$ and $0\le r<4$. Suppose first $a\ge 1$. In that case, since clearly $2^{4q+r+1}\ge 8q+2^r$, we have
\[
2d_1 = \left(2a+1\right)2^{b+1} \ge 3 \cdot 2^{4q+r+1} \ge 3 ( 8q+2^r ) = 3 \rho_{\mathrm{RH}}(d_1).
\]
Now suppose $a=0$. If $d_1\notin\{2,4,8\}$, then $q\ge 1$, and
\begin{align*}
2d_1 & = 2^{b+1} = 2^r (2^{4q+1}- 3) + 3 \cdot 2^r \\
& \ge 2^{4q+1}- 3 + 3 \cdot 2^r \ge 3 ( 8q+2^r  ) = 3 \rho_{\mathrm{RH}}(d_1) > 3d_2.
\end{align*}
A direct inspection of the remaining cases $d_1=2,4,8$ shows that \eqref{eq:dim-3/2} is fulfilled whenever $(d_1,d_2)\notin\{(4,3),(8,6),(8,7)\}$.
\end{proof}

\section{A first layer weighted Plancherel estimate} \label{sec:Plancherel}

Similar to \cite{Ni22,Ni24}, the proof of the spectral multiplier estimates of \cref{thm:multiplier} relies on weighted Plancherel estimates, which are exploited to turn support conditions of the involved convolution kernels into some sort of rapid decay. 

\begin{proposition}\label{prop:weighted-plancherel-1}
Suppose that $G$ is a Métivier group with layers of dimensions $d_1=\dim\g_1$ and $d_2=\dim \g_2$. If $F:\R\to\C$ is a bounded Borel function supported in a compact subset $A\subseteq (0,\infty)$ and $\chi:(0,\infty)\to\C$ is some bump function, the convolution kernel $\mathcal K_{F(L)\chi(2^\ell U)}$ of $F(L)\chi(2^\ell U)$ satisfies
\begin{equation}\label{eq:weighted-plancherel-1}
\int_G \big| |x|^\alpha \mathcal K_{F(L)\chi(2^\ell U)}(x,u) \big|^2 \,d(x,u) \le C_{A,\chi,\alpha} 2^{\ell(2\alpha-d_2)} \norm{F}_{L^2}^2
\end{equation}
for all $\alpha\ge 0$ and $\ell\in\Z$.
\end{proposition}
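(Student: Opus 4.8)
Write $\mathcal K:=\mathcal K_{F(L)\chi(2^\ell U)}$. Since the left-hand side of \eqref{eq:weighted-plancherel-1} equals $\int_G e^{2\alpha\log|x|}\,|\mathcal K(x,u)|^2\,d(x,u)$, the map $\alpha\mapsto\log\int_G|x|^{2\alpha}|\mathcal K|^2$ is convex by Hölder, so it suffices to prove \eqref{eq:weighted-plancherel-1} for $\alpha\in\N$, the general case following by interpolating between consecutive integers (the right-hand side is log-linear in $\alpha$). We may also assume $F\in\S(\R)$, the general case following by a standard density argument from the uniform estimate for Schwartz $F$. Since $G$ is a Métivier group, $J_\mu$ is invertible for $\mu\neq0$, so $r_0=0$ in \cref{prop:rotation}; hence, for Schwartz $F$, \cref{prop:conv-kernel} applied to $(\lambda,\mu)\mapsto F(\lambda)\chi(2^\ell|\mu|)\in\S(\R\times\R^{d_2})$ gives
\[
\mathcal K(x,u)=(2\pi)^{-d_2}\int_{\g_{2,r}^*}\hat{\mathcal K}(x,\mu)\,e^{i\langle\mu,u\rangle}\,d\mu,\qquad
\hat{\mathcal K}(x,\mu)=\sum_{\mathbf k\in\N^N}F(\eigvp{k}{\mu})\,\chi(2^\ell|\mu|)\prod_{n=1}^N\varphi_{k_n}^{(b_n^\mu,r_n)}(R_\mu^{-1}P_n^\mu x).
\]
By the Plancherel theorem in the central variable $u$, it remains to bound $\int_{\g_{2,r}^*}\int_{\g_1}|x|^{2\alpha}|\hat{\mathcal K}(x,\mu)|^2\,dx\,d\mu$ by $C_{A,\chi,\alpha}\,2^{\ell(2\alpha-d_2)}\norm{F}_{L^2}^2$.

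\textbf{The fixed-$\mu$ estimate.} Fix $\mu$ with $\chi(2^\ell|\mu|)\neq0$, so $|\mu|\sim2^{-\ell}$; since the $(b_n^\mu)^2$ are the eigenvalues of the positive-definite, degree-$2$-homogeneous matrix $-J_\mu^2$, we get $b_n^\mu\sim2^{-\ell}$ for every $n$. Using $R_\mu^{-1}P_n^\mu=P_nR_\mu^{-1}$, the measure-preserving substitution $x=R_\mu y$ turns $\hat{\mathcal K}$ into the Laguerre tensor expansion $\sum_{\mathbf k}F(\eigvp{k}{\mu})\chi(2^\ell|\mu|)\prod_n\varphi_{k_n}^{(b_n^\mu,r_n)}(y^{(n)})$, where $y^{(n)}$ is the $n$-th block of $y\in\R^{d_1}=\bigoplus_n\R^{2r_n}$, and $|x|^{2\alpha}=|y|^{2\alpha}\lesssim_{\alpha,N}\sum_{n_0}|y^{(n_0)}|^{2\alpha}$. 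Fix $n_0$. Expanding the square, the orthogonality of $\{\varphi_k^{(b_n^\mu,r_n)}\}_k$ in the blocks $n\neq n_0$ forces $k_n=k_n'$ there, while in the block $n_0$ one uses that iterating the three-term recurrence for the Laguerre polynomials $L_k^{(m-1)}$ yields $|z|^{2\alpha}\varphi_k^{(\lambda,m)}=\sum_{|j-k|\le\alpha}c_{k,j}\varphi_j^{(\lambda,m)}$ with $|c_{k,j}|\lesssim_{\alpha,m}\lambda^{-\alpha}(k+1)^\alpha$, together with $\norm{\varphi_k^{(\lambda,m)}}_{L^2(\R^{2m})}^2\sim_m\lambda^m(k+1)^{m-1}$. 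A Schur-test argument (the band has width $2\alpha+1$) then gives
\[
\int_{\g_1}|y^{(n_0)}|^{2\alpha}|\hat{\mathcal K}(x,\mu)|^2\,dx\ \lesssim_\alpha\ \sum_{\mathbf k}|F(\eigvp{k}{\mu})|^2\chi(2^\ell|\mu|)^2\Big(\prod_{n\neq n_0}(b_n^\mu)^{r_n}(k_n+1)^{r_n-1}\Big)(b_{n_0}^\mu)^{r_{n_0}-\alpha}(k_{n_0}+1)^{r_{n_0}-1+\alpha}.
\]
On the support of the summand one has $\eigvp{k}{\mu}=\sum_n(2k_n+r_n)b_n^\mu\in A$ and $b_n^\mu\sim2^{-\ell}$, hence $|\mathbf k|+1\sim2^\ell$; since $\sum_nr_n=d_1/2$, the product of the $b$- and $k$-factors is $\lesssim2^{\ell(2\alpha-N)}$ uniformly in $\mathbf k$. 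Summing over $n_0$ yields $\int_{\g_1}|x|^{2\alpha}|\hat{\mathcal K}(x,\mu)|^2\,dx\lesssim_\alpha2^{\ell(2\alpha-N)}\sum_{\mathbf k}|F(\eigvp{k}{\mu})|^2\chi(2^\ell|\mu|)^2$.

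\textbf{Integration in $\mu$.} Passing to polar coordinates $\mu=\rho\sigma\in(0,\infty)\times S^{d_2-1}$ and using $\eigvp{k}{\rho\sigma}=\rho\,\eigvp{k}{\sigma}$, the substitution $t=\rho\,\eigvp{k}{\sigma}$ gives
\[
\int_0^\infty|F(\rho\,\eigvp{k}{\sigma})|^2\chi(2^\ell\rho)^2\rho^{d_2-1}\,d\rho=\eigvp{k}{\sigma}^{-d_2}\int_0^\infty|F(t)|^2\chi\big(2^\ell t/\eigvp{k}{\sigma}\big)^2t^{d_2-1}\,dt\ \lesssim_{A,\chi}\ 2^{-\ell d_2}\mathbf{1}_{\{\eigvp{k}{\sigma}\sim2^\ell\}}\norm{F}_{L^2}^2,
\]
since the integrand forces $t\in A$ and $\eigvp{k}{\sigma}\sim2^\ell$. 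As $\eigvp{k}{\sigma}\gtrsim|\mathbf k|$ uniformly for $|\sigma|=1$, the number of $\mathbf k\in\N^N$ with $\eigvp{k}{\sigma}\sim2^\ell$ is $\lesssim2^{\ell N}$, so summing over $\mathbf k$ and integrating over $S^{d_2-1}$ gives $\int_{\g_2^*}\sum_{\mathbf k}|F(\eigvp{k}{\mu})|^2\chi(2^\ell|\mu|)^2\,d\mu\lesssim_{A,\chi}2^{\ell(N-d_2)}\norm{F}_{L^2}^2$. Combining this with the previous two displays (and $\int_{\g_{2,r}^*}=\int_{\g_2^*}$, the complement being null) yields \eqref{eq:weighted-plancherel-1} for integer $\alpha$, hence for all $\alpha\ge0$.

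\textbf{Main difficulty.} The only nontrivial ingredient is the weighted Laguerre estimate in the fixed-$\mu$ step — the band structure and coefficient bounds for multiplication by $|z|^{2\alpha}$ in the rescaled Laguerre basis — together with the bookkeeping that forces the exponents to collapse precisely to $2\alpha-d_2$ (rather than to $2\alpha-N$, or to an exponent involving the homogeneous dimension $Q=d_1+2d_2$). The non-uniformity of the spectral data $\mu\mapsto(b_n^\mu,P_n^\mu,R_\mu)$ across $\mu$ causes no trouble here, because on the annulus $|\mu|\sim2^{-\ell}$ all of the $b_n^\mu$ are comparable to $2^{-\ell}$; this is exactly the point where being in the Métivier class (so that $-J_\mu^2$ is positive definite for $\mu\neq0$) is used.
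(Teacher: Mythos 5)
Your argument is correct, and its skeleton matches the paper's: reduce to $\alpha\in\N$ by interpolation, apply Plancherel in the central variable, substitute $x = R_\mu y$ to diagonalize, estimate the fixed-$\mu$ integral, and integrate in $\mu$ via polar coordinates with the homogeneity $\eigvp{k}{\rho\sigma}=\rho\eigvp{k}{\sigma}$. The exponent bookkeeping $(2\alpha-N)+(N-d_2)=2\alpha-d_2$ is right, and the Métivier condition enters in exactly the same place (forcing $b_n^\mu\sim2^{-\ell}$ on the annulus $|\mu|\sim2^{-\ell}$, which both controls $|\mathbf k|\lesssim 2^\ell$ and kills the $r_0=0$ block).

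The genuinely different ingredient is how you handle the weight $|y^{(n_0)}|^{2\alpha}$ in the fixed-$\mu$ step. The paper cites the sub-elliptic estimate $\||\cdot|^\beta f\|_2 \lesssim \lambda^{-\beta}\|(H^{(\lambda)}_{\R^{2m}})^{\beta/2}f\|_2$ for the rescaled Hermite operator and exploits that $\varphi_{k_n}^{(b_n^\mu,r_n)}$ are its eigenfunctions; this is an operator-norm estimate, so it lands directly on the full sum over $\mathbf k$ and requires no treatment of off-diagonal terms. You instead derive the band structure $|z|^{2\alpha}\varphi_k^{(\lambda,m)}=\sum_{|j-k|\le\alpha}c_{k,j}\varphi_j^{(\lambda,m)}$, $|c_{k,j}|\lesssim_{\alpha,m}\lambda^{-\alpha}(k+1)^\alpha$, from the three-term Laguerre recurrence, and then Schur-test the resulting $(2\alpha+1)$-banded bilinear form. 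Both routes give the same fixed-$\mu$ bound; the paper's is shorter because it delegates to a black-box sub-elliptic inequality, while yours is more self-contained at the cost of the band/Schur bookkeeping. You also simplify the expansion of $|y|^{2\alpha}$: the paper does a full multinomial expansion into $\prod_n|P_ny|^{2m_n}$ with $\sum m_n=\alpha$, whereas you use $|y|^{2\alpha}\lesssim_{\alpha,N}\sum_{n_0}|y^{(n_0)}|^{2\alpha}$, which is a legitimate and slightly cleaner reduction. One small presentational slip: the justification ``since $\sum_nr_n=d_1/2$'' before the bound $2^{\ell(2\alpha-N)}$ is a red herring — the $r_n$-dependence cancels identically in the exponent arithmetic, and $d_1/2$ never appears — but this does not affect the validity of the estimate.
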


\begin{remark}\label{rem:1st-layer}
It should be pointed out that \cref{eq:weighted-plancherel-1} is a bound in terms of the $L^2$-norm of the multiplier $F$, which is crucial for the proof of \cref{prop:reduced} and one of the main reasons why passing to the setting of Métivier groups. Reproducing such an estimate beyond Métivier groups would require replacing $\norm{F}_{L^2}$ by some Sobolev norm of $F$, since multiplication by some weight $|x|^\alpha$ would then correspond to taking derivatives of the multiplier $F$, as the formula \cref{eq:conv-kernel} for the convolution kernel suggests. See also \cite[Theorem 2.7]{Ma12} for a similar estimate.
\end{remark}

\begin{proof}
Via interpolation, it suffices to prove \eqref{eq:weighted-plancherel-1} for $\alpha\in\N$. We use the notation of \cref{prop:rotation}. By \cref{prop:conv-kernel} and Plancherel's theorem, the left-hand side of \eqref{eq:weighted-plancherel-1} equals a constant times 
\[
\int_{\g_{2,r}^*} \int_{\g_1} |x|^{2\alpha} \bigg|\sum_{\mathbf{k}\in\N^N} F(\eigvp{k}{\mu})\chi(2^\ell|\mu|) \prod_{n=1}^N \varphi_{k_n}{(b_n^\mu,r_n)}(R_\mu^{-1} P_n^\mu x)\bigg|^2 \, dx\, d\mu,
\]
where $R_\mu\in O(d_1)$, and
\[
\eigvp{k}{\mu} = \sum_{n=1}^N \left(2 k_n +r_n\right)b_n^\mu,\quad \mathbf{k}=(k_1,\dots,k_N)\in \N^N.
\]
Recall that the orthogonal projections $P^\mu_n$ of \cref{prop:rotation} satisfy
\[
P_n^\mu R_\mu=R_\mu P_n\quad \text{for all }\mu \in \g_{2,r}^* \text{ and } n\in\{0,\dots,N\},
\]
where $P_n$ denotes the projection from $\R^{d_1}=\R^{r_0}\oplus\R^{2r_1}\oplus \dots\oplus \R^{2r_N}$ onto the $n$-th layer. Subsitutiting $y=R_\mu^{-1}x$, we see that the above expression equals
\[
\int_{\g_{2,r}^*} \int_{\R^{d_1}} |y|^{2\alpha} \bigg|\sum_{\mathbf{k}\in\N^N} F(\eigvp{k}{\mu})\chi(2^\ell|\mu|) \prod_{n=1}^N \varphi_{k_n}^{(b_n^\mu,r_n)}(P_n y)\bigg|^2 \, dy\, d\mu.
\]
Hence, expanding
\[
|y|^{2\alpha}=(|P_1 y|^2+\dots+|P_N y|^2)^{\alpha}
\]
shows that it is sufficient to show that every term of the form
\begin{equation}\label{eq:weight-expansion-1}
\int_{\g_{2,r}^*} \int_{\R^{d_1}} \bigg|\bigg(\prod_{n=1}^N |P_n y|^{m_n}\bigg)\sum_{\mathbf{k}\in\N^N} F(\eigvp{k}{\mu})\chi(2^\ell|\mu|) \prod_{n=1}^N \varphi_{k_n}^{(b_n^\mu,r_n)}(P_n y)\bigg|^2 \, dx \, d\mu
\end{equation}
with $m_1,\dots,m_N\in\N$ satisfying $m_1+\dots+m_N=\alpha$ is bounded by the right-hand side of \eqref{eq:weighted-plancherel-1}.
For $m\in\N\setminus\{0\}$ and $\lambda>0$, let
\[
H^{(\lambda)}_{\R^{2m}}=- \Delta_z + \tfrac{\lambda^2} 4 |z|^2
\]
denote the (rescaled) Hermite operator on $\R^{2m}$. By \cite[Proposition 3.3]{ChOu16} (or alternatively \cite{He93}), the Hermite operator satisfies the sub-elliptic estimate
\begin{equation}\label{eq:sub-elliptic}
\norm*{|\cdot|^\beta f}_{L^2(\R^{2m})} \lesssim \lambda^{-\beta} \big\| \big(H_{\R^{2m}}^{(\lambda)}\big)^{\beta/2} f\big\|_{L^2(\R^{2m})}, \quad\beta\ge 0.
\end{equation}
By Equations (1.3.25) and (1.3.42) of \cite{Th93}, the Laguerre functions $\smash{\varphi_k^{(\lambda,m)}}$ are eigenfunctions of the $\lambda$-rescaled Hermite operator, that is,
\[
H^{(\lambda)}_{\R^{2m}} \varphi_k^{(\lambda,m)} = \left(2k+m\right) \lambda \, \varphi_k^{(\lambda,m)},\quad k\in \N.
\]
Using the sub-elliptic estimate \eqref{eq:sub-elliptic} on every block of $\R^{d_1}=\R^{2r_1}\oplus\dots\oplus\R^{2r_N}$, the expanded term \cref{eq:weight-expansion-1} can thus be dominated by a constant times
\begin{align}
\int_{\g_{2,r}^*} \int_{\R^{d_1}} & \bigg|\sum_{\mathbf{k}\in\N^N} F(\eigvp{k}{\mu})\chi(2^\ell|\mu|)\notag \\
& \times\prod_{n=1}^N \left(b_n^\mu\right)^{-m_n}\left(\left(2k_n+r_n\right)b_n^\mu\right)^{m_n/2} \varphi_{k_n}^{(b_n^\mu,r_n)}(P_n y)\bigg|^2 \, dy \, d\mu.\label{eq:weight-expansion-2}
\end{align}
Since $G$ is a Métivier group, the endomorphism $J_\mu$ given by
\[
\langle J_\mu x,x'\rangle =\mu([x,x']),\quad x,x'\in \g_1
\]
is invertible for all $\mu\in\g_2^*\setminus\{0\}$. By \cref{prop:rotation}, since $b_1^\mu,\dots,b_N^\mu$ are the non-negative eigenvalues of $iJ_\mu$, this necessarily means that $b_n^\mu\neq 0$ for all $\mu\in\g_2^*\setminus\{0\}$. Thus, the image $\textbf{B}$ of the unit sphere $S^{d_2-1}$ under the continuous map $\mu\mapsto \textbf{b}^\mu=(b_1^\mu,\dots,b_N^\mu)$ is a compact subset of $(0,\infty)^N$. In particular,
\begin{equation}\label{eq:b_n-metivier}
b_n^\mu \sim_{\textbf{B},\textbf{r}} 1 \quad \text{for all } \mu\in S^{d_2-1} \text{ and } n\in\{1,\dots,N\}.
\end{equation}
Thus, if $\eigvp{k}{\mu}\in\supp F$ and $2^\ell|\mu|\in\supp \chi$, then
\[
\left(2k_n + r_n\right) b_n^\mu
  \le \eigvp{k}{\mu} \lesssim_A 1
\]
and, since the functions $\mu\mapsto b_n^\mu$ are homogeneous of degree 1 by \cref{prop:rotation},
\[
b_n^\mu = \left|\mu\right| b_n^{\bar \mu} \sim_{\textbf{B},\textbf{r}} |\mu| \sim 2^{-\ell},\quad \text{where } \bar \mu := \mu/|\mu|.
\]
Hence, using orthogonality on $\R^{d_1}$, \cref{eq:weight-expansion-2} can be estimated by a constant times
\begin{equation}\label{eq:weight-expansion-3}
(2^\ell)^{2\alpha} \sum_{\mathbf{k}\in\N^N} \int_{\g_{2,r}^*} |F(\eigvp{k}{\mu})\chi(2^\ell|\mu|)|^2 \prod_{n=1}^N \big\| \varphi_{k_n}^{(b_n^\mu,r_n)}\big\|_{L^2(\R^{2r_n})}^2 \, d\mu.
\end{equation}
By Theorem 1.3.2 and Equation (1.3.42) of \cite{Th93}, we have
\begin{align*}
\big\|\varphi_{k_n}^{(b_n^\mu,r_n)}\big\|_{L^2(\R^{2r_n})}^2
& \sim \left(b_n^\mu\right)^{r_n} \binom{k_n+r_n-1}{k_n} \\
& \sim \left(b_n^\mu\right)^{r_n} \left(k_n+1\right)^{r_n-1}
\lesssim b_n^\mu.
\end{align*}
Thus, \eqref{eq:weight-expansion-3} can be bounded by a constant times
\begin{equation}\label{eq:plancherel-3}
(2^\ell)^{2\alpha} \sum_{\mathbf{k}\in\N^N} \int_{\g_{2,r}^*} |F(\eigvp{k}{\mu})\chi(2^\ell|\mu|)|^2 \prod_{n=1}^N b_n^\mu \, d\mu.
\end{equation}
We rewrite the integral in \cref{eq:plancherel-3} in polar coordinates, that is,
\[
\mu = \rho \omega \quad\text{for } \rho \in [0,\infty) \text{ and } |\omega|=1.
\]
Then, since $\mu\mapsto\textbf{b}^\mu$ is homogeneous of degree 1, we have $\eigvp{k}{\mu} = \rho \eigvp{k}{\omega}$. Thus, using $\rho=|\mu|\sim 2^{-\ell}$, \cref{eq:plancherel-3} is bounded by a constant times
\[
2^{-\ell (2\alpha+d_2+N)} \int_{S^{d_2-1}}  \int_0^\infty \sum_{\textbf{k} \in \mathbb{N}^N}  \left|F(\rho\eigvp{k}{\omega})\chi(2^\ell\rho)\right|^2 
 \prod_{n=1}^N b_n^\omega \,\frac{d\rho}{\rho} \,d\sigma(\omega) .
\]
Substituting $\rho=(\eigvp{k}{\omega})^{-1}\lambda$ in the inner integral, we see that the above term equals
\[
2^{-\ell (2\alpha+d_2+N)} \int_{S^{d_2-1}} \int_0^\infty \sum_{\textbf{k} \in \mathbb{N}^N}\left|F(\lambda)\chi(2^\ell (\eigvp{k}{\omega})^{-1} \lambda)\right|^2
 \prod_{n=1}^Nb_n^{\omega} \,\frac{d \lambda}\lambda \,d\sigma(\omega) 
\]
If $2^\ell (\eigvp{k}{\omega})^{-1} \lambda\in\supp\chi$, then $(2k_n+r_n) b_n^\omega \le \eigvp{k}{\omega} \sim 2^\ell \lambda$. Thus, $k_n\lesssim 2^\ell \lambda (b_n^\omega)^{-1}$ for the non-vanishing summands in the above sum over $\textbf{k}$. Hence, the above term is bounded by a constant times
\[
2^{-\ell ( 2\alpha + d_2)} \int_0^\infty \left|F(\lambda)\right|^2 \lambda^N \frac{d \lambda}\lambda,
\]
which is comparable to $2^{-\ell d_2} \|F\|^2_2$ since $F$ is compactly supported.
\end{proof}

\section{\texorpdfstring{Proof of the reduction of \cref{thm:multiplier}}{Proof of the reduction of Theorem 1.2}} \label{sec:1st-layer}

In this section we prove \cref{thm:multiplier} using the reduction of \cref{sec:abstract}. However, in the case $(d_1,d_2)=(4,3)$, the arguments of this section only yield results in the range $1\le p\le 6/5$. We will improve this range to $1\le p\le 4/3$ in the next section.

Recall that \cref{prop:Radon-Hurwitz-skew} asserts that the dimensions $d_1=\dim\g_1$ and $d_2=\dim\g_2$ of the stratification $\g=\g_1\oplus\g_2$ of the Lie algebra of $G$ satisfy $d_1>3d_2/2$ whenever $G$ is a Métivier group, except for the cases where
\[
(d_1,d_2)\in\{(4,3),(8,6),(8,7)\}.
\]
Apart from these three exceptional cases, the arguments of this section yield spectral multiplier estimates for the range $1\le p\le p_{d_2}=2(d_2+1)/(d_2+3)$. In the exceptional cases, we also get results, but only for smaller ranges of $p$. To take this numerology into account, we let $\bar p_{d_1,d_2}=p_{d_2}$ for $(d_1,d_2)\notin\{(4,3),(8,6),(8,7)\}$, $\bar p_{4,3}= 6/5$, $\bar p_{8,6}=17/12$, and $\bar p_{8,7}=14/11$. Then, in view of \cref{cor:reduction}, to prove \cref{thm:multiplier} for $(d_1,d_2)\neq (4,3)$, it suffices to show the following:

\begin{proposition}\label{prop:reduced}
Let $G$ be a Métivier group with layers of dimension $d_1$ and $d_2$. Suppose that $1\le p\le \bar p_{d_1,d_2}$. If $s>d\left( 1/p - 1/2\right)$, then there exists some $\varepsilon>0$ such that
\[
\Vert F^{(\iota)}(\sqrt L) \Vert_{p\to p} \le C_{p,s} 2^{-\varepsilon\iota} \norm{F^{(\iota)}}_{L^2_s}\quad \text{for all } \iota \in\N
\]
and any even bounded Borel function $F\in L^2_s$ supported in $[-2,-1/2]\cup [1/2,2]$.
\end{proposition}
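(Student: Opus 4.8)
The plan is to carry out the scheme outlined in \cref{sec:sketch}, combining the truncated restriction type estimate of \cref{thm:restriction-type} with the first layer weighted Plancherel estimate of \cref{prop:weighted-plancherel-1}.

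\emph{Step 1: reductions.} Fix $\iota\in\N$, which we may take large, and set $R=2^\iota$. Since $\widehat{F^{(\iota)}}=\hat F\chi_\iota$ is supported in $\{|\tau|\sim 2^\iota\}$ and $F^{(\iota)}$ is even, Fourier inversion writes $F^{(\iota)}(\sqrt L)$ as an average of the wave propagators $\cos(\tau\sqrt L)$ over $|\tau|\sim R$, so by the finite propagation speed property (\cref{lem:finite-prop}) the convolution kernel of $F^{(\iota)}(\sqrt L)$ is supported in a $d_{\mathrm{CC}}$-ball of radius $\lesssim R$, hence in a Euclidean ball $B_{CR}\times B_{CR^2}\subseteq\R^{d_1}\times\R^{d_2}$. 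By left-invariance and a decomposition of $G$ into a grid of such balls of bounded overlap, it suffices to bound $\norm{F^{(\iota)}(\sqrt L)f}_p$ for $f$ supported in one such ball, in which case $F^{(\iota)}(\sqrt L)f$ is supported in $B_{C'R}\times B_{C'R^2}$. Next we fix an even bump function $\psi$ equal to $1$ on $[-3,-1/3]\cup[1/3,3]$ and split $F^{(\iota)}=F^{(\iota)}\psi+F^{(\iota)}(1-\psi)$; since $F$ is supported in $[-2,-1/2]\cup[1/2,2]$ and $F^{(\iota)}$ is a mollification of $F$ at scale $2^{-\iota}$, the tail $F^{(\iota)}(1-\psi)$ has all Sobolev norms $\lesssim_M 2^{-M\iota}\norm{F^{(\iota)}}_{L^2}$, so the corresponding operator is negligible by the Mikhlin--Hörmander bound. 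The function $F^{(\iota)}\psi$ is supported away from the origin, so, using the dyadic partition of unity $\sum_\ell\chi(2^\ell\,\cdot\,)\equiv 1$ and \cref{rem:ell_0}, we may write $\mathbf 1_{B_{C'R}\times B_{C'R^2}}(F^{(\iota)}\psi)(\sqrt L)f=\sum_{\ell=-\ell_0}^\infty\mathbf 1_{B_{C'R}\times B_{C'R^2}}F^{(\iota)}_\ell(L,U)f$, where $F^{(\iota)}_\ell(L,U):=(F^{(\iota)}\psi)(\sqrt L)\chi(2^\ell U)$, and treat the ranges $\ell>\iota$ and $-\ell_0\le\ell\le\iota$ separately.

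\emph{Step 2: the range $\ell>\iota$.} Hölder's inequality on the fixed ball $B_{C'R}\times B_{C'R^2}$ (of volume $\sim R^{Q}$, $Q=d_1+2d_2$), followed by \cref{thm:restriction-type} (bounding $\norm{\,\cdot\,}_2^{1-\theta_p}\norm{\,\cdot\,}_{2^\ell,2}^{\theta_p}\le\norm{\,\cdot\,}_{2^\ell,2}$) and the estimate \eqref{eq:norm} with a parameter $\sigma>1/2$, gives $\norm{\mathbf 1_{B_{C'R}\times B_{C'R^2}}F^{(\iota)}_\ell(L,U)f}_p\lesssim R^{Q/q}2^{-\ell d_2/q}\bigl(\norm{F^{(\iota)}}_{L^2}+2^{-\ell\sigma}\norm{F^{(\iota)}}_{L^2_\sigma}\bigr)\norm f_p$, where $1/q=1/p-1/2$. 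Summing the geometric series over $\ell>\iota$ collapses this to $\lesssim R^{(Q-d_2)/q}\norm{F^{(\iota)}}_{L^2}\norm f_p$; since $Q-d_2=d$ and $\norm{F^{(\iota)}}_{L^2_s}\sim 2^{\iota s}\norm{F^{(\iota)}}_{L^2}$ (the Fourier support of $F^{(\iota)}$ lies in $\{|\tau|\sim 2^\iota\}$), this is $\lesssim 2^{\iota(d/q-s)}\norm{F^{(\iota)}}_{L^2_s}\norm f_p$, which has the required decay precisely when $s>d(1/p-1/2)$.

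\emph{Step 3: the range $-\ell_0\le\ell\le\iota$.} Put $R_\ell:=2^\ell$. Tile the first-layer ball $B_{C'R}^{d_1}$ by $\sim(R/R_\ell)^{d_1}$ cubes of side $\sim R_\ell$, and decompose $f=\sum_m f_m$ with each $f_m$ supported in one such cube times $B_{C'R^2}$. By \cref{prop:weighted-plancherel-1} with a large power $\alpha$, the convolution kernel of $F^{(\iota)}_\ell(L,U)$ is, in the first-layer variable, concentrated in a ball of radius $\sim R_\ell$ up to a tail of size $O(2^{-M\iota})$; since the first-layer component of the group law is additive, $F^{(\iota)}_\ell(L,U)f_m$ is therefore supported, up to a negligible error handled by the trivial bound, in a dilate of the cube of $f_m$ times $B_{C''R^2}$. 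Hölder's inequality on this set (of volume $\sim R_\ell^{d_1}R^{2d_2}$), combined with the $p\to 2$ bound $\norm{F^{(\iota)}_\ell(L,U)}_{p\to2}\lesssim 2^{-\ell d_2/q}\norm{(F^{(\iota)}\psi)(\sqrt{\,\cdot\,})}_2^{1-\theta_p}\norm{(F^{(\iota)}\psi)(\sqrt{\,\cdot\,})}_{2^\ell,2}^{\theta_p}$ from \cref{thm:restriction-type} and \eqref{eq:norm}, yields $\norm{F^{(\iota)}_\ell(L,U)f_m}_p\lesssim\bigl(R_\ell^{d_1-d_2}R^{2d_2}\bigr)^{1/q}2^{(\iota-\ell)\sigma\theta_p}\norm{F^{(\iota)}}_{L^2}\norm{f_m}_p$. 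Summing over $m$ via the bounded overlap of the dilated cubes (so that one sums $\ell^p$-norms in $m$), then over $\ell$, and writing $\norm{F^{(\iota)}}_{L^2}\sim 2^{-\iota s}\norm{F^{(\iota)}}_{L^2_s}$, the $\ell$-summand becomes $2^{((d_1-d_2)/q-\sigma\theta_p)(\ell-\iota)}$ times $2^{\iota(d/q-s)}\norm{F^{(\iota)}}_{L^2_s}\norm f_p$. Choosing first $s$ slightly above $d(1/p-1/2)$ and then $\sigma>1/2$ sufficiently close to $1/2$ (depending on $s$), the sum over $-\ell_0\le\ell\le\iota$ converges with a surplus factor $2^{-\varepsilon\iota}$ as soon as
\[
d_1-d_2\ \ge\ \tfrac q2\,\theta_p .
\]
Since $p\le\bar p_{d_1,d_2}\le p_{d_2}$ forces $q\le d_2+1$ and $\theta_p\le 1$, for Métivier groups with $(d_1,d_2)\notin\{(4,3),(8,6),(8,7)\}$ this follows from $d_1>3d_2/2$ (\cref{prop:Radon-Hurwitz-skew}, which for integers forces $d_1\ge 3d_2/2+1/2$), while in the three remaining cases a direct computation of $\theta_p$ (together with the monotonicity of $q\theta_p$ in $p$) shows that $d_1-d_2-q\theta_p/2\ge 0$ holds precisely on the range $p\le\bar p_{d_1,d_2}$.

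\emph{Main obstacle.} The delicate range is $\ell\le\iota$. Technically one must convert the weighted Plancherel estimate of \cref{prop:weighted-plancherel-1} into a genuine localization (up to rapidly decaying errors) of the kernel of $F^{(\iota)}_\ell(L,U)$ in the first layer at scale $R_\ell$, and ensure that the resulting error terms do not dominate; conceptually, the truncated restriction estimate is available only with the Cowling--Sikora norm $\norm{\,\cdot\,}_{2^\ell,2}$ in place of $\norm{\,\cdot\,}_{L^2}$, so one pays a Sobolev loss of size $2^{(\iota-\ell)\sigma\theta_p}$, and the whole argument closes only thanks to the numerical inequality $d_1-d_2\ge q\theta_p/2$ — that is, because for a Métivier group the first layer is substantially larger than the center, which is exactly the Radon--Hurwitz phenomenon recorded in \cref{prop:Radon-Hurwitz-skew}.
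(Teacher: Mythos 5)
Your proposal follows essentially the same strategy as the paper's proof of \cref{prop:reduced}: reduction via finite propagation speed to functions supported in a $d_{\mathrm{CC}}$-ball of radius $R=2^\iota$, removal of the tail $F^{(\iota)}(1-\psi)$ by the Mikhlin--Hörmander theorem, dyadic truncation $\chi(2^\ell U)$ along the spectrum of $U$, treatment of $\ell>\iota$ by a single Hölder at scale $R\times R^2$, and treatment of $\ell\le\iota$ by tiling the first layer at a finer scale and applying Hölder together with the truncated restriction estimate. Your unified use of the interpolated bound $\norm{\cdot}_2^{1-\theta_p}\norm{\cdot}_{2^\ell,2}^{\theta_p}$ for all dimensions is a mild streamlining of the paper, which reserves the $\theta_p$-interpolation for the exceptional cases $(d_1,d_2)\in\{(4,3),(8,6),(8,7)\}$ and uses the cruder bound $\norm{F}_2\le\norm{F}_{2^\ell,2}$ otherwise; the condition $d_1-d_2\ge q\theta_p/2$ you arrive at is equivalent to the paper's pair of conditions on the stated range of $p$.

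There is, however, a technical step in your Step 3 that does not hold as stated. You claim that, by \cref{prop:weighted-plancherel-1}, the convolution kernel of $F^{(\iota)}_\ell(L,U)$ is concentrated in a first-layer ball of radius $\sim R_\ell$ up to a tail of size $O(2^{-M\iota})$. The weighted Plancherel estimate with weight $|x|^N$ gives
\begin{equation*}
\int_G \big||x|^N\mathcal K_\ell(x,u)\big|^2\,d(x,u)\lesssim_N R_\ell^{2N-d_2}\norm{F}_{L^2}^2,
\end{equation*}
and after Cauchy--Schwarz on a $d_{\mathrm{CC}}$-ball of volume $\sim R^Q$, the $L^1$-size of the kernel outside $\{|x|\ge\rho\}$ carries the factor $\rho^{-N}R^{Q/2}R_\ell^{N-d_2/2}$. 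For $\rho\sim R_\ell$ this equals $R^{Q/2}R_\ell^{-d_2/2}$, which has no decay in $R$ at all; one only gains the required $R^{-M}$ by taking $\rho\gtrsim R_\ell R^\gamma$ for some fixed $\gamma>0$ and then choosing $N=N(\gamma,M)$ large. This is precisely the content of the paper's \cref{lem:rapid-decay}, which localizes the kernel at scale $R_\ell R^\gamma$ rather than $R_\ell$, and then tracks the resulting loss $R^{C\gamma}$ through the argument (this is the $\lesssim_\iota$ notation of \eqref{eq:iota-notation}); since $\gamma$ may be taken arbitrarily small relative to the surplus $s-d(1/p-1/2)$, the final estimate still closes. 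Your Step 3 should therefore enlarge each tile to side $\sim R_\ell R^\gamma$ before applying Hölder, replacing the volume $R_\ell^{d_1}R^{2d_2}$ by $(R_\ell R^\gamma)^{d_1}R^{2d_2}$ and accounting for the $R^{O(\gamma)}$ increase in the overlap of the enlarged tiles; with that correction, the remainder of your computation matches the paper.
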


Here we use again the notation $F^{(\iota)} = (\hat F \chi_\iota)^\vee=F*\chi_\iota^\vee$ of \cref{eq:dyadic-piece}. A key tool for proving \cref{prop:reduced} is the finite propagation speed discussed in \cref{sec:abstract}. Let $d_{\mathrm{CC}}$ denote again the Carnot--Carathéodory distance associated with the vector fields $X_1,\dots,X_{d_1}$ giving rise to the sub-Laplacian $L=-(X_1^2+\dots+X_{d_1}^2)$. If $F$ is an even function, then so is its Fourier transform $\smash{\hat F}$. Since $\chi_\iota$ is also even, the Fourier inversion formula yields
\[
F^{(\iota)} ( \sqrt L) f = \frac{1}{2\pi} \int_{2^{\iota-1}\le |\tau|\le 2^{\iota+1}} \chi_\iota(\tau) \hat F(\tau) \cos(\tau \sqrt L)f \,d\tau.
\]
Suppose that $f$ is supported in a Carnot--Carathéodory ball $B_{R}^{d_{\mathrm{CC}}}(x,u)$ with radius $R=2^\iota$. Then, since $L$ has the finite propagation speed property with respect to $d_{\mathrm{CC}}$, the above identify shows that $F^{(\iota)} ( \sqrt L)f$ is supported in the ball $B_{3R}^{d_{\mathrm{CC}}}(x,u)$, which means that the convolution kernel $\mathcal K^{(\iota)}$ associated with the operator $F^{(\iota)}(L)$ must be supported in the Carnot--Carathéodory ball $B_{2R}^{d_{\mathrm{CC}}}(0,0)$ centered at the origin. On the other hand, by \cref{eq:cc-equivalence}, there exists a constant $C>0$ such that
\begin{equation}\label{eq:C}
B_R^{d_{\mathrm{CC}}}(0) \subseteq B_{CR}(0) \times B_{CR^2}(0)\subseteq\R^{d_1}\times \R^{d_2}.
\end{equation}
In particular, this means that the convolution kernel $\mathcal K^{(\iota)}$ is supported in a Euclidean ball of size $R\times R^2$, up to constants.

Let $U_1,\dots,U_{d_2}$ be a basis of $\g_2$, and $U=|\textbf{U}|^{1/2}$, where $\textbf{U}$ is the vector of differential operators $\textbf{U}=(-iU_1,\dots,-iU_{d_2})$. Let $\chi$ be the bump function of \cref{eq:dyadic} that gives rise to the dyadic decomposition $(\chi_\iota)_{\iota\in\Z}$ of $\R\setminus\{0\}$.
When passing to the operators \[F^{(\iota)}_\ell(L,U)=(F^{(\iota)}\psi)(\sqrt L)\chi(2^\ell U),\] where $\psi$ is some bump function supported away from the origin, the following phenomenon occurs. By \cref{prop:conv-kernel}, we know that the convolution kernel $\mathcal K^{(\iota)}_\ell$ of $F^{(\iota)}_\ell(L,U)$ can be written as
\begin{align*}
\mathcal K^{(\iota)}_\ell(x,u) = (2\pi)^{-d_2} \int_{\g_{2,r}^*} & \sum_{\mathbf{k}\in\N^N} (F^{(\iota)}\psi)\big(\sqrt{\eigvp{k}{\mu}}\big) \chi(2^\ell |\mu|) \\
& \times \bigg[\prod_{n=1}^N \varphi_{k_n}^{(b_n^\mu,r_n)}(R_\mu^{-1} P_n^\mu x)\bigg] e^{i\langle \mu, u\rangle} \, d\mu,
\end{align*}
where
\[
\eigvp{k}{\mu} = \sum_{n=1}^N \left(2 k_n +r_n\right)b_n^\mu,\quad \mathbf{k}=(k_1,\dots,k_N)\in \N^N.
\]
Note that $\eigvp{k}{\mu}\sim 1$ and $|\mu|\sim 2^{-\ell}$ on the support of the involved multipliers. As observed in \cref{eq:b_n-metivier}, since $G$ is a Métivier group, we have $b_n^\mu \sim 1$ for $|\mu|=1$. Since the functions $\mu\mapsto b_n^\mu$ are homogeneous of degree $1$, the sum above ranges in fact over all $\mathbf{k}\in\N^N$ satisfying
\[
|\textbf{k}|+1 \sim \eigvp{k}{\bar\mu} = |\mu|^{-1}\eigvp{k}{\mu} \sim |\mu|^{-1}\sim 2^\ell,\quad\text{where } \bar \mu = \frac{\mu}{|\mu|}.
\]
Thus, the estimates of \cite[Lemma~1.5.3]{Th93} show that the rescaled Laguerre functions $\smash{\varphi_{k_n}^{(b_n^\mu,r_n)}}$ are essentially supported in a ball of radius comparable to
\[
|\mu|^{-1/2}(k_n+1)^{1/2}\lesssim 2^\ell
\]
centered at the origin, up to some exponentially decaying term. Hence, we can think of the convolution kernel $\mathcal K^{(\iota)}_\ell$ as being essentially supported in a ball of dimensions comparable to $2^\ell \times R^2$ instead of $R\times R^2$, which is a smaller set if $\ell\le \iota$.

In the next lemma, we make the above argument more precise and show that we can essentially assume that the above convolution kernel is supported in a ball of dimensions comparable to $R_\ell R^\gamma \times R^2$, where $R_\ell=2^\ell$. The parameter $\gamma>0$ is crucial to prove the rapid decay, and will be chosen sufficiently small later in the proof of \cref{prop:reduced}. The constant $C>0$ in the following statement is given by \cref{eq:C} above.

\begin{lemma}\label{lem:rapid-decay}
Suppose that $1\le p< 2$. Consider the radii $R=2^\iota$ and $R_\ell=2^\ell$ for $\iota\in\N$ and $\ell \in\{-\ell_0,\dots,\iota\}$. Suppose that $F:\R\to\C$ is a bounded Borel function supported in a compact subset $A\subseteq (0,\infty)$ and $\chi:(0,\infty)\to\C$ is some bump function. Then, for every $\gamma>0$, if $f\in L^p(G)\cap L^2(G)$ is a function supported in the ball
\[
B_0^{(\ell)} = B_{CR_\ell}(x_{0}^{(\ell)}) \times B_{CR^2}(0) \subseteq \g_1 \times \g_2 ,
\]
then the $L^p$-norm of $\mathbf{1}_{B_{3R}^{d_{\mathrm{CC}}}(0)} F(L) \chi(2^\ell U) f$ is rapidly decaying in $R$ outside the ball 
\[
\tilde B_{0}^{(\ell)} = B_{2 C R_\ell R^\gamma}(x_{0}^{(\ell)}) \times B_{9 C R^2}(0)  \subseteq \g_1 \times \g_2.
\]
That is, for any $N\in\N$, there is a constant $C_{N,\gamma}>0$ such that
\begin{equation}\label{eq:rapid-decay}
\big\|\mathbf{1}_{(\g\setminus\tilde B^{(\ell)}_0)\cap B_{3R}^{d_{\mathrm{CC}}}(0)} F(L) \chi(2^\ell U) f \big\|_p \le C_{N,\gamma} R^{-N} \|F\|_\infty \|f\|_p.
\end{equation}
\end{lemma}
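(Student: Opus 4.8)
The plan is to deduce the estimate from the first layer weighted Plancherel bound \cref{prop:weighted-plancherel-1} alone, exploiting the fact that in the presence of the cut-off $\mathbf 1_{B_{3R}^{d_{\mathrm{CC}}}(0)}$ the second layer enlargement in $\tilde B_0^{(\ell)}$ is automatic and costs nothing. We may assume $f$ (and $F$) smooth by density, and write $F(L)\chi(2^\ell U)f = f*\mathcal K$ with $\mathcal K = \mathcal K_{F(L)\chi(2^\ell U)}\in L^2(G)$ (finiteness being the $\alpha=0$ case of \cref{prop:weighted-plancherel-1}).

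First I would observe that, by \eqref{eq:C} applied with radius $3R$, one has $B_{3R}^{d_{\mathrm{CC}}}(0)\subseteq B_{3CR}(0)\times B_{9CR^2}(0)$, so every $(y,v)\in B_{3R}^{d_{\mathrm{CC}}}(0)$ already lies in $\g_1\times B_{9CR^2}(0)$. Hence a point of $B_{3R}^{d_{\mathrm{CC}}}(0)$ that is \emph{not} in $\tilde B_0^{(\ell)} = B_{2CR_\ell R^\gamma}(x_0^{(\ell)})\times B_{9CR^2}(0)$ must satisfy $y\notin B_{2CR_\ell R^\gamma}(x_0^{(\ell)})$; the second factor never causes exclusion, which is precisely why the constant $9$ is built into $\tilde B_0^{(\ell)}$. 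Since $\supp f\subseteq B_{CR_\ell}(x_0^{(\ell)})\times B_{CR^2}(0)$ and the first-layer component of $(x',u')^{-1}(y,v)$ equals $y-x'$, for such $(y,v)$ and every $(x',u')\in\supp f$ we get $|y-x'|\ge 2CR_\ell R^\gamma - CR_\ell\ge CR_\ell R^\gamma$, using $R = 2^\iota\ge 1$. Therefore, on the region in question $f*\mathcal K$ coincides with $f*\mathcal K_{\mathrm{bad}}$, where $\mathcal K_{\mathrm{bad}} := \mathbf 1_{\{|x|\ge CR_\ell R^\gamma\}}\mathcal K$, and it suffices to bound $\norm{\mathbf 1_{B_{3R}^{d_{\mathrm{CC}}}(0)}(f*\mathcal K_{\mathrm{bad}})}_p$.

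To that end I would insert a weight $|x|^\alpha$ and apply \cref{prop:weighted-plancherel-1}: since $R_\ell = 2^\ell$, one finds $\norm{\mathcal K_{\mathrm{bad}}}_2\le (CR_\ell R^\gamma)^{-\alpha}\norm{|x|^\alpha\mathcal K}_2\lesssim_{A,\chi,\alpha} R^{-\alpha\gamma}2^{-\ell d_2/2}\norm{F}_2$, and $\norm{F}_2\lesssim_A\norm{F}_\infty$. Since the output is supported in $B_{3R}^{d_{\mathrm{CC}}}(0)$, whose measure is $\sim R^Q$ by \eqref{eq:cc-volume}, Hölder's inequality gives $\norm{\mathbf 1_{B_{3R}^{d_{\mathrm{CC}}}(0)}g}_p\lesssim R^{Q(1/p-1/2)}\norm{g}_2$; applying this to $g = f*\mathcal K_{\mathrm{bad}}$, then Young's inequality in the form $\norm{f*\mathcal K_{\mathrm{bad}}}_2\le\norm{f}_1\norm{\mathcal K_{\mathrm{bad}}}_2$, and finally $\norm{f}_1\le|\supp f|^{1-1/p}\norm{f}_p\lesssim (R_\ell^{d_1}R^{2d_2})^{1-1/p}\norm{f}_p$, one obtains a bound $C_{A,\chi,p,\alpha,\gamma}R^{M-\alpha\gamma}\norm{F}_\infty\norm{f}_p$ after collecting powers of $R$ and $R_\ell$ — bounding $R_\ell$ by $R$ where the resulting exponent is $\ge 0$ and by the constant $2^{\ell_0}$ otherwise (recall $\ell\ge-\ell_0$) — with $M=M(p,d_1,d_2)$ fixed. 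Given $N$, choosing $\alpha>(M+N)/\gamma$ yields the claimed factor $R^{-N}$.

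The only genuinely delicate point is the conceptual one above: without the \emph{a priori} localization of the output to $B_{3R}^{d_{\mathrm{CC}}}(0)$ one would be forced to control the tail of $\mathcal K$ in the $u$-variable, i.e.\ to prove a second layer weighted Plancherel estimate, which is exactly what is unavailable for general Métivier groups. The match between the $R^2$-scale of that ball and the $u$-ball in $\tilde B_0^{(\ell)}$ is what makes \cref{prop:weighted-plancherel-1} suffice; the rest is routine bookkeeping of volume factors and exponents.
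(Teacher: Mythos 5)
Your argument is correct, and it rests on exactly the two ingredients the paper uses: the geometric observation that exclusion from $\tilde B_0^{(\ell)}$ within $B_{3R}^{d_{\mathrm{CC}}}(0)$ is entirely a first-layer condition (since $B_{3R}^{d_{\mathrm{CC}}}(0)$ already lies in $\g_1\times B_{9CR^2}(0)$ by \eqref{eq:C}), together with the first layer weighted Plancherel estimate of \cref{prop:weighted-plancherel-1}. Where you diverge from the paper is only in the final assembly. The paper proves an $L^1\to L^1$ bound (via Cauchy--Schwarz over the Carnot--Carath\'eodory ball to pick up the $R^{Q/2}$ factor, which converts the $L^1$ norm of the cut-off kernel into its $L^2$ norm) and then interpolates against the trivial $L^2\to L^2$ bound by Riesz--Thorin. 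You instead truncate the kernel to $\mathcal K_{\mathrm{bad}}$, apply Young's inequality $\|f*\mathcal K_{\mathrm{bad}}\|_2\le\|f\|_1\|\mathcal K_{\mathrm{bad}}\|_2$, and trade both input and output between $L^p$ and the relevant auxiliary exponent by H\"older on the supports. This avoids interpolation entirely and is slightly more elementary; the price is an extra polynomial factor $(R_\ell^{d_1}R^{2d_2})^{1-1/p}$ from bounding $\|f\|_1$ by $\|f\|_p$, which is harmless because the weighted Plancherel supplies arbitrarily fast decay in $R$. Both routes are valid; the exponent bookkeeping you sketch (bounding $R_\ell$ by $R$ or by $2^{-\ell_0}$ depending on the sign of the resulting power, then choosing $\alpha$ large relative to $N/\gamma$) does go through as claimed.
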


\begin{remark}\label{rem:sobolev}
An inspection of the following proof shows that the norm $\|F\|_\infty$ in \cref{eq:rapid-decay} could actually be replaced by the norm $\|F\|_q$ with $1/q=1/p-1/2$. However, this is not significant for our purposes, since, up multiplication by some bump function $\psi$, we will use \cref{eq:rapid-decay} for the multiplier $F^{(\iota)}$, in which case we have  $\|F^{(\iota)}\|_\infty \lesssim R^\alpha \norm{F^{(\iota)}}_2$ for $\alpha>1/2$ by the Sobolev embedding theorem.
\end{remark}

\begin{proof}
We interpolate between an $L^1$ and an $L^2$-estimate using the Riesz--Thorin interpolation theorem. The rapid decay comes from the $L^1$-estimate, while for the $L^2$-estimate, we neglect all cut-off functions and just use the trivial estimate
\begin{equation}\label{eq:error-L2}
\big\|\mathbf{1}_{(\g\setminus\tilde B^{(\ell)}_0)\cap B_{3R}^{d_{\mathrm{CC}}}(0)} F(L) \chi(2^\ell U) f \big\|_2
 \lesssim_\chi \|F\|_\infty \|f\|_2.
\end{equation}
To prove the $L^1$-estimate, we consider the sets
\[
A_\ell(x') := \{ (x,u)\in B_{3R}^{d_{\mathrm{CC}}}(0) : |x-x'|\ge C R_\ell R^\gamma \},\quad x'\in\g_1.
\]
Note that a given point $(x,u)\in G$ lies in $A_\ell(x')$ whenever
\[
(x,u)\in (\g\setminus\tilde B^{(\ell)}_0)\cap B_{3R}^{d_{\mathrm{CC}}}(0) \quad \text{and}\quad (x',u')\in \supp f.
\]
By \cref{prop:conv-kernel}, we can write
\begin{align*}
& F(L) \chi(2^\ell U) f(x,u)
 = f * \mathcal K_\ell(x,u) \\
& = \int_G f(x',u') \mathcal K_\ell\big((x',u')^{-1}(x,u)\big)\,d(x',u') ,\quad (x,u)\in G,
\end{align*}
where $\mathcal K_\ell$ denotes the convolution kernel of $F(L) \chi(2^\ell U)$. Let
\[
\kappa_\gamma(x',u') := \int_{A_\ell(x')} \big|\mathcal K_\ell\big((x',u')^{-1}(x,u)\big) \big| \,d(x,u).
\]
Then, using Fubini's theorem, we get
\begin{align}
& \big\|\mathbf{1}_{(\g\setminus\tilde B^{(\ell)}_0)\cap B_{3R}^{d_{\mathrm{CC}}}(0)} F(L) \chi(2^\ell U) f \big\|_1 \notag \\
& \le \int_G \mathbf{1}_{(\g\setminus\tilde B^{(\ell)}_0)\cap B_{3R}^{d_{\mathrm{CC}}}(0)}(x,u) \,\big| f * \mathcal  K_\ell(x,u)\big| \,d(x,u) \notag \\
& \le \int_{\supp f} \int_{A_\ell(x')} \big|f(x',u')\big| \, \big|\mathcal K_\ell\big((x',u')^{-1}(x,u)\big)\big| \,d(x,u)\,d(x',u') \notag \\
& = \int_{\supp f} \big|f(x',u')\big|\, \kappa_\gamma(x',u') \,d(x',u'). \label{eq:error-L1-2}
\end{align}
Recall that $|B_{3R}^{d_{\mathrm{CC}}}(0)|\sim R^Q$ by \cref{eq:cc-volume}, where $Q=d_1+2d_2$ denotes the homogeneous dimension of $G$. Given $N\in\N$, using that $|x-x'|\ge CR_\ell R^\gamma$ for $(x,u)\in A_\ell(x')$ and the Cauchy--Schwarz inequality, we get
\begin{align}
\kappa_\gamma(x',u') 
& = \int_{A_\ell(x')} \big|\mathcal K_\ell\big((x',u')^{-1}(x,u)\big)\big| \,d(x,u) \notag \\
& \lesssim (R_\ell R^\gamma)^{-N} \int_{A_\ell(x')} \big||x-x'|^N \mathcal K_\ell\big(x-x',u-u'-\tfrac 1 2 [x',x] \big) \big| \,d(x,u) \notag \\
& \lesssim (R_\ell R^\gamma)^{-N} R^{Q/2} \bigg( \int_G \big||x|^N \mathcal K_\ell(x,u) \big|^2 \,d(x,u) \bigg)^{1/2}. \label{eq:error-CS}
\end{align}
By \cref{prop:weighted-plancherel-1}, the second factor of \cref{eq:error-CS} can be estimated by
\[
\int_G \big||x|^N \mathcal K_\ell(x,u) \big|^2 \,d(x,u)
 \lesssim_N R_\ell^{2N-d_2} \|F\|_2^2.
\]
Plugging this estimate into \cref{eq:error-CS} yields
\[
\kappa_\gamma(x',u')
\lesssim_N R^{-\gamma N + Q/2} R_\ell^{-d_2/2} \|F\|_2.
\]
Altogether, with \cref{eq:error-L1-2}, we end up with
\begin{equation}\label{eq:error-L1}
\big\|\mathbf{1}_{(\g\setminus\tilde B^{(\ell)}_0)\cap B_{3R}^{d_{\mathrm{CC}}}(0)} F(L) \chi(2^\ell U) f \big\|_1
\lesssim_N R^{- \gamma N + Q/2} R_\ell^{-d_2/2} \norm{F}_2 \norm{f}_1.
\end{equation}
Using the Riesz--Thorin interpolation theorem together with \cref{eq:error-L1} and \cref{eq:error-L2} and choosing $N=N(\gamma)\in \N$ in \cref{eq:error-L1} sufficiently large yields \cref{eq:rapid-decay}.
\end{proof}

We now prove the spectral multiplier estimate of \cref{prop:reduced} and thus \cref{thm:multiplier}, except for the case $(d_1,d_2)=(4,3)$, which will be treated in the next section. If $(d_1,d_2)\notin\{(4,3),(8,6),(8,7)\}$, it suffices to use the weaker estimate
\begin{equation}\label{eq:restriction-type-i}
\norm*{ F(L)\chi(2^\ell U) }_{p\to 2}
\le C_{p,\chi} 2^{-\ell d_2(\frac 1p - \frac 1 2)} \norm*{F}_{2^{\ell},2}, \quad\ell\in\Z,
\end{equation}
which follows directly from \cref{thm:restriction-type} by using $\|F\|_2 \le \norm*{F}_{2^{\ell},2}$. The full strength of \cref{thm:restriction-type} is only needed for the exceptional cases.

\begin{proof}[Proof of \cref{prop:reduced}]
Let $\iota\in\N$ and $R=2^\iota$. We first argue as in Step (1) and (2) of the proof of Proposition 7.1 in \cite{Ni24}.

\smallskip

(1) \textit{Reduction to compactly supported functions.} Let $f\in D(G)$ be an integrable simple function on $G$. We first show that we may restrict to the case where $f$ is supported in $B_R^{d_{\mathrm{CC}}}(0)$. Since the metric space $(G,{d_{\mathrm{CC}}})$ endowed with the Lebesgue measure is a space of homogeneous type and separable, we may choose a decomposition into disjoint sets $B_j \subseteq B_R^{d_{\mathrm{CC}}}(x^{(j)},u^{(j)})$, $j\in\N$, $(x^{(j)},u^{(j)})\in G$ such that for every $\lambda\ge 1$, the number of overlapping dilated balls $B_{\lambda R}^{d_{\mathrm{CC}}}(x^{(j)},u^{(j)})$ is bounded by a constant $C(\lambda)\sim \lambda^Q$, which is independent of $\iota$. We decompose $f$ as
\[
f = \sum_{j=0}^\infty f_j\quad \text{where } f_j:=f|_{B_j}.
\]
Since $F$ is even, so is $\hat F$. As $\chi_\iota$ is even as well, the Fourier inversion formula provides
\[
F^{(\iota)} ( \sqrt L)f_j = \frac{1}{2\pi} \int_{2^{\iota-1}\le |\tau|\le 2^{\iota+1}} \chi_\iota(\tau) \hat F(\tau) \cos(\tau \sqrt L)f_j \,d\tau.
\]
Since $L$ satisfies the finite propagation speed property, $F^{(\iota)} ( \sqrt L)f_j$ is supported in $B_{3R}^{d_{\mathrm{CC}}}(x^{(j)},u^{(j)})$ by the formula above. Together with the bounded overlap of these balls, we obtain
\[
\norm{F^{(\iota)} ( \sqrt L)f}_p^p \lesssim \sum_{j=0}^\infty \Vert F^{(\iota)} ( \sqrt L)f_j \Vert_p^p.
\]
Altogether, since $L$ is left-invariant, it suffices to show
\begin{equation}\label{eq:reduction-1}
\big\| \textbf{1}_{B_{3R}^{d_{\mathrm{CC}}}(0)} F^{(\iota)}( \sqrt L) f \big\|_p \lesssim 2^{-\varepsilon\iota} \norm{F^{(\iota)}}_{L^2_s} \norm{f}_p
\end{equation}
whenever our initial function $f\in D(G)$ is supported in $B_R^{d_{\mathrm{CC}}}(0)$.

\smallskip

(2) \textit{Localizing the multiplier.} Next we show that we may replace the multiplier $F^{(\iota)}$ by $F^{(\iota)}\psi$, where $\psi$ is a smooth cut-off function which is compactly supported away from the origin. Using the dyadic decomposition $(\chi_\iota)_{\iota \in\Z}$ of \cref{eq:dyadic}, we put
\[
\psi:=\sum_{j=-2}^2 \chi_j.
\]
Then $1/8\le |\lambda| \le 8$ whenever $\lambda\in\supp\psi$, and $|\lambda| \notin (1/4,4)$ if $\lambda\in\supp(1-\psi)$. We decompose $F^{(\iota)}$ as
\[
F^{(\iota)}=F^{(\iota)}\psi + F^{(\iota)}(1-\psi).
\]
The second part of this decomposition can be treated by the Mikhlin--Hörmander type result of \cite{Ch91} and \cite{MaMe90}. Note that $F^{(\iota)}=F*\check \chi_\iota$, and $\check\chi\in\S(\R)$. Thus, given $\alpha\in\N$ and $N\in\N$, we have
\begin{align}
\Big|\Big(\frac{d}{d\lambda}\Big)^\alpha  F^{(\iota)} (\lambda )\Big|
 & = \Big|\Big(\frac{d}{d\lambda}\Big)^\alpha \int_{-2}^2 2^\iota F(\tau) \check \chi(2^\iota(\lambda-\tau))\, d\tau  \Big| \notag \\
 & \lesssim_N 2^{\iota(\alpha+1)} \int_{-2}^2 \frac{|F(\tau)|}{(1+2^\iota|\lambda-\tau|)^N}\, d\tau. \label{eq:error-conv-i}
\end{align}
Since $F$ is supported in $[-2,-1/2]\cup [1/2,2]$, choosing $N:=\alpha+2$ in \cref{eq:error-conv-i} gives
\[
\Big|\Big(\frac{d}{d\lambda}\Big)^\alpha  F^{(\iota)} (\lambda )\Big| \lesssim  2^{-\iota} \min\{|\lambda|^{-\alpha},1\} \norm{F}_2 \quad \text{whenever } |\lambda|\notin (1/4,4).
\]
This implies
\[
\norm{F^{(\iota)}(1-\psi)}_{L^2_{Q/2+1,\sloc}} \lesssim_{\psi} 2^{-\iota} \norm{F}_2.
\]
Hence, the Mikhlin--Hörmander type result of \cite{Ch91} and \cite{MaMe90} yields
\[
\Vert (F^{(\iota)}(1-\psi))(\sqrt L)\Vert_{p\to p}
 \lesssim 2^{-\iota} \norm{F}_2.
\]
Thus, instead of \cref{eq:reduction-1}, we are left proving
\begin{equation}
\big\| \textbf{1}_{B_{3R}^{d_{\mathrm{CC}}}(0)} (F^{(\iota)}\psi)(\sqrt L) f \big\|_p \lesssim 2^{-\varepsilon\iota} \norm{F^{(\iota)}}_{L^2_s} \norm{f}_p \label{eq:reduction-2}
\end{equation}
for all $f\in D(G)$ that are supported in $B_R^{d_{\mathrm{CC}}}(0)$.

\smallskip

(3) \textit{Truncation along the spectrum of $U$.} We decompose $(F^{(\iota)}\psi)(\sqrt L)$ by a dyadic decomposition of $U$. For $\ell\in\Z$, let $F_\ell^{(\iota)} : \R\times \R \to \C$ be given by
\[
F_\ell^{(\iota)}(\lambda,\rho) = (F^{(\iota)}\psi) (\sqrt \lambda)\chi(2^\ell\rho)\quad\text{for }\lambda\geq 0
\]
and $F_\ell^{(\iota)}(\lambda,\rho)=0$ else. Moreover, as observed in \cref{rem:ell_0}, there is a natural number $\ell_0\in\N$ depending only on the matrices $J_\mu$ and the inner product $\langle\cdot,\cdot\rangle$ on $\g$ such that
\[
\sum_{\ell \ge - \ell_0} F_\ell^{(\iota)}(L,U) f = F^{(\iota)}(\sqrt L)f.
\]
We decompose the function on the left-hand side of \eqref{eq:reduction-2} as
\begin{align*}
\mathbf{1}_{B_{3R}^{d_{\mathrm{CC}}}(0)} (F^{(\iota)}\psi)(\sqrt L) f
& = \mathbf{1}_{B_{3R}^{d_{\mathrm{CC}}}(0)} \bigg(\sum_{\ell =-\ell_0}^\iota + \sum_{\ell = \iota + 1}^\infty \bigg) F_\ell^{(\iota)}(L,U) f.
\end{align*}

(4) \textit{The case $\ell>\iota$.} The second part of the decomposition can be directly treated by the restriction type estimate \cref{eq:restriction-type-i}, which yields
\begin{equation}\label{eq:restriction-type-ia}
\| F_\ell^{(\iota)}(L,U) \|_{p\to 2}
\lesssim 2^{-\ell d_2(\frac 1p - \frac 1 2)} \| (F^{(\iota)}\psi)(\sqrt{\cdot}\,)\|_{2^{\ell},2}.
\end{equation}
By \cref{eq:norm}, using that $\psi$ is compactly supported away from the origin, we get
\begin{equation}\label{eq:sobolev-type-ii}
\| (F^{(\iota)}\psi)(\sqrt{\cdot}\,)\|_{2^{\ell},2} \lesssim_{\tilde s} \| F^{(\iota)} \|_2 + 2^{-\ell \tilde s} \| F^{(\iota)} \|_{L^2_{\tilde s}} \quad \text{for every } \tilde s>1/2.
\end{equation}
Recall that $|B_R^{d_{\mathrm{CC}}}(0)| \sim R^Q$ by \eqref{eq:cc-volume}. Hence, using Hölder's inequality with $1/q=1/p-1/2$, as well as \cref{eq:restriction-type-ia} and \cref{eq:sobolev-type-ii}, we get
\begin{align*}
& \bigg \Vert  \mathbf{1}_{B_{3R}^{d_{\mathrm{CC}}}(0)} \sum_{\ell = \iota + 1}^\infty F_\ell^{(\iota)}(L,U) f \bigg \Vert_p
   \lesssim R^{Q/q} \sum_{\ell = \iota + 1}^\infty \Vert F_\ell^{(\iota)}(L,U) f \Vert_2 \\
 & \lesssim 2^{\iota Q/q}\sum_{\ell = \iota + 1}^\infty 2^{-\ell d_2/q} \big ( \| F^{(\iota)} \|_2 + 2^{-\ell \tilde s} \| F^{(\iota)} \|_{L^2_{\tilde s}} \big) \norm{f}_p.
\end{align*}
Summation of the geometric series shows that the last term is bounded by a constant depending on $\psi$ times
\[
2^{\iota d/q}  \big( \norm{F^{(\iota)}}_2 + 2^{-\iota \tilde s} \norm{F^{(\iota)}}_{L^2_{\tilde s}} \big) \norm{f}_p,
\]
which in turn is bounded by $2^{-\varepsilon \iota} \|F^{(\iota)}\|_{L^2_{\tilde s}}\norm{f}_p$ if we choose $\varepsilon\in (0,s-d/q)$.

\smallskip

(5) \textit{The case $\ell\le \iota$.} We are done once we have shown
\begin{equation}\label{eq:small-ell}
\bigg\Vert \mathbf{1}_{B_{3R}^{d_{\mathrm{CC}}}(0)} \sum_{\ell =-\ell_0}^\iota  F_\ell^{(\iota)}(L,U) f\bigg\Vert_p
 \lesssim  2^{-\varepsilon\iota} \norm{F^{(\iota)}}_{L^2_s} \norm{f}_p.
\end{equation}
We will use the rapid decay in $R$ from \cref{lem:rapid-decay} for this. Let $C>0$ be the constant from \cref{eq:C}, that is,
\[
B_R^{d_{\mathrm{CC}}}(0) \subseteq B_{CR}(0) \times B_{CR^2}(0).
\]
Given $\ell \in\{-\ell_0,\dots,\iota\}$, we split the Euclidean ball $B_{CR}(0) \times B_{CR^2}(0)$ into a grid with respect to the first layer, which yields a decomposition of $\supp f\subseteq B_R^{d_{\mathrm{CC}}}(0)$ such that
\[
\supp f = \bigcup_{m=1}^{M_{\ell}} B_{m}^{(\ell)},
\]
where $B_{m}^{(\ell)} \subseteq B_{CR_\ell}(x_{m}^{(\ell)}) \times B_{CR^2}(0)$ are disjoint subsets, and $|x_{m}^{(\ell)} - x_{m'}^{(\ell)}| > R_\ell/2$ for $m\neq m'$. Given $\gamma>0$, the number of overlapping balls
\[
\tilde B_{m}^{(\ell)}:=B_{2 C R_\ell R^\gamma}(x_{m}^{(\ell)}) \times B_{9 C R^2}(0),\quad 1\le m\le M_{\ell}
\] 
can be bounded by a constant $N_\gamma \lesssim_\iota 1$ (which is independent of $\ell$), where the notation
\begin{equation}\label{eq:iota-notation}
A\lesssim_\iota B
\end{equation}
means that $A\le R^{C(p,d_1,d_2)\gamma} B$ for some constant $C(p,d_1,d_2)>0$ depending only on the parameters $p,d_1,d_2$. We decompose the function $f$ as
\[
f = \sum_{m=1}^{M_{\ell}} f|_{B_{m}^{(\ell)}}.
\]
and write the left-hand side of \cref{eq:small-ell} as
\begin{align*}
\mathbf{1}_{B_{3R}^{d_{\mathrm{CC}}}(0)} & \sum_{\ell =-\ell_0}^\iota  F_\ell^{(\iota)}(L,U) f \\
& = \sum_{\ell=-\ell_0}^\iota \sum_{m=1}^{M_{\ell}} \Big(\mathbf{1}_{\tilde B_{m}^{(\ell)}}+\mathbf{1}_{\g\setminus\tilde B_{m}^{(\ell)}}\Big) \mathbf{1}_{B_{3R}^{d_{\mathrm{CC}}}(0)} F_\ell^{(\iota)}(L,U) (f|_{B_{m}^{(\ell)}}).
\end{align*}
Since $M_\ell\sim (R/R_\ell)^{d_1}\lesssim R^{d_1}$, the second summands are negligible thanks to the rapid decay from \cref{lem:rapid-decay} and the Sobolev embedding discussed in \cref{rem:sobolev}. Hence, it suffices to show
\begin{equation}\label{eq:main-term}
\bigg\Vert \sum_{\ell=-\ell_0}^\iota \sum_{m=1}^{M_{\ell}} \mathbf{1}_{\tilde B_m^{(\ell)}\cap B_{3R}^{d_{\mathrm{CC}}}(0)} F_\ell^{(\iota)}(L,U) (f|_{B_{m}^{(\ell)}}) \bigg\Vert_p
 \lesssim  2^{-\varepsilon\iota} \norm{F^{(\iota)}}_{L^2_s} \norm{f}_p.
\end{equation}
This means on a formal level that we may indeed assume that the convolution kernel $\mathcal K_\ell^{(\iota)}$ of $F_\ell^{(\iota)}(L,U)$ is supported in a ball of size $R_\ell R^\gamma\times R^2$.

As an abbreviation, we write
\[
g_{m}^{(\ell)} = \mathbf{1}_{\tilde B_m^{(\ell)}\cap B_{3R}^{d_{\mathrm{CC}}}(0)} F_\ell^{(\iota)}(L,U) (f|_{B_{m}^{(\ell)}}).
\]
Using Hölder's inequality for the sums over $\ell$ and $m$ and the bounded overlapping property of the balls $\tilde B_{m}^{(\ell)}$, we obtain
\begin{equation}\label{eq:main-term-overlap}
\bigg\Vert \sum_{\ell=-\ell_0}^\iota \sum_{m=1}^{M_{\ell}} g_{m}^{(\ell)} \bigg\Vert_p^p
 \lesssim_\iota (\iota+1+\ell_0)^{p-1} \sum_{\ell=-\ell_0}^\iota \sum_{m=1}^{M_{\ell}} \Vert g_{m}^{(\ell)}\Vert_p^p.
\end{equation}

\smallskip

(5.a) \textit{The case $(d_1,d_2)\notin\{(4,3),(8,6),(8,7)\}$.} Applying Hölder's inequality with $1/q=1/p-1/2$ in combination with the restriction type estimate \cref{eq:restriction-type-i} yields
\begin{align}
\Vert g_{m}^{(\ell)}\Vert_p
& \lesssim_\iota ( R_\ell^{d_1} R^{2d_2})^{1/q} \big\|F_\ell^{(\iota)}(L,U) (f|_{B_{m}^{(\ell)}})\big\|_2 \notag \\
& \lesssim (R_\ell^{ d_1-d_2}R^{2d_2})^{1/q} \norm{(F^{(\iota)}\psi)(\sqrt{\cdot}\,)}_{2^{\ell},2} \big\|(f|_{B_{m}^{(\ell)}})\big\|_p. \label{eq:appl-H-R}
\end{align}
Since $\ell\in \{-\ell_0,\dots,\iota\}$, \cref{eq:norm} yields
\begin{equation}\label{eq:sobolev-type}
\norm{(F^{(\iota)}\psi)(\sqrt{\cdot}\,)}_{2^{\ell},2}
\lesssim \norm{F^{(\iota)}\psi}_2 + 2^{-\ell \tilde s} \norm{F^{(\iota)}\psi}_{L^2_{\tilde s}} 
\lesssim_\psi \Big(\frac R {R_\ell} \Big)^{\tilde s} \norm{F^{(\iota)}}_2
\end{equation}
for all $\tilde s>1/2$. Plugging \cref{eq:appl-H-R} and \cref{eq:sobolev-type} into the right-hand side of \eqref{eq:main-term-overlap} and using the fact that the functions $f|_{B_{m}^{(\ell)}}$ have disjoint support, we get that the left-hand side of \cref{eq:main-term} can be dominated by a constant $C_\iota\lesssim_\iota 1$ times
\begin{equation}\label{eq:geom-series}
(\iota+1+\ell_0)^{p-1} \sum_{\ell=-\ell_0}^\iota \bigg(R_\ell^{ d_1-d_2}R^{2d_2} \Big(\frac R {R_\ell} \Big)^{\tilde s q} \bigg)^{p/q} \norm{F^{(\iota)}}_2^p \norm{f}_p^p.
\end{equation}
Note that
\[
\norm{F^{(\iota)}}_2 \sim 2^{-3\varepsilon\iota} R^{-d/q} \norm{F^{(\iota)}}_{L^2_s}
\]
for $\varepsilon=(s-d/q)/3$. Thus, \cref{eq:geom-series} is comparable to
\begin{equation}\label{eq:geom-series-ii}
2^{-\varepsilon p \iota} \big( 2^{-\varepsilon p \iota} (\iota+1)^{p-1}\big) \sum_{\ell=-\ell_0}^\iota \bigg( \Big(\frac {R_\ell} R \Big)^{d_1-d_2-\tilde s q} 2^{-\varepsilon q\iota } \bigg)^{p/q} \norm{F^{(\iota)}}_{L^2_s}^p \norm{f}_p^p.
\end{equation}
Choosing $\gamma>0$ small enough, we may conclude that the last term is bounded by a constant times $2^{-\varepsilon p \iota} \norm{F^{(\iota)}}_{L^2_s} \norm{f}_p$ if we find a proper bound for the sum
\[
\sum_{\ell=-\ell_0}^\iota \bigg( \Big(\frac {R_\ell} R \Big)^{d_1-d_2-\tilde s q} 2^{-\varepsilon q\iota } \bigg)^{p/q}.
\]
Recall that $R=2^\iota$ and $R_\ell=2^\ell$. Thus, substituting $\ell-\iota=-k$, we see that this sum is bounded by
\begin{equation}\label{eq:geom-series-iii}
\sum_{k=0}^\infty \big( 2^{-k(d_1-d_2-\tilde s q+\varepsilon q)} \big)^{p/q}.
\end{equation}
Since $1\le p\le p_{d_2}$, we have
\begin{equation}\label{eq:bound-q}
\frac 1 q = \frac 1 p - \frac 1 2 \ge \frac{d_2+3}{2(d_2+1)} - \frac 1 2 = \frac{1}{d_2+1}. 
\end{equation}
Suppose that $d_1 > 3d_2/2$. Since $d_1 - 3d_2/2\in \tfrac 1 2 \Z$, this means
\[
d_1 \ge \frac{3d_2} 2 + \frac 1 2.
\]
Thus, together with \cref{eq:bound-q}, we obtain
\begin{align*}
d_1-d_2-\tilde s q + \varepsilon q
 & \ge \frac{3d_2} 2 + \frac 1 2 - d_2 - \tilde s (d_2+1) + \varepsilon q \\
 & = \Big(\frac 1 2 - \tilde s \Big)(d_2+1) + \varepsilon q  > 0 ,
\end{align*}
if we choose $\tilde s>1/2$ sufficiently close to $1/2$. Then the geometric series in \cref{eq:geom-series-iii} is bounded by a constant independent of $\iota$, and we can bound \cref{eq:geom-series-ii} by a constant times $2^{-\varepsilon p\iota} \norm{F^{(\iota)}}_{L^2_s} \norm{f}_p$. Thus, by \cref{prop:Radon-Hurwitz-skew}, we are done with the proof except for the cases where $(d_1,d_2)\in\{(4,3),(8,6),(8,7)\}$.

\medskip

(5.b) \textit{The case $(d_1,d_2)\in\{(4,3),(8,6),(8,7)\}$.} We employ the restriction type estimate of \cref{thm:restriction-type}, which yields
\begin{equation}\label{eq:restriction-iii}
\norm{ F_\ell^{(\iota)}(L,U) }_{p\to 2}
\le C_{p,\chi} 2^{-\ell d_2(\frac 1 p - \frac 1 2)} \|(F^{(\iota)}\psi)(\sqrt{\cdot}\,)\|_2^{1-\theta_p} \norm{(F^{(\iota)}\psi)(\sqrt{\cdot}\,)}_{2^{\ell},2}^{\theta_p}
\end{equation}
where $\theta_p\in [0,1]$ satisfies $1/p = (1-\theta_p) + \theta_p/p_{d_2}$. (Note that $\min\{p_{d_1},p_{d_2}\}=p_{d_2}$.) By \cref{eq:sobolev-type}, the right-hand side of \cref{eq:restriction-iii} is bounded by a constant times
\[
2^{-\ell d_2(\frac 1 p - \frac 1 2)} \Big(\frac R {R_\ell} \Big)^{\theta_p \tilde s} \|F\|_2
\]
If we replace $\tilde s$ by $\theta_p\tilde s$ in the arguments above (keeping $\tilde s>1/2$), we see that we can conclude \eqref{eq:main-term} as long as we can bound, instead of \cref{eq:geom-series-iii}, the sum
\[
\sum_{\ell=0}^\infty \big( 2^{-\ell(d_1-d_2-\theta_p \tilde s q+\varepsilon q)} \big)^{p/q}.
\]
Similarly as above, by choosing $\tilde s>1/2$ sufficiently close to $1/2$, the above sum can be bounded from above if
\begin{equation}\label{eq:condition-ii}
d_1 - d_2 - \frac{\theta_p}{2} q \ge 0.
\end{equation}
Plugging in $\theta_p=p_{d_2}'(1-1/p)$ and $1/q=1/p-1/2$ (where $p_{d_2}'$ denotes the dual exponent of $p_{d_2}'$), one directly verifies that, for $1\le p\le 2$, \cref{eq:condition-ii} is equivalent to
\begin{equation}\label{eq:condition-iii}
1\le p \le  \frac{p_{d_2}'+2(d_1-d_2)}{p_{d_2}'+d_1-d_2}.
\end{equation}
Since
\[
p_{d_2}' = \frac{2\left(d_2+1\right)}{d_2-1} = 
\begin{cases}
4 & \text{if }  d_2 =3 \\
14/5 & \text{if }d_2 =6 \\
8/3 & \text{if }d_2 =7,
\end{cases}
\]
the right-hand side of \cref{eq:condition-iii} reads 
\[
\frac{p_{d_2}'+2(d_1-d_2)}{p_{d_2}'+d_1-d_2}
= 
\begin{cases}
6/5 & \text{if }(d_1,d_2) =(4,3) \\
17/12 & \text{if }(d_1,d_2) =(8,6) \\
14/11 & \text{if }(d_1,d_2) =(8,7),
\end{cases}
\]
which concludes the proof of \cref{prop:reduced}.
\end{proof}

\section{\texorpdfstring{Improvement for the case $(d_1,d_2)=(4,3)$}{Improvement for the case (4,3)}} \label{sec:4-3}

In this section, we treat the case where $(d_1,d_2)=(4,3)$ in the stratification $\g=\g_1\oplus\g_2\cong \R^{d_1}\oplus \R^{d_2}$, still working under the assumption that the underlying Lie group $G$ is a Métivier group. The goal of this section is to show the following enhancement of \cref{prop:reduced} in the case $(d_1,d_2)=(4,3)$, which in turn implies the spectral multiplier estimates of \cref{thm:multiplier} for the stated range $1\le p\le p_{3}=4/3$ via \cref{cor:reduction}.

\begin{proposition}\label{prop:4-3}
Let $G$ be a Métivier group with $(d_1,d_2)=(4,3)$. Suppose that $1\le p\le p_{3}=4/3$. If $s>d\left( 1/p - 1/2\right)=7\left( 1/p - 1/2\right)$, then there exists some $\varepsilon>0$ such that
\[
\Vert F^{(\iota)}(\sqrt L) \Vert_{p\to p} \le C_{p,s} 2^{-\varepsilon\iota} \norm{F^{(\iota)}}_{L^2_s} \quad\text{for all } \iota\in\N
\]
and any even bounded Borel function $F\in L^2_s$ supported in $[-2,-1/2]\cup[1/2,2]$.
\end{proposition}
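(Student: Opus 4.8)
The plan is to re-run the argument from the proof of \cref{prop:reduced} for $(d_1,d_2)=(4,3)$, but to supplement the first-layer estimate \cref{prop:weighted-plancherel-1} by a \emph{directional} second-layer weighted Plancherel estimate. Recall that in step~(5.b) of the proof of \cref{prop:reduced} the kernel $\mathcal K^{(\iota)}_\ell$ was localized only at the Euclidean scale $R_\ell R^\gamma\times R^2$, so Hölder's inequality had to be applied with the volume $R_\ell^{d_1}R^{2d_2}$, and this is precisely what forced $p\le\bar p_{4,3}=6/5$ through the condition $d_1-d_2-\tfrac{\theta_p}{2}q\ge 0$. The point is to gain factors in the second layer: if for some unit vector $v\in\g_2$ and every $\beta\ge 0$ one has
\[
\int_G\bigl|\,\langle v,u\rangle^\beta\,\mathcal K_{F(L)\chi(2^\ell U)}(x,u)\bigr|^2\,d(x,u)\le C_{A,\chi,\beta}\,2^{\ell(2\beta-d_2)}\norm{F}_{L^2_\beta}^2,
\]
then, by the same interpolation scheme as in \cref{lem:rapid-decay} with $|x|^N$ replaced by $\langle v,u\rangle^N$, one may assume $\mathcal K^{(\iota)}_\ell$ to be essentially supported, in the direction $v$, at the smaller scale $R_\ell R^{1+\gamma}$ rather than $R^2$. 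Having $e$ such independent good directions then permits gridding $\supp f$ in the second layer at scale $R_\ell R$ along those $e$ directions, so that Hölder's inequality is used with the smaller volume $R_\ell^{d_1+e}R^{2d_2-e}$; re-running the bookkeeping of step~(5.b) --- with the restriction estimate \eqref{eq:restriction-type-i}, the Sobolev bound \eqref{eq:norm}, and the slack coming from the open condition $s>d(1/p-1/2)$ --- one arrives at the condition $\theta_p q\le 2(d_1-d_2+e)$, which for $(d_1,d_2)=(4,3)$ holds throughout $1\le p\le p_3=4/3$ as soon as $e\ge 1$.

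So the task reduces to producing \emph{one} good direction, and this is where the spectral geometry of the $(4,3)$ case enters. For $(d_1,d_2)=(4,3)$ the skew-symmetric matrix $J_\mu$ is $4\times 4$, with characteristic polynomial $\lambda^4+a(\mu)\lambda^2+b(\mu)$ where $a(\mu)=\tfrac12\tr(J_\mu^*J_\mu)$ and $b(\mu)=\det J_\mu=\operatorname{Pf}(J_\mu)^2$. Since $G$ is Métivier, $a$ is positive definite and $\det J_\mu>0$ for $\mu\neq 0$, so the quadratic form $\operatorname{Pf}(J_\mu)$ has constant sign on $\g_2^*\setminus\{0\}$; normalizing it to be positive, the forms $Q_\pm(\mu):=a(\mu)\pm 2\operatorname{Pf}(J_\mu)$ are respectively positive definite and positive semidefinite (the latter because $a(\mu)^2-4b(\mu)=Q_-(\mu)Q_+(\mu)\ge 0$). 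Consequently the two positive eigenvalues of $-J_\mu^2$ are $b_1^\mu=\tfrac12(\sqrt{Q_+(\mu)}+\sqrt{Q_-(\mu)})$ and $b_2^\mu=\tfrac12(\sqrt{Q_+(\mu)}-\sqrt{Q_-(\mu)})$, and the set on which the data of \cref{prop:rotation} degenerate is $\Sigma:=\overline{\g_2^*\setminus\g_{2,r}^*}=\ker Q_-$, a \emph{linear} subspace of $\g_2^*\cong\R^3$, hence a point, a line, or a plane (if $\Sigma=\g_2^*$ then $b_1^\mu\equiv b_2^\mu$ and $G$ is of Heisenberg type, where \eqref{eq:intro-Plancherel-2} is already available). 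Writing $\sqrt{Q_-(\mu)}=\norm{\pi(\mu)}$ for a linear map $\pi$ with $\ker\pi=\Sigma$, one sees that directional $\mu$-derivatives along $\Sigma$ annihilate the singular factor $\norm{\pi(\mu)}$ and so keep $b_n^\mu$ as tame as in the Heisenberg type case, whereas transverse derivatives of order $j$ produce the homogeneous singularity $\norm{\pi(\mu)}^{1-j}$; via the resolvent formalism the same is true for $\mu\mapsto P_n^\mu$, while the contribution of $R_\mu$ is eliminated, as in the proof of \cref{prop:weighted-plancherel-1}, by the substitution $y=R_\mu^{-1}x$. Thus, choosing any unit $v\in\Sigma$ when $\Sigma\neq\{0\}$ (or any $v$ at all when $\Sigma=\{0\}$), \cite[Proposition~20]{MaMue14b} --- which reduces second-layer weighted Plancherel estimates to precisely such $\mu$-derivative bounds on the $b_n^\mu$ and $P_n^\mu$ --- yields the displayed estimate for that $v$ and all $\beta\ge 0$.

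With this ingredient in place, the remainder follows the scheme of \cref{sec:1st-layer}. I would: (i) prove the analogue of \cref{lem:rapid-decay} giving rapid decay of $\mathcal K^{(\iota)}_\ell$ outside a box enlarged by a factor $R^\gamma$ only in the first layer and in the good central direction(s), checking that the group twist $\tfrac12[x',x]$ --- whose projection onto any fixed central direction is $\lesssim R_\ell R^{1+\gamma}$ once the first layer is localized at scale $R_\ell R^\gamma$ --- is compatible with the finer box; (ii) decompose $\supp f\subseteq B_R^{d_{\mathrm{CC}}}(0)$ into boxes of size $R_\ell$ in $\g_1$, $R_\ell R$ along the $e$ good directions of $\g_2$, and $R^2$ along the remaining $d_2-e$ central directions, the enlarged boxes having overlap bounded by a constant up to an $R^{C\gamma}$ factor; (iii) apply Hölder's inequality with the box volume $R_\ell^{d_1+e}R^{2d_2-e}$ followed by \eqref{eq:restriction-type-i} and \eqref{eq:norm}, and sum the resulting geometric series over $\ell$ and $m$ exactly as in step~(5) of the proof of \cref{prop:reduced}, choosing $\gamma>0$ small and $\tilde s>1/2$ close to $1/2$ to absorb all losses using the strict inequality $s>7(1/p-1/2)$.

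The main obstacle, and the genuinely new part, is the directional second-layer weighted Plancherel estimate: one must identify the singular set $\Sigma$ of the spectral data of \cref{prop:rotation} as a linear subspace in the $(4,3)$ case (the Pfaffian-definiteness computation above), verify to all orders that directional $\mu$-derivatives along $\Sigma$ of $b_n^\mu$ and $P_n^\mu$ --- together with whatever dependence on $R_\mu$ survives the change of variables --- meet the hypotheses of \cite[Proposition~20]{MaMue14b}, and confirm that a single good direction already suffices to reach the endpoint $p=p_3=4/3$ (with the borderline inequality $\theta_p q=2(d_1-d_2+1)$ saved by the $\varepsilon$ of the open condition on $s$). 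By contrast, the geometric decomposition and the summation in step~(iii) are routine adaptations of \cref{sec:1st-layer}.
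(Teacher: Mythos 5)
Your overall strategy coincides with the paper's own: establish a \emph{directional} second-layer weighted Plancherel estimate (with weight $\langle u,v\rangle^\beta$ rather than $|u|^\beta$), use it in a rapid-decay lemma to refine the box decomposition to scale $R_\ell\times R_\ell R\times R^2$ (first layer, good central direction, remaining central directions), and rerun the summation of step~(5) with the extra gain of $R_\ell/R$. Your condition $e\ge 1$ and the endpoint numerology match the paper's. Your derivation that the singular set $\Sigma$ is a \emph{linear} subspace of $\g_2^*$ is a genuinely different and arguably more elementary argument: you observe $Q_- = a - 2\operatorname{Pf}(J_\mu)$ is a positive semidefinite quadratic form (equal to $(b_1^\mu-b_2^\mu)^2$) so $\Sigma=\ker Q_-$ is automatically linear, whereas the paper instead writes the three-dimensional subspace $V\subseteq\so_4$ as $\{(\xi,A\xi)\}$ using $\so_4\cong\su_2\oplus\widetilde{\su_2}$ and identifies $\Sigma$ with $\ker A$. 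Both are correct; the Pfaffian argument is cleaner for identifying $\Sigma$.

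The part that is not established, however, is the verification of the derivative bounds $\|D^\alpha P_n^\mu\|\lesssim 1$ for $D=|\mu|\partial_v$. You invoke ``the resolvent formalism,'' but near $\Sigma$ the spectral gap $b_1^\mu-b_2^\mu$ closes, so any contour separating the two eigenvalues shrinks and the resolvent on it degenerates; equivalently, the Lagrange interpolation formula $P_1^\mu=\bigl((-J_\mu^2)-(b_2^\mu)^2\bigr)/\bigl((b_1^\mu)^2-(b_2^\mu)^2\bigr)$ is a $0/0$ indeterminacy at $\Sigma$, and it is not immediate that $\partial_v$-derivatives stay bounded. What makes this work in the paper is precisely the byproduct of the $\so_4\cong\su_2\oplus\widetilde{\su_2}$ decomposition that your Pfaffian argument bypasses: it yields the \emph{factored} formula in \cref{eq:projections}, namely $P_j(\xi)=\tfrac12\id\mp\tfrac12\,J^+(A\xi)|A\xi|^{-1}\,J^-(\xi)|\xi|^{-1}$, in which the singular factor $J^+(A\xi)|A\xi|^{-1}$ is literally constant along $v\in\ker A$, so $\partial_v$ only ever hits the smooth factor $J^-(\xi)|\xi|^{-1}$. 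You would need to recover an analogous factorization from your viewpoint; the resolvent formalism alone does not deliver it. Two minor bookkeeping remarks: your condition $\theta_p q\le 2(d_1-d_2+e)$ mixes the $\theta_p$-interpolated estimate of \cref{thm:restriction-type} with the weaker \eqref{eq:restriction-type-i} that you then say you will apply --- the paper's proof of \cref{prop:4-3} uses only the weaker one and obtains $q\le 2(d_1-d_2+1)$, which agrees with yours at $p=p_3$ since $\theta_{p_3}=1$; and $R_\mu$ is removed not by a substitution inside the weighted integral (that trick is for the \emph{first}-layer weight in \cref{prop:weighted-plancherel-1}) but by switching to the representation \cref{eq:conv-kernel-alt}/\cref{eq:V-tilde} of the kernel, which never mentions $R_\mu$; this is what \cref{lem:GAFA-adapted} is built on.
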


Additionally to the first layer weighted Plancherel estimate \cref{eq:weighted-plancherel-1} we need the following Plancherel estimate with weight on the second layer.

\begin{proposition}\label{prop:weighted-plancherel-2}
Let $G$ be a Métivier group with layers of dimensions $d_1=4$ and $d_2=3$. Then there is a direction $v\in \g_2$ with $|v|=1$ such that, if $F:\R\to\C$ is a smooth function supported in a compact subset $A\subseteq (0,\infty)$ and $\chi:(0,\infty)\to\C$ is some bump function, the convolution kernel $\mathcal K_{F(L)\chi(2^\ell U)}$ of $F(L)\chi(2^\ell U)$ satisfies
\begin{equation}\label{eq:weighted-plancherel-2}
\int_G \big| \langle u,v\rangle^\alpha \mathcal K_{F(L)\chi(2^\ell U)}(x,u)\big|^2 \,d(x,u) \le C_{A,\chi,\alpha}\, 2^{\ell(2\alpha-d_2)} \norm{F}_{L^2_\alpha}^2
\end{equation}
for all $\alpha\ge 0$ and $\ell\in\Z$.
\end{proposition}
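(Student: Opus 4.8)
The plan is to follow the pattern of the proof of \cref{prop:weighted-plancherel-1}, reducing by interpolation to $\alpha\in\N$, for which the weight $\langle u,v\rangle^\alpha$ becomes, after a Fourier transform in the central variable, an $\alpha$-fold directional derivative in $\mu$. Since $(d_1,d_2)=(4,3)$ and $G$ is a Métivier group, $J_\mu$ is invertible for $\mu\neq 0$, so $r_0=0$ in \cref{prop:rotation}; moreover, on $\g_{2,r}^*$ either $N=1$ and $r_1=2$, which forces $-J_\mu^2=(b_1^\mu)^2\id_{\g_1}$, i.e.\ the Heisenberg type case, or $N=2$ and $r_1=r_2=1$ with $P_1^\mu+P_2^\mu=\id_{\g_1}$, and only the latter needs to be treated. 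Here I would use that the rescaled Laguerre functions $\varphi_{k}^{(\lambda,1)}$ on $\R^2$ are radial: $\varphi_{k_n}^{(b_n^\mu,1)}(R_\mu^{-1}P_n^\mu x)$ depends on $x$ only through $\langle P_n^\mu x,x\rangle=|P_n^\mu x|^2$, so the Borel measurable rotations $R_\mu$ of \cref{prop:rotation} disappear, and one may write $\varphi_{k_n}^{(b_n^\mu,1)}(R_\mu^{-1}P_n^\mu x)=g_{k_n}(b_n^\mu,\langle P_n^\mu x,x\rangle)$ for a smooth profile $g_k$. By Plancherel's theorem in $u$, the left-hand side of \cref{eq:weighted-plancherel-2} is then comparable to
\[
\int_{\g_1}\int_{\g_{2,r}^*}\bigg|\partial_v^\alpha\Big(\sum_{\mathbf k\in\N^2}F(\eigvp{k}{\mu})\,\chi(2^\ell|\mu|)\prod_{n=1}^2 g_{k_n}\big(b_n^\mu,\langle P_n^\mu x,x\rangle\big)\Big)\bigg|^2\,d\mu\,dx,
\]
where $\partial_v$ denotes the directional derivative in $\mu$ along the direction corresponding to $v$ via $\langle\cdot,\cdot\rangle$.

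I would next pin down the direction $v$. With $b_1^\mu\ge b_2^\mu$, the complement $\g_2^*\setminus\g_{2,r}^*$ is the zero set of the discriminant
\[
D(\mu)=\big((b_1^\mu)^2+(b_2^\mu)^2\big)^2-4(b_1^\mu)^2(b_2^\mu)^2=\tfrac14\big(\tr J_\mu^2\big)^2-4\det J_\mu,
\]
a nonnegative homogeneous polynomial of degree $4$ on $\g_2^*\cong\R^3$. Since $G$ is not of Heisenberg type, $D\not\equiv 0$, and a classification of nonnegative ternary quartic forms that can arise here (this is where the low dimension $(4,3)$ is genuinely used) shows that the cone $\{D=0\}$ is a plane or a line. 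I then fix a unit vector $v\in\g_2$ whose image in $\g_2^*$ lies in $\{D=0\}$.

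The heart of the proof will be the estimate that, for this $v$ and for all $\beta\le\alpha$ and $\mu\neq 0$,
\[
|\partial_v^\beta b_n^\mu|\lesssim|\mu|^{1-\beta},\qquad |\partial_v^\beta\langle P_n^\mu x,x\rangle|\lesssim|\mu|^{-\beta}|x|^2,
\]
uniformly in $x$ and up to the singular set. Here one would use $(b_n^\mu)^2=\tfrac12\big((b_1^\mu)^2+(b_2^\mu)^2\pm\sqrt D\big)$, with $(b_1^\mu)^2+(b_2^\mu)^2$ a polynomial, together with $P_1^\mu-P_2^\mu=M_\mu/\big((b_1^\mu)^2-(b_2^\mu)^2\big)$, where $M_\mu=-2J_\mu^2-\big((b_1^\mu)^2+(b_2^\mu)^2\big)\id_{\g_1}$ is a polynomial matrix vanishing on $\{D=0\}$. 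If $L$ is a linear form with $\{D=0\}\subseteq\ker L$, then $\partial_v L=0$ since $v\in\ker L$; expressing $\sqrt D$ and $M_\mu/\big((b_1^\mu)^2-(b_2^\mu)^2\big)$ as powers of $L$ times regular factors, one sees that differentiating along $v$ worsens neither the order to which $\sqrt D$ degenerates along $\ker L$ nor the order of the pole of $P_n^\mu$ there, which yields the two bounds (the Métivier condition $b_n^\mu\sim|\mu|>0$ entering the first). Granting this, I would expand $\partial_v^\alpha$ in the integral above by the Leibniz and Faà di Bruno rules: a derivative hitting $F(\eigvp{k}{\mu})$ produces some $F^{(j)}(\eigvp{k}{\mu})$ with $j\le\alpha$, times sums of factors $(2k_n+r_n)\partial_v^\beta b_n^\mu$; a derivative hitting $\chi(2^\ell|\mu|)$ is harmless; a derivative hitting a Laguerre factor produces $\partial_1 g_{k_n}$ or $\partial_2 g_{k_n}$ times $\partial_v^\beta b_n^\mu$ respectively $\langle(\partial_v^\beta P_n^\mu)x,x\rangle$. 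Using $b_n^\mu\sim|\mu|\sim 2^{-\ell}$ and $2k_n+r_n\sim 2^\ell$ on the support of the multipliers, the sub-elliptic Hermite estimate \cref{eq:sub-elliptic} to absorb the resulting $|x|^2$-weights against the Laguerre functions (exactly as in the proof of \cref{prop:weighted-plancherel-1}), and orthogonality on $\g_1$, each of the at most $\alpha$ differentiations costs a factor $\lesssim 2^\ell$, while the $\le\alpha$ derivatives of $F$ that appear are precisely what replaces $\norm{F}_{L^2}^2$ by $\norm{F}_{L^2_\alpha}^2$; the remaining polar-coordinate and change-of-variables computation then runs as in the proof of \cref{prop:weighted-plancherel-1} and gives $C\,2^{\ell(2\alpha-d_2)}\norm{F}_{L^2_\alpha}^2$.

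The main obstacle will be exactly the directional-derivative estimate above: that $v$ can be chosen \emph{inside} the singular cone $\{D=0\}$ so that \emph{all} derivatives $\partial_v^\beta b_n^\mu$ and $\partial_v^\beta P_n^\mu$ remain bounded, with the correct homogeneity, up to the singular set. This rests entirely on the "plane or line" structure of $\{D=0\}$, which is special to $(d_1,d_2)=(4,3)$; for $(8,6)$ and $(8,7)$ the set of singularities need no longer be a linear subspace, no such $v$ exists, and one must instead invoke the second layer weighted Plancherel estimates of \cite{MaMue14b}, as indicated in \cref{sec:sketch}.
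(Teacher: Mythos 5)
The overall shape of your plan matches the paper's: reduce to $\alpha\in\N$, pick a direction $v$ adapted to the singular cone in $\g_2^*$, prove that all $v$-derivatives of $b_n^\mu$ and $P_n^\mu$ remain controlled with the right homogeneity, and then run a Leibniz/Faà di Bruno expansion together with the sub-elliptic Hermite estimate. Two remarks on the route, and one genuine gap.

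On the route: the paper does not redo the Leibniz/Faà di Bruno bookkeeping by hand but instead invokes Proposition~19 of \cite{MaMue14b} (quoted as \cref{lem:GAFA-adapted}), which packages exactly the output you describe — a derivative of $F$, powers of $(k_n+1)b_n^\mu$, and extra weights absorbed by \cref{eq:sub-elliptic} — provided the hypotheses \cref{eq:mu-bounds} on $D=|\mu|\partial_v$ hold. Your hand expansion is legitimate in principle, but does require tracking the $\partial_1 g_{k_n}$ and $\partial_2 g_{k_n}$ terms and their $L^2$ sizes carefully; citing \cref{lem:GAFA-adapted} is tidier.

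The genuine gap is the claim that the singular cone $\{D=0\}$ is a plane or a line. You justify it by "a classification of nonnegative ternary quartic forms that can arise here", but no such classification delivers this on its own: $(\mu_1^2+\mu_2^2-\mu_3^2)^2$ is a nonnegative ternary quartic whose zero set is a quadric cone, not a linear subspace. What is actually needed is the extra algebraic structure special to $\so_4$. The paper's argument uses the decomposition $\so_4\cong\su_2\oplus\widetilde{\su_2}\cong\R^3_\xi\oplus\R^3_\eta$ and the explicit formulas $b_1=|\xi|+|\eta|$, $b_2=\big||\xi|-|\eta|\big|$ of \cite[Proposition~25]{MaMue14b}. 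The Métivier hypothesis (via $b_2\neq0$ on $V\setminus\{0\}$) forces the $3$-dimensional subspace $V\subseteq\so_4$ corresponding to $\g_2^*$ to be a graph, $V=\{(\xi,A\xi)\}$, and then $D(\mu)=16|\xi|^2|A\xi|^2$, so $\{D=0\}\setminus\{0\}=\ker A\setminus\{0\}$ is \emph{manifestly} a linear subspace. Without this factorization the "plane or line" claim is unsupported. Relatedly, your justification of $\|\partial_v^\beta P_n^\mu\|\lesssim|\mu|^{-\beta}$ via "$M_\mu/((b_1^\mu)^2-(b_2^\mu)^2)$ expressible as powers of $L$ times regular factors" is too vague: the paper's mechanism is concrete, namely that for $v\in\ker A$ one has $\partial_v(A\xi)=0$, so $\partial_v$ never touches the singular factor $J^+(A\xi)/|A\xi|$ in the formula \cref{eq:projections} for the projections, and only the smooth factor $J^-(\xi)/|\xi|$ gets differentiated. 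You would need to supply an argument of that precision.
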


We first show how the spectral multiplier estimate of \cref{prop:4-3} is derived from the weighted Plancherel estimate of \cref{prop:weighted-plancherel-2}.

\subsection{Proof of \texorpdfstring{\cref{prop:4-3}}{Proposition 10.1}}

In the proof of \cref{prop:reduced}, we used the weighted Plancherel estimates of \cref{prop:weighted-plancherel-1} to show that the convolution kernel
\begin{align*}
\mathcal K^{(\iota)}_\ell(x,u) = (2\pi)^{-d_2} \sum_{\mathbf{k}\in\N^N} \int_{\g_{2,r}^*} (F^{(\iota)}\psi)\big(\sqrt{\eigvp{k}{\mu}}\big) \chi(2^\ell |\mu|) \\
\times \prod_{n=1}^N \varphi_{k_n}^{(b_n^\mu,r_n)}(R_\mu^{-1} P_n^\mu x)\, e^{i\langle \mu, u\rangle} \, d\mu
\end{align*}
is essentially supported in a ball of size $R_\ell\times R^2$. On the other hand, if $\omega=\mu/|\mu|$, then $\eigvp{k}{\mu}=\left|\mu\right|\lambda_{\mathbf{k}}^{\omega}$, where $\lambda_{\mathbf{k}}^{\omega}\sim 2^\ell=R_\ell$ on the support of the above integrand. Thus, on a heuristic level, when thinking of using integration by parts in the above integral, the scaling of $\mu$ by $\lambda_{\mathbf{k}}^{\omega}\sim R_\ell$ suggests that one might hope that the convolution kernel is even supported in a ball of size $R_\ell \times 2^\ell R$ in place of $R_\ell \times R^2$, up to rapid decay. Since derivatives in $\mu$ correspond directly to weights in terms of $u$ via the Fourier transform, weighted Plancherel estimates are again a good tool to make this idea precise. The following lemma shows that the discussed phenomenon can at least be observed along the direction $v$ if the weighted Plancherel estimate \cref{eq:weighted-plancherel-2} is available.

\begin{lemma}\label{lem:rapid-decay-ii}
Suppose that $1\le p< 2$. Consider the radii $R=2^\iota$ and $R_\ell=2^\ell$ for $\iota\in\N$ and $\ell \in\{-\ell_0,\dots,\iota\}$. Suppose that $F:\R\to\C$ is an even bounded Borel function supported in $[-2,-1/2]\cup [1/2,2]$ and $\psi,\chi:(0,\infty)\to\C$ are bump functions. Let $\gamma>0$. Given $C,\tilde C>0$, consider the balls
\begin{align*}
A^{(\ell)} & = B_{C R_\ell}(0) \times B_{\tilde C R^2}(0) \subseteq  \g_1 \times \g_2 \quad\text{and} \\
\tilde A^{(\ell)} & = B_{2C R_\ell R^\gamma}(0) \times B_{9\tilde C R^{2+\gamma}}(0) \subseteq \g_1 \times \g_2.
\end{align*}
Let $v\in\g_2$ with $|v|=1$ be as in \cref{prop:weighted-plancherel-2} and $u^{(\ell)}_0\in\g_2$ be some point lying on the line along $v$.
Then there is a constant $\bar C>0$ depending only on $C,\tilde C$ and the Lie group~$G$ such that, if $f\in L^p(G)\cap L^2(G)$ is a function supported in the set
\[
A^{(\ell)}(u^{(\ell)}_0) = \{ (x,u) \in A^{(\ell)} : |\langle u- u^{(\ell)}_0,v\rangle| < \tilde C R_\ell R \},
\]
then the $L^p$-norm of $\mathbf{1}_{\tilde A^{(\ell)}} (F^{(\iota)}\psi)(\sqrt L) \chi(2^\ell U) f$ is rapidly decaying in $R$ outside the set
\[
\tilde A^{(\ell)}(u^{(\ell)}_0)  :=\{ (x,u) \in \tilde A^{(\ell)} : |\langle u- u^{(\ell)}_0,v\rangle| < 2\bar C R_\ell R^{1+\gamma} \}.
\]
That is, for any $N\in\N$, there is a constant $C_{N,\gamma,\psi,\chi}>0$ independent of $u^{(\ell)}_0$ such that
\begin{equation}\label{eq:rapid-decay-ii}
\big\|\mathbf{1}_{(\g\setminus\tilde A^{(\ell)}_0)\cap\tilde A^{(\ell)}} (F^{(\iota)}\psi)(\sqrt L) \chi(2^\ell U) f \big\|_p \le C_{N,\gamma,\psi,\chi} R^{-N} \|F^{(\iota)}\|_\infty \|f\|_p.
\end{equation}
\end{lemma}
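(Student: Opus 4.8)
The plan is to run the same $L^1$--$L^2$ interpolation scheme as in the proof of \cref{lem:rapid-decay}, but with the first-layer weighted Plancherel estimate replaced by the second-layer one, \cref{prop:weighted-plancherel-2}: the weight $\langle u,v\rangle^N$ will supply a factor $(R_\ell R^{1+\gamma})^{-N}$ which, once $N$ is chosen large in terms of $\gamma$, $p$, $d_1$, $d_2$, overwhelms all other powers of $R$. As usual one may assume $\iota$ (hence $R=2^\iota$) is large, since for bounded $\iota$ the claim follows from the trivial bound $\|(F^{(\iota)}\psi)(\sqrt L)\chi(2^\ell U)\|_{2\to 2}\lesssim_{\psi,\chi}\|F^{(\iota)}\|_\infty$ after absorbing $R^{-N}$ into the constant.

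The one genuinely new ingredient, and the step I expect to be the main point, is a separation estimate in the direction $v$ under the group law. Writing $\mathcal K_\ell$ for the convolution kernel of $(F^{(\iota)}\psi)(\sqrt L)\chi(2^\ell U)$, we have
\[ (F^{(\iota)}\psi)(\sqrt L)\chi(2^\ell U)f(x,u)=\int_G f(x',u')\,\mathcal K_\ell\big(x-x',\,u-u'-\tfrac12[x',x]\big)\,d(x',u'), \]
and, with $\mu\in\g_2^*$ the covector represented by $v$ under the inner product, $\langle[x',x],v\rangle=\mu([x',x])=\langle J_\mu x',x\rangle$. For $(x,u)\in(\g\setminus\tilde A^{(\ell)}_0)\cap\tilde A^{(\ell)}$ and $(x',u')\in\supp f\subseteq A^{(\ell)}(u^{(\ell)}_0)$ we have $|\langle u-u^{(\ell)}_0,v\rangle|\ge 2\bar C R_\ell R^{1+\gamma}$ and $|\langle u'-u^{(\ell)}_0,v\rangle|<\tilde C R_\ell R$, so $|\langle u-u',v\rangle|\ge\bar C R_\ell R^{1+\gamma}$ once $R$ is large (using $\gamma>0$); moreover $|x'|\lesssim R_\ell$ and $|x|\lesssim R_\ell R^\gamma$ on these sets, hence $|\langle[x',x],v\rangle|\le\|J_\mu\|\,|x'|\,|x|\lesssim_G R_\ell^2R^\gamma\le R_\ell R^{1+\gamma}$ because $R_\ell\le R$. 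Choosing $\bar C$ large enough in terms of $C,\tilde C$ and $\|J_\mu\|$ — i.e., in terms of $G$ — therefore gives
\[ \big|\langle u-u'-\tfrac12[x',x],\,v\rangle\big|\gtrsim R_\ell R^{1+\gamma}. \]
The decisive feature is that the commutator term $\tfrac12[x',x]$ is only of size $\lesssim R_\ell^2R^\gamma$: this is exactly what forces the first-layer radius of $\tilde A^{(\ell)}$ to be taken of order $R_\ell R^\gamma$ rather than $R$, and is the reason this lemma is used in tandem with \cref{lem:rapid-decay} (if $x$ ranged over a ball of radius $R$, the commutator would have size $\sim R_\ell R$, comparable to the separation up to a fatal factor $R^{-\gamma}$).

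Granting the separation estimate, the $L^1$-bound proceeds as in \cref{lem:rapid-decay}: first I would insert $1\le(R_\ell R^{1+\gamma})^{-N}|\langle u-u'-\tfrac12[x',x],v\rangle|^N$ into the convolution integral over $(\g\setminus\tilde A^{(\ell)}_0)\cap\tilde A^{(\ell)}$, substitute $(y,w)=(x-x',u-u'-\tfrac12[x',x])$ — a shear in $(x,u)$ of unit Jacobian for fixed $(x',u')$ — apply Cauchy--Schwarz over $(y,w)$ in a translate of $\tilde A^{(\ell)}$ (of volume $\lesssim R_\ell^{d_1}R^{2d_2}R^{\gamma(d_1+d_2)}$), and use Fubini, arriving at a bound by a constant times $(R_\ell R^{1+\gamma})^{-N}|\tilde A^{(\ell)}|^{1/2}\big(\int_G|\langle w,v\rangle^N\mathcal K_\ell(y,w)|^2\,d(y,w)\big)^{1/2}\|f\|_1$. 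Applying \cref{prop:weighted-plancherel-2} with the spectral multiplier $\lambda\mapsto(F^{(\iota)}\psi)(\sqrt\lambda)$ — smooth and compactly supported in $(0,\infty)$ since $\psi$ is supported away from the origin — and with $\alpha=N$ bounds the last integral by $\lesssim R_\ell^{2N-d_2}\|(F^{(\iota)}\psi)(\sqrt\cdot)\|_{L^2_N}^2$; and since $F^{(\iota)}$ is band-limited to $\{|\tau|\lesssim R\}$, Bernstein's inequality gives $\|(F^{(\iota)})^{(j)}\|_\infty\lesssim R^j\|F^{(\iota)}\|_\infty$, hence $\|(F^{(\iota)}\psi)(\sqrt\cdot)\|_{L^2_N}\lesssim_{\psi,N}R^N\|F^{(\iota)}\|_\infty$. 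Collecting the powers of $R$ and $R_\ell$ and using $R_\ell\le R$ yields an $L^1$-bound of the shape $C_{N,\psi}R^{-\gamma N+B}\|F^{(\iota)}\|_\infty\|f\|_1$ with $B=B(d_1,d_2,\gamma)$ fixed.

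Finally, the trivial $L^2$-bound $\|\mathbf{1}_{(\g\setminus\tilde A^{(\ell)}_0)\cap\tilde A^{(\ell)}}(F^{(\iota)}\psi)(\sqrt L)\chi(2^\ell U)f\|_2\lesssim_{\psi,\chi}\|F^{(\iota)}\|_\infty\|f\|_2$ is obtained by discarding the cut-offs and using the joint functional calculus, and interpolating it with the $L^1$-bound via the Riesz--Thorin theorem — then choosing $N$ large in terms of the desired decay order, $\gamma$, $p$, $d_1$, $d_2$ — delivers \eqref{eq:rapid-decay-ii}. Apart from the direction-$v$ separation estimate, every step is a cosmetic transcription of the proof of \cref{lem:rapid-decay}, with the second-layer weight $\langle u,v\rangle^N$ playing the role previously played by the first-layer weight $|x|^N$.
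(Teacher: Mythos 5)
Your proof is correct and follows essentially the same route as the paper's: $L^1$--$L^2$ interpolation, with the $L^1$ bound obtained from the direction-$v$ separation estimate (controlling $\langle u-u'-\tfrac12[x',x],v\rangle$ by bounding the commutator term via $|x'|\lesssim R_\ell$, $|x|\lesssim R_\ell R^\gamma$, and $\|J_\mu\|$, then choosing $\bar C$ large) combined with \cref{prop:weighted-plancherel-2}. Your added observation about why the first-layer radius must be $R_\ell R^\gamma$ rather than $R$ for the commutator to stay subordinate to the separation is a useful gloss on the structure of the argument, but the mechanics coincide with the paper's proof.
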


\begin{remark}
Note that $\|F^{(\iota)}\|_\infty \lesssim \smash{\|F^{(\iota)}\|_{L^2_\alpha}} \sim R^\alpha \|F^{(\iota)}\|_2$ for $\alpha>1/2$. Hence, due to the rapid decay in $R$ in \cref{eq:rapid-decay-ii}, we may also replace $\|F^{(\iota)}\|_\infty$ by $\|F^{(\iota)}\|_2$ on the right-hand side of \cref{eq:rapid-decay-ii}.
\end{remark}

\begin{proof}
Similar to \cref{lem:rapid-decay}, we interpolate between an $L^1$ and an $L^2$-estimate. For the $L^2$-estimate, we again just use that
\begin{equation}\label{eq:interpolation-L2}
\big\|\mathbf{1}_{(\g\setminus\tilde A^{(\ell)}(u_0^{(\ell)}))\cap\tilde A^{(\ell)}} (F^{(\iota)}\psi)(\sqrt L) \chi(2^\ell U) f \big\|_2
\lesssim_{\psi,\chi} \|F^{(\iota)}\|_\infty \|f\|_2.
\end{equation}
For the $L^1$-estimate, we exploit the weighted Plancherel estimate \cref{eq:weighted-plancherel-2}. We consider the sets
\[
\bar A_\ell(u') :=\{ (x,u) \in \tilde A^{(\ell)} : |\langle u-u',v\rangle| \ge \bar C R_\ell R^{1+\gamma} \},\quad u'\in \g_2.
\]
Then, if the points $(x,u),(x',u')\in G$ satisfy
\[
(x,u)\in (\g\setminus\tilde A^{(\ell)}(u_0^{(\ell)}))\cap\tilde A^{(\ell)}
\quad\text{and}\quad
(x',u')\in \supp f,
\]
we have
\[
|\langle u-u_0^{(\ell)},v \rangle| \ge 2\bar C R_\ell R^{1+\gamma} \quad\text{and}\quad |\langle u'-u_0^{(\ell)},v\rangle | < \tilde C R_\ell R,
\]
and, if we choose $\bar C\ge \tilde C$, in particular $(x,u)\in \bar A_\ell(u')$.
Using again \cref{prop:conv-kernel}, we write
\begin{align*}
 & (F^{(\iota)}\psi)(\sqrt L)\chi(2^\ell U) f(x,u)
 = f * \mathcal K_\ell^{(\iota)}(x,u) \\
& = \int_G f(x',u') \mathcal K_\ell^{(\iota)}\big((x',u')^{-1}(x,u)\big)\,d(x',u'),\quad (x,u)\in G.
\end{align*}
Hence, by Fubini's theorem,
\begin{align*}
& \big\|\mathbf{1}_{(\g\setminus\tilde A^{(\ell)}(u_0^{(\ell)}))\cap\tilde A^{(\ell)}} (F^{(\iota)}\psi)(\sqrt L) \chi(2^\ell U) f \big\|_1 \\
& \le \int_G \mathbf{1}_{(\g\setminus\tilde A^{(\ell)}(u_0^{(\ell)}))\cap\tilde A^{(\ell)}}(x,u)  |f * \mathcal K_\ell^{(\iota)}(x,u)| \,d(x,u) \\
& \le \int_{\supp f} |f(x',u')| \int_{\bar A_\ell(u')} \big|\mathcal K_\ell^{(\iota)}\big((x',u')^{-1}(x,u)\big)\big|\,d(x,u)\,d(x',u')\\
& \le \norm{f}_1 \sup_{(x',u')\in A^{(\ell)}} \int_{\bar A_\ell(u')} \big|\mathcal K_\ell^{(\iota)}\big((x',u')^{-1}(x,u)\big)\big|\,d(x,u).
\end{align*}
Note that
\[
(x',u')^{-1}(x,u)=(x-x',u-u'-\tfrac 1 2 [x',x]).
\]
If $(x',u')\in A^{(\ell)}$ and $(x,u)\in \bar A_\ell(u')$, we have
\[
|x'|< CR_\ell\quad\text{and}\quad |x|< 2CR_\ell R^\gamma,
\]
and thus, if we choose the constant $\bar C>0$ large enough,
\begin{align*}
|\langle u-u'-\tfrac 1 2 [x',x],v \rangle|
& \ge |\langle u-u',v \rangle| - \tfrac 1 2 C_0 |x'| |x|\\
& \ge \bar C R_\ell R^{1+\gamma} - C_0 C^2 R_\ell^2 R^\gamma \gtrsim  R_\ell R^{1+\gamma},
\end{align*}
where $C_0>0$ is the constant given by
\[
C_0:=\sup_{x,x'\neq 0} \frac{\left|[x,x']\right|}{|x||x'|}.
\]
Hence, if $(x',u')\in A^{(\ell)}$, then
\begin{align}
\int_{\bar A^{(\ell)}(u')} & \big|\mathcal K_\ell^{(\iota)}\big((x',u')^{-1}(x,u)\big)\big|\,d(x,u) \notag \\
& \lesssim (R_\ell R^{1+\gamma})^{-N} \int_{\tilde A^{(\ell)}} |\langle u-u'-\tfrac 1 2 [x',x],v\rangle|^N \notag \\
& \hspace{3cm} \times |\mathcal K_\ell^{(\iota)}(x-x',u-u'-\tfrac 1 2 [x',x])|\,d(x,u)  \notag \\
& \le (R_\ell R^{1+\gamma})^{-N} | \tilde A^{(\ell)}|^{1/2} \bigg(\int_G \big||\langle u,v\rangle|^N \mathcal K_\ell^{(\iota)}(x,u)\big|^2 \,d(x,u)\bigg)^{1/2}. \label{eq:apply-weighted-1}
\end{align}
Now the weighted Plancherel estimate \cref{eq:weighted-plancherel-2} yields
\[
\bigg(\int_G \big||\langle u,v\rangle|^N \mathcal K_\ell(x,u)\big|^2 \,d(x,u)\bigg)^{1/2} \lesssim_N R_\ell^{N-d_2/2} \norm{F^{(\iota)}}_{L^2_N}.
\]
Hence, the last line of \cref{eq:apply-weighted-1} is bounded by a constant times
\begin{equation}\label{eq:apply-weighted-2}
(R^{1+\gamma})^{-N} |\tilde A^{(\ell)}|^{1/2} R_\ell^{-d_2/2} \norm{F^{(\iota)}}_{L^2_N}.
\end{equation}
We use again the notation $\lesssim_\iota$ introduced in \eqref{eq:iota-notation}.
Since $|\tilde A^{(\ell)}|\lesssim_\iota R_\ell^{d_1}R^{2d_2}$ and $\norm{F^{(\iota)}}_{L^2_N}\sim R^N \norm{F^{(\iota)}}_2$, \cref{eq:apply-weighted-2} can be bounded by a constant $C_\iota\lesssim_\iota 1$ times
\[
R^{-\gamma N} R_\ell^{(d_1-d_2)/2} R^{d_2} \norm{F^{(\iota)}}_2.
\]
In particular, choosing $N=N(\gamma)$ large enough, we obtain
\begin{equation}\label{eq:interpolation-L1}
\big\|\mathbf{1}_{(\g\setminus\tilde A^{(\ell)}(u_0^{(\ell)}))\cap\tilde A^{(\ell)}} (F^{(\iota)}\psi)(\sqrt L) \chi(2^\ell U) f \big\|_1
 \lesssim_{N} R^{-N} \|F^{(\iota)}\|_2 \|f\|_1.
\end{equation}
Interpolation between the bounds \eqref{eq:interpolation-L1} and \eqref{eq:interpolation-L2} yields \cref{eq:rapid-decay-ii}.
\end{proof}

With the rapid decay from \cref{lem:rapid-decay-ii} at hand, we can now prove the spectral multiplier estimates of \cref{prop:4-3} under the assumption of \cref{prop:weighted-plancherel-2}.

\begin{proof}[Proof of \cref{prop:4-3}]
Let $R=2^\iota$ and $R_\ell=2^\ell$ for $\ell\in \{-\ell_0,\dots,\iota\}$, where $\ell_0$ is defined as in \cref{rem:ell_0}. Arguing as in the proof of \cref{prop:reduced}, it suffices to show \eqref{eq:main-term}, that is,
\begin{equation}\label{eq:main-term-b}
\bigg\Vert \sum_{\ell=-\ell_0}^\iota \sum_{m=1}^{M_{\ell}} \mathbf{1}_{\tilde B_m^{(\ell)}\cap B_{3R}^{d_{\mathrm{CC}}}(0)} F_\ell^{(\iota)}(L,U) (f|_{B_{m}^{(\ell)}}) \bigg\Vert_p
 \lesssim  2^{-\varepsilon\iota} \norm{F^{(\iota)}}_{L^2_s} \norm{f}_p,
\end{equation}
where $f|_{B_{m}^{(\ell)}}$ is the restriction of $f$ to the set
\[
B_{m}^{(\ell)} \subseteq B_{CR_\ell}(x_{m}^{(\ell)}) \times B_{CR^2}(0)\quad\text{with } |x_m^{(\ell)}|\le CR,
\]
and $F_\ell^{(\iota)}(L,U)(f|_{B_{m}^{(\ell)}})$ is restricted to
\[
\tilde B_{m}^{(\ell)}=B_{2C R_\ell R^\gamma}(x_{m}^{(\ell)}) \times B_{9 C R^2}(0).
\]
To prove \cref{eq:main-term-b}, it suffices to show
\begin{equation}\label{eq:main-term-c}
\big\| \mathbf{1}_{\tilde B_m^{(\ell)}\cap B_{3R}^{d_{\mathrm{CC}}}(0)} F_\ell^{(\iota)}(L,U) (f|_{B_{m}^{(\ell)}}) \big\|_p
 \lesssim  2^{-\varepsilon\iota} \norm{F^{(\iota)}}_{L^2_s} \big\|f|_{B_{m}^{(\ell)}}\big\|_p.
\end{equation}
Indeed, thanks to the bounded overlap of the balls $\tilde B_{m}^{(\ell)}$, given \cref{eq:main-term-c}, we obtain
\begin{align*}
& \bigg\| \sum_{\ell=-\ell_0}^\iota \sum_{m=1}^{M_{\ell}} \mathbf{1}_{\tilde B_m^{(\ell)}\cap B_{3R}^{d_{\mathrm{CC}}}(0)} F_\ell^{(\iota)}(L,U) (f|_{B_{m}^{(\ell)}}) \bigg\|_p^p \\
& \lesssim_\iota (\iota+1+\ell_0)^{p-1}  \sum_{\ell=-\ell_0}^\iota \sum_{m=1}^{M_{\ell}} \big\|  \mathbf{1}_{\tilde B_m^{(\ell)}\cap B_{3R}^{d_{\mathrm{CC}}}(0)} F_\ell^{(\iota)}(L,U) (f|_{B_{m}^{(\ell)}}) \big\|_p^p\\
& \lesssim (\iota+1+\ell_0)^{p-1}  \sum_{\ell=-\ell_0}^\iota \sum_{m=1}^{M_{\ell}} \big(2^{-\varepsilon\iota} \norm{F^{(\iota)}}_{L^2_s} \big\|f|_{B_{m}^{(\ell)}}\big\|_p\big)^p \\
& \lesssim \big( 2^{-\tilde \varepsilon\iota} \norm{F^{(\iota)}}_{L^2_s} \|f\|_p\big)^p\qquad \text{for } 0<\tilde\varepsilon<\varepsilon,
\end{align*}
where we use again the notation $\lesssim_\iota$ introduced in \eqref{eq:iota-notation}.

Let $\lambda_{(x_{m}^{(\ell)},0)}$ denote the left multiplication by $(x_{m}^{(\ell)},0)$ on $G$. Note that
\[
\lambda_{(x_{m}^{(\ell)},0)}(x,u)\in B_{m}^{(\ell)}
\]
implies $|x|\le C R_\ell$ and $\big|u+\tfrac 1 2 [x_{m}^{(\ell)},x]\big| \le C R^2$, and thus in particular
\[
|u| \lesssim R^2 + |x_{m}^{(\ell)}|\left|x\right| \lesssim R^2,
\]
meaning that $(x,u)$ is contained in the ball
\[
A^{(\ell)} := B_{C R_\ell}(0) \times B_{\tilde C R^2}(0),
\]
with $\tilde C>0$ chosen sufficiently large.
Similarly,
\[
\lambda_{(x_{m}^{(\ell)},0)}(x,u)\in \tilde B_{m}^{(\ell)}
\]
implies that $(x,u)$ lies in the ball 
\[
\tilde A^{(\ell)} := B_{2C R_\ell R^\gamma}(0) \times B_{9\tilde C R^{2+\gamma}}(0).
\]
Since the operator $F_\ell^{(\iota)}(L,U)$ is left-invariant, we have
\[
\big(F_\ell^{(\iota)}(L,U) (f|_{B_{m}^{(\ell)}})\big)(\lambda_{(x_{m}^{(\ell)},0)}(x,u)) = 
F_\ell^{(\iota)}(L,U) \tilde f_{m}^{(\ell)}(x,u),
\]
where $\tilde f_{m}^{(\ell)}(x,u) := f_{m}^{(\ell)}(\lambda_{(x_{m}^{(\ell)},0)}(x,u))$ is supported in the ball $A^{(\ell)}$. Hence, since
\[
\big\|\mathbf{1}_{\tilde B_m^{(\ell)}} F_\ell^{(\iota)}(L,U) (f|_{B_{m}^{(\ell)}}) \big\|_p
 = \|\mathbf{1}_{\tilde A^{(\ell)}} F_\ell^{(\iota)}(L,U) \tilde f_{m}^{(\ell)} \|_p,
\]
for proving \cref{eq:main-term-c}, it suffices to show
\begin{equation}\label{eq:main-term-d}
\norm{\mathbf{1}_{\tilde A^{(\ell)}} F_\ell^{(\iota)}(L,U) f}_p \lesssim 2^{-\varepsilon\iota} \norm{F^{(\iota)}}_{L^2_s}\norm{ f}_p
\end{equation}
for functions $f$ that are supported in $A^{(\ell)}$. 

Let $v\in\g_2$ with $|v|=1$ be the direction of \cref{prop:weighted-plancherel-2} along which the weighted Plancherel estimate \cref{eq:weighted-plancherel-2} holds. We decompose the second layer of $A^{(\ell)}$ into slabs of thickness $\sim R_\ell R$ aligned with $v$. More precisely, there are disjoint subsets $A^{(\ell)}_k$ such that
\[
A^{(\ell)} = \bigcup_{k=1}^{K_\ell} A^{(\ell)}_k
\]
and points $u^{(\ell)}_k\in\g_2$ (thought of as centers) that satisfy the following properties:
\begin{itemize}
\item $u^{(\ell)}_k$ lies on the line along $v$ for all $k\in\{1,\dots,K_\ell\}$,
\item $|u^{(\ell)}_k - u^{(\ell)}_{k'}| > R_\ell R/2$ for $k\neq k'$,
\item $\langle u- u^{(\ell)}_k,v\rangle < \tilde C R_\ell R$ for all $u\in A^{(\ell)}_k$ and all $k\in\{1,\dots,K_\ell\}$.
\end{itemize}
Moreover, we consider the slightly enlarged slabs
\[
\tilde A^{(\ell)}_k :=\{ (x,u) \in \tilde A^{(\ell)} : \langle u- u^{(\ell)}_k,v\rangle < 2\bar C R_\ell R^{1+\gamma} \},
\]
with $\bar C$ being the constant from \cref{lem:rapid-decay-ii}.
With this setup, the maximal number $N_\gamma$ of overlapping sets $\tilde A^{(\ell)}_k$ is bounded by $N_\gamma \lesssim_\iota 1$. Given a function $f$ supported in $A^{(\ell)}$, we decompose it as
\[
f = \sum_{k=1}^{K_\ell} f|_{A^{(\ell)}_k}.
\]
Moreover, we decompose the function $h_k^{(\ell)}:=\mathbf{1}_{\tilde A^{(\ell)}} F_\ell^{(\iota)}(L,U) ( f|_{A^{(\ell)}_k})$ as
\[
h_k^{(\ell)} = \mathbf{1}_{\tilde A^{(\ell)}_k} h_k^{(\ell)} + \mathbf{1}_{\g\setminus\tilde A^{(\ell)}_k} h_k^{(\ell)}.
\]
Since $K_\ell\lesssim 2^{\iota-\ell}\le 2^{\iota}$, the rapid decay from \cref{lem:rapid-decay-ii} yields
\[
\bigg\Vert \sum_{k=1}^{K_{\ell}} \mathbf{1}_{\g\setminus\tilde A^{(\ell)}_k} h_k^{(\ell)} \bigg\Vert_p
 \lesssim_N R^{-N} \norm{F^{(\iota)}}_2 \norm{f}_p,\quad N>0.
\]
Hence, to prove \cref{eq:main-term-d}, it remains to show
\[
\bigg\Vert \sum_{k=1}^{K_{\ell}}  \mathbf{1}_{\tilde A^{(\ell)}_k} h_k^{(\ell)} \bigg\Vert_p \lesssim 2^{-\varepsilon\iota} \norm{F^{(\iota)}}_{L^2_s} \norm{f}_p.
\]
Due to the bounded overlap of the balls $\tilde A^{(\ell)}_k$, we have
\[
\bigg\Vert \sum_{k=1}^{K_{\ell}}  \mathbf{1}_{\tilde A^{(\ell)}_k} h_k^{(\ell)} \bigg\Vert_p
\lesssim_\iota 
\bigg( \sum_{k=1}^{K_{\ell}} {\norm{ \mathbf{1}_{\tilde A^{(\ell)}_k} h_k^{(\ell)}}_p^p} \bigg)^{1/p}.
\]
Let $1/q=1/p-1/2$.
Since $s>d/q$ and $\norm{F^{(\iota)}}_{L^2_{d/q}}\sim 2^{d/q-s}\norm{F^{(\iota)}}_{L^2_s}$, it suffices to show
\begin{equation}\label{eq:error-main-ii}
\bigg(\sum_{k=1}^{K_{\ell}} {\norm{ \mathbf{1}_{\tilde A^{(\ell)}_k} h_k^{(\ell)}}_p^p} \bigg)^{1/p}
 \lesssim_\iota \norm{F^{(\iota)}}_{L^2_{d/q}} \norm{f}_p.
\end{equation}
Now, using Hölder's inequality in combination with the restriction type estimate of \cref{thm:restriction-type}, we get
\begin{align*}
\norm{\mathbf{1}_{\tilde A^{(\ell)}_k} h_k^{(\ell)}}_p
& \lesssim_\iota ( R_\ell^{d_1+1} R^{2d_2-1})^{1/q} \norm{F_\ell^{(\iota)}(L,U) ( f|_{A^{(\ell)}_k})}_2 \\
& \lesssim \bigg(R_\ell^{d_1-d_2} R^{2 d_2} \frac{R_\ell}{R} \bigg)^{1/q} \norm{(F^{(\iota)}\psi)(\sqrt\cdot\,)}_{2^{\ell},2} \norm{ f|_{A^{(\ell)}_k}}_p
\end{align*}
for all $\tilde s>1/2$.
By the estimate \cref{eq:norm} for the norm $\|\cdot\|_{M,2}$, we have
\[
\norm{(F^{(\iota)}\psi)(\sqrt\cdot\,)}_{2^{\ell},2}
\lesssim \norm{F^{(\iota)}\psi}_2 + 2^{-\ell \tilde s} \norm{F^{(\iota)}\psi}_{L^2_{\tilde s}}
\lesssim_\psi \Big(\frac{R_\ell}{R}\Big)^{-\tilde s} \norm{F^{(\iota)}}_2.
\]
Using $R^{d/q} \norm{F^{(\iota)}}_2\sim \norm{F^{(\iota)}}_{L^2_{d/q}}$, we obtain
\begin{align}
\sum_{k=1}^{K_{\ell}} \Vert \mathbf{1}_{\tilde A^{(\ell)}_k} h_k^{(\ell)}\Vert_p^p
& \lesssim_\iota \sum_{k=1}^{K_{\ell}} \bigg(R_\ell^{d_1-d_2} R^{2d_2}\Big(\frac{R_\ell}{R}\Big)^{-\tilde s q+1} R^{-d}\bigg)^{p/q} \norm{F^{(\iota)}}_{L^2_{d/q}}^p \norm{f|_{A^{(\ell)}_k}}_p^p \notag \\
& = \bigg(\Big(\frac{R_\ell}{R}\Big)^{d_1-d_2-\tilde sq+1}\bigg)^{p/q} \norm{F^{(\iota)}}_{L^2_{d/q}}^p \norm{f}_p^p. \label{eq:condition-enhanced}
\end{align}
Using \eqref{eq:bound-q} and $(d_1,d_2)=(4,3)$, we have
\[
d_1-d_2-\tilde s q + 1 \ge d_1-d_2-\tilde s (d_2+1) + 1 = 2 - 4 \tilde s.
\]
Since the last term gets arbitrarily close to zero if we choose $\tilde s> 1/2$ sufficiently close to $1/2$, we obtain \cref{eq:error-main-ii}.
\end{proof}

\begin{remark}\label{rem:extension-H-type}
Having even a second layer weighted Plancherel estimate of the form
\begin{equation}\label{eq:full-plancherel}
\int_G \big| |u|^\alpha \mathcal K_{F(L)\chi(2^\ell U)}(x,u)\big|^2 \,d(x,u) \le C_{\alpha,\chi}\, 2^{\ell(2\alpha-d_2)} \norm{F}_{L^2_\alpha}^2
\end{equation}
in place of \cref{eq:weighted-plancherel-2} available, one could adapt the above arguments and pass to even smaller balls of size $R_\ell \times R_\ell R$ in $\g_1\times \g_2$ instead of just slicing the second layer $\g_2$ into slabs of thickness $R_\ell R$ along the direction $v$. This adaptation would raise the exponent $d_1-d_2-\tilde s q +1$ in \cref{eq:condition-enhanced} to $d_1-\tilde s q$, which is always positive if $\tilde s>1/2$ is chosen sufficiently close to $1/2$, since
\[
d_1-\tilde s q \ge d_1 - \tilde s (d_2+1)\quad \text{and} \quad d_1>d_2.
\]
For instance, Plancherel estimates of the form \cref{eq:full-plancherel} are known to hold for the Heisenberg--Reiter type groups considered in \cite{Ma15}, which in particular include direct products of Heisenberg type groups. Thus, using directly the estimates of \cite{Ma15}, one could prove the spectral multiplier estimates of \cref{thm:multiplier} for direct products of Heisenberg type groups in the range $1\le p\le 2(d_2+1)/(d_2+3)$ by adapting the proof of \cref{prop:4-3}, which would especially improve the range of $p$ for the cases $(d_1,d_2)=(8,6)$ and $(d_1,d_2)=(8,7)$.
\end{remark}

\subsection{Proof of the weighted Plancherel estimate}

It remains to show the  weighted Plancherel estimate \cref{eq:weighted-plancherel-2} of \cref{prop:weighted-plancherel-2}, that is,
\[
\int_G \big| \langle u,v\rangle^\alpha \mathcal K_{F(L)\chi(2^\ell U)}(x,u)\big|^2 \,d(x,u) \le C_{A,\chi,\alpha}\, 2^{\ell(2\alpha-d_2)} \norm{F}_{L^2_\alpha}^2
\]
for all $\alpha\ge 0$ and $\ell\in\Z$. To that end, we use the following lemma, which is a special case of Proposition~19 in \cite{MaMue14b}. We apply the lemma in the case where $G$ is a Métivier group with layers of dimensions $d_1=4$ and $d_2=3$, but the statement actually holds for any Métivier group. Note that the subsequent proof of \cref{prop:weighted-plancherel-2} also works for any Métivier group. The crucial point is the availability of the bounds \cref{eq:mu-bounds} assumed in \cref{lem:GAFA-adapted}.

We use again the notation of \cref{prop:rotation}. In particular, $\g_{2,r}^*\subseteq\g_2$ is the homogeneous Zariski-open subset of \cref{prop:rotation}, $b_1^\mu,\dots,b_N^\mu$ are the non-negative eigenvalues of the matrix $iJ_\mu$, and $P_1^\mu,\dots,P_N^\mu$ are the corresponding spectral projections of \cref{eq:spectral-decomp-0}.

\begin{lemma}\label{lem:GAFA-adapted}
Let $D$ be a smooth vector field on $\g_{2,r}^*$, thought of as a first-order differential operator in the variable $\mu \in \g_{2,r}^*$.
Suppose that there exist $\kappa >0$ and $\alpha_0\in\N$ such that
\begin{equation}\label{eq:mu-bounds}
\begin{gathered}
|D^\alpha b_1^\mu| \leq \kappa b_1^\mu, \dots, |D^\alpha b_N^\mu| \leq \kappa b_N^\mu, \\
\|D^\alpha P_1^\mu\| \leq \kappa,\dots,\|D^\alpha P_N^\mu\| \leq \kappa
\end{gathered}
\end{equation}
for all $\mu \in \g_{2,r}^*$ and all $\alpha\in\{0,\dots,\alpha_0\}$. Suppose further that $F:\R\to\C$ and $\psi:\R^{d_2}\to\C$ are bump functions. Let $V_{F\otimes \psi}$ defined be as in \cref{eq:V-tilde}, that is,
\[
V_{F\otimes \psi}(\xi,\mu) = \sum_{k \in \N^N} F(|P^\mu_0 \xi|^2+\eigvp{k}{\mu})\psi(\mu) \prod_{n=1}^N \mathcal L_{k_n}^{(r_n-1)}(|P^\mu_n \xi|^2 /b^\mu_n)
\]
with eigenvalues $\eigvp{k}{\mu}=\sum_{n=1}^N \left(2k_n+r_n\right)b_n^\mu$ and Laguerre functions $\mathcal L_{k_n}^{(r_n-1)}$. Then
\begin{multline*}
\int_{\g_1} |D^\alpha V_{F\otimes \psi}(\xi,\mu)|^2 \, d\xi 
\leq C_{\kappa,\alpha} \sum_{\iota \in I_{\alpha}} \int_{R_\iota} \sum_{\mathbf{k}\in \N^N} |D^{\delta_\iota} \psi(\mu)|^2 \\
\times \bigg| F^{(\gamma_\iota)}\bigg(\sum_{n=1}^N \left(2(k_n+s_n)+r_n\right) b_n^\mu \bigg)\bigg|^2 \,
\bigg| \prod_{n=1}^N (b^\mu_n)^{1+a^\iota_n} (k_n+1)^{a^\iota_n} \bigg| \,d\nu_{\iota}(s),
\end{multline*}
for all $\mu \in \g_{2,r}^*$ and all $\alpha\in \{0,\dots,\alpha_0\}$, where $I_{\alpha}$ is a finite set and, for all $\iota \in I_{\alpha}$,
\begin{itemize}
\item $a^\iota \in \N^N$, $\gamma_\iota,\delta_\iota \in \N$, $\gamma_\iota + \delta_\iota  \leq \alpha$,
\item $R_\iota = \prod_{n=1}^N [ 0,\beta^\iota_n ]$ and $\nu_\iota$ is a Borel probability measure on $R_\iota$,
\end{itemize}
and where $F^{(\gamma_\iota)}$ denotes the $\gamma_\iota$-th derivative of $F$.
\end{lemma}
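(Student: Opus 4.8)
This is the specialisation of \cite[Proposition~19]{MaMue14b} to the present setting: since $G$ is a Métivier group the matrices $J_\mu$ are invertible for $\mu\neq 0$, so $r_0=0$, $P_0^\mu=0$, and the factor $F(|P_0^\mu\xi|^2+\cdots)$ loses its $\xi$-dependence outside the Laguerre blocks; moreover $\sum_{n=1}^N P_n^\mu=\id_{\g_1}$. The plan is to prove it by induction on $\alpha$, carrying along a \emph{pointwise} representation of $D^\alpha V_{F\otimes\psi}(\xi,\mu)$ as a finite sum, over an index set $I_\alpha$, of elementary terms of the form
\[
c_\iota(\xi,\mu)\, D^{\delta_\iota}\psi(\mu)\int_{R_\iota}\sum_{\mathbf k\in\N^N} F^{(\gamma_\iota)}\Bigl(\sum_{n=1}^N(2(k_n+s_n)+r_n)b_n^\mu\Bigr)\prod_{n=1}^N q_{\iota,n}\!\Bigl(\tfrac{|P_n^\mu\xi|^2}{b_n^\mu}\Bigr)\,\mathcal L^{(r_n-1+t_{\iota,n})}_{k_n+\lfloor s_n\rfloor}\!\Bigl(\tfrac{|P_n^\mu\xi|^2}{b_n^\mu}\Bigr)\,d\nu_\iota(s),
\]
with $\gamma_\iota+\delta_\iota\le\alpha$, fixed polynomials $q_{\iota,n}$, type shifts $t_{\iota,n}\in\N$, probability measures $\nu_\iota$ on fixed boxes $R_\iota=\prod_n[0,\beta^\iota_n]$, and a coefficient $c_\iota$ which is a product of powers of the $b_n^\mu$ together with at most linearly many of the off-diagonal bilinear forms $\langle (D^jP_n^\mu)P_m^\mu\xi,P_n^\mu\xi\rangle$ ($m\neq n$). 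For $\alpha=0$ this is just the definition of $V_{F\otimes\psi}$, with $I_0$ a singleton, $\gamma_0=\delta_0=0$, $t_{0,n}=0$, $q_{0,n}\equiv1$, $c_0\equiv1$, and $\nu_0$ the Dirac mass at the origin.

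For the inductive step I would apply one more $D$ and distribute it by the Leibniz rule over the factors $\psi$, $F^{(\gamma_\iota)}(\cdots)$, the $b_n^\mu$ (in the denominators and in the argument of $F^{(\gamma_\iota)}$), and the $P_n^\mu$ inside $|P_n^\mu\xi|^2=\langle P_n^\mu\xi,\xi\rangle$. Hitting $\psi$ raises $\delta_\iota$. Hitting $F^{(\gamma_\iota)}$ raises $\gamma_\iota$ and pulls out $\sum_n(2(k_n+s_n)+r_n)Db_n^\mu$, which by the first line of \cref{eq:mu-bounds} is dominated by $\kappa\sum_n(2(k_n+s_n)+r_n)b_n^\mu$ and hence is $O_A(1)$ on $\supp F^{(\gamma_\iota)}$. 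Hitting a $b_n^\mu$ uses $|Db_n^\mu|\le\kappa b_n^\mu$ together with the classical Laguerre identities from \cite[\S1.3]{Th93}: the three-term recurrence expressing $t\,\mathcal L^{(a)}_k(t)$ through $\mathcal L^{(a)}_{k\pm1},\mathcal L^{(a)}_k$ with coefficients $O(k)$, the identity for $\partial_t\mathcal L^{(a)}_k$, and the type shift $\mathcal L^{(a)}_k=\mathcal L^{(a+1)}_k-\mathcal L^{(a+1)}_{k-1}$; iterating these packages each $\mu$-derivative of a rescaled Laguerre function as an average, over a bounded shift parameter, of Laguerre functions of shifted index and type, which is exactly what the measures $\nu_\iota$ on $R_\iota$ record, and it produces the polynomial factors $q_{\iota,n}$ and the $O(k_n)$ coefficients absorbed into $c_\iota$. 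Hitting $P_n^\mu$ produces $\langle(DP_n^\mu)\xi,\xi\rangle$; differentiating $(P_n^\mu)^2=P_n^\mu$ gives $P_n^\mu(DP_n^\mu)P_n^\mu=0$, so $DP_n^\mu$ is block off-diagonal and $\langle(DP_n^\mu)\xi,\xi\rangle$ is a sum of off-diagonal forms $\langle(DP_n^\mu)P_m^\mu\xi,P_n^\mu\xi\rangle$, $m\neq n$, each bounded by $\kappa|P_m^\mu\xi|\,|P_n^\mu\xi|$ by the second line of \cref{eq:mu-bounds}; these feed into $c_\iota$.

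Finally I would pass to the $L^2_\xi$ bound. At fixed $\mu$ one has the orthogonal decomposition $\g_1=\bigoplus_{n=1}^N\ran P_n^\mu$, and each elementary term depends on $\xi$ only through the $|P_n^\mu\xi|^2$ and the off-diagonal forms above; substituting $w_n=(b_n^\mu)^{-1/2}P_n^\mu\xi$ on each block turns $\int_{\g_1}|D^\alpha V_{F\otimes\psi}|^2$ into a product of integrals over the $\R^{2r_n}$. In this expansion the $\mathbf k\neq\mathbf k'$ cross terms that carry a bilinear form odd in some $w_n$ vanish by parity, while the surviving off-diagonal contributions are controlled by $\langle(DP_n^\mu)P_m^\mu\xi,P_n^\mu\xi\rangle^2\le\kappa^2|P_m^\mu\xi|^2|P_n^\mu\xi|^2$, i.e.\ by factorised terms with extra weights $|w_n|^2$. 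Orthogonality of the rescaled Laguerre functions on $\R^{2r_n}$ \cite[Thm.~1.3.2]{Th93} then diagonalises the $\mathbf k$-sum, and the weighted $L^2$-norms $\int_{\R^{2r_n}}|\cdot|^{2j}|\mathcal L^{(r_n-1+t)}_k(|\cdot|^2)|^2$, computed as in the proof of \cref{prop:weighted-plancherel-1} from $\|\varphi^{(\lambda,m)}_k\|_{L^2}^2\sim\lambda^m(k+1)^{m-1}$, produce exactly the weights $\prod_n(b_n^\mu)^{1+a^\iota_n}(k_n+1)^{a^\iota_n}$ with $a^\iota_n\ge r_n-1$, absorbing the $O(k_n)$ factors and $b_n^\mu$-powers from $c_\iota$ (using that $\eigvp{k}{\mu}\sim1$ on the relevant range). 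Tracking the number of applications of $D$ preserves $\gamma_\iota+\delta_\iota\le\alpha$, and $I_\alpha$ is finite because of the finitely many branchings. I expect the main obstacle to be precisely this bookkeeping: certifying that iterating the Laguerre identities produces index and type shifts that package uniformly in $\mu$ as the measures $\nu_\iota$ on the fixed boxes $R_\iota$, and that the $k_n$-growth of the recurrence coefficients is matched exactly by the decay of the Laguerre $L^2$-norms so that the exponents $a^\iota_n$ stay finite — this is the content of \cite[Proposition~19]{MaMue14b}, to which one may ultimately appeal.
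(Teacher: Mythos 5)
Your proposal matches the paper's treatment: the paper gives no proof of this lemma beyond the remark that it is "a special case of Proposition~19 in \cite{MaMue14b}," and you correctly identify this and appeal to the same source. Your additional sketch of the mechanism (induction on $\alpha$; Leibniz distribution of $D$ over $\psi$, $F$, the $b_n^\mu$, and $P_n^\mu$; the observation that $(P_n^\mu)^2=P_n^\mu$ and $P_m^\mu P_n^\mu=0$ force $DP_n^\mu$ to be block off-diagonal; packaging of the Laguerre recurrence/type-shift identities into the averaging measures $\nu_\iota$ on $R_\iota$; and the final diagonalisation by Laguerre orthogonality on each block $\R^{2r_n}$) is a faithful account of what is in \cite[Proposition~19]{MaMue14b} and is sound.
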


We will show that the bounds of \eqref{eq:mu-bounds} hold with
\[
D = \left|\mu\right|\partial_v
\]
for some directional derivative $\partial_v$ on $\g_2^*$.
Before verifying these bounds, we show how the weighted Plancherel estimate \eqref{eq:weighted-plancherel-2} follows from \cref{lem:GAFA-adapted}, which is an adaptation of the arguments of \cite[Proof of Proposition 15]{MaMue16}.

\begin{proof}[Proof of \cref{prop:weighted-plancherel-2}]
Suppose that the bounds \cref{eq:mu-bounds} hold with $D = \left|\mu\right|\partial_v$. We apply \cref{lem:GAFA-adapted} with $\psi(\mu)=\chi(2^\ell|\mu|)$. In view of \cref{eq:conv-kernel-alt}, the Plancherel theorem yields
\begin{align}\label{eq:Plancherel-V}
& \int_G | \langle u,v\rangle^\alpha \mathcal K_{F(L)\chi(2^\ell U)}(x,u) |^2 \, d(x,u) \notag \\
& \sim \int_{\g_{2,r}^*} \int_{\g_1} | \partial_v^\alpha V_{F\otimes \psi}(\xi,\mu)|^2\,d\xi \, d\mu \notag \\
& = \int_{\g_{2,r}^*} |\mu|^{-2\alpha} \int_{\g_1} | D^\alpha V_{F\otimes \psi}(\xi,\mu)|^2\,d\xi \, d\mu .
\end{align}
In the following, we write
\[
(2(\textbf{k}+\textbf{s})+\textbf{r}) \cdot\textbf{b}^\mu = \sum_{n=1}^N \left(2(k_n+s_n)+r_n\right) b_n^\mu.
\]
Then, using \cref{lem:GAFA-adapted}, the right-hand side of \cref{eq:Plancherel-V} can be bounded by
\begin{align}
\sum_{\iota \in I_{\alpha}} \int_{R_\iota} \sum_{\mathbf{k}\in \N^N} \int_{\g_{2,r}^*} |\mu|^{-2\alpha} |D^{\delta_\iota} \psi(\mu)|^2  \big| F^{(\gamma_\iota)}\big((2(\textbf{k}+\textbf{s})+\textbf{r}) \cdot\textbf{b}^\mu\big)\big|^2 \notag \\
\times \bigg| \prod_{n=1}^N (b^\mu_n)^{1+a^\iota_n} (k_n+1)^{a^\iota_n} \bigg| \,d\nu_{\iota}(s) \, d\mu . \label{eq:GAFA-rewritten}
\end{align}
Note that $|\partial_v^{\delta_\iota}\psi(\mu)|\lesssim 2^{\ell\delta_\iota}$ for $\mu\in \supp \psi$. Thus, 
\[
|D^{\delta_\iota}\psi(\mu)|\lesssim 1 \quad \text{for } \mu\in \supp \psi.
\]
Recall that the assumptions of \cref{prop:weighted-plancherel-2} require that the multiplier $F$ is supported in the fixed compact subset $A\subseteq (0,\infty)$. Thus,
\[
|{\left( k_n + 1\right)b_n^\mu} |^{a_n^\iota}\lesssim_A 1
\quad \text{if } (2(\textbf{k}+\textbf{s})+\textbf{r}) \cdot\textbf{b}^\mu  \in \supp F.
\]
Hence, \cref{eq:GAFA-rewritten} is bounded by a constant times
\begin{align}
\sum_{\iota \in I_{\alpha}} \int_{R_\iota} & \sum_{\textbf{k} \in \N^N} \int_{\g_{2,r}^*} \left|\mu\right|^{-2\alpha} \tilde\chi(2^\ell |\mu|) \notag  \\
& \times \big|F^{(\gamma_\iota)}\big( (2(\textbf{k}+\textbf{s})+\textbf{r}) \cdot\textbf{b}^\mu \big)\big|^2 \, \prod_{n=1}^N b_n^\mu  \,d\mu \,d\nu_{\iota}(\textbf{s}), \label{eq:GAFA-rewritten-ii}
\end{align}
where $\tilde\chi$ is some non-negative bump function on $(0,\infty)$. We rewrite the integral over $\mu$ in polar coordinates, that is,
\[
\mu = \rho \omega \quad\text{for } \rho \in [0,\infty) \text{ and } |\omega|=1.
\]
Then, since $\mu\mapsto\textbf{b}^\mu$ is homogeneous of degree 1,
\[
\eigvp{k}{\mu} = \rho \eigvp{k}{\omega}.
\]
Thus, using $\rho=|\mu|\sim 2^{-\ell}$, \cref{eq:GAFA-rewritten-ii} is bounded by a constant times
\begin{align*}
2^{\ell (2\alpha -N -d_2 )}\sum_{\iota \in I_{\alpha}} & \int_{R_\iota}  \sum_{\textbf{k} \in \N^N} \int_{S^{d_2-1}} \int_0^\infty \tilde\chi(2^\ell \rho)  \\
& \times \big|F^{(\gamma_\iota)}\big( (2(\textbf{k}+\textbf{s})+\textbf{r}) \cdot\textbf{b}^\omega \rho \big)\big|^2 \, \prod_{n=1}^N b_n^\omega \,\frac{d\rho}{\rho} \,d\sigma(\omega) \,d\nu_{\iota}(\textbf{s}).
\end{align*}
The substitution $\lambda=(2(\textbf{k}+\textbf{s})+\textbf{r})\cdot\textbf{b}^\omega\rho$ yields that above expression equals
\begin{align*}
2^{\ell(2\alpha-N-d_2)}
\sum_{\iota \in I_{\alpha}} & \int_{R_\iota} \sum_{\textbf{k} \in \N^N} \int_{S^{d_2-1}} \int_0^\infty  \tilde\chi\bigg(\frac{2^\ell \lambda}{(2(\textbf{k}+\textbf{s})+\textbf{r}) \cdot\textbf{b}^\omega }\bigg)\\
& \times |F^{(\gamma_\iota)}(\lambda)|^2 
\prod_{n=1}^N b^\omega_n  \,\frac{d\lambda}{\lambda} \,d\sigma(\omega) \,d\nu_{\iota}(\textbf{s}).
\end{align*}
In the above sum in $\textbf{k}$, the number of non-vanishing summands is bounded by a constant times $(2^\ell \lambda)^N \prod_{n=1}^N (b^\omega_n)^{-1}$. Hence, using $\lambda\lesssim_A 1$ if $\lambda\in \supp F$, the last term can be bounded by
\[
2^{\ell(2\alpha-d_2)} \sum_{\iota \in I_{\alpha}} \int_0^\infty  
|F^{(\gamma_\iota)}(\lambda )|^2 \,d\lambda,
\]
which can be bounded by $C_\alpha 2^{\ell(2\alpha-d_2)}\norm{F}_{L^2_\alpha}^2$.
\end{proof}

To conclude the proof of the weighted Plancherel estimates \cref{eq:weighted-plancherel-2}, it remains to show the bounds in \eqref{eq:mu-bounds}. Following \cite{MaMue14b}, the arguments for these bounds rely heavily on the fact that the Lie algebra $\so_4$ of skew-symmetric real $4\times 4$ matrices admits a decomposition into two simple ideals isomorphic to $\su_2$ to write down the corresponding eigenvalues and spectral projections of $-J_\mu^2$.

\begin{proposition}
There is a direction $v\in\g_2$ with $|v|=1$ such that the bounds \cref{eq:mu-bounds} hold for the vector field $D=|\mu|\,\partial_v$.
\end{proposition}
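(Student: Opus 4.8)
\emph{Plan of proof.} The plan is to exploit the exceptional splitting $\so_4\cong\su_2\oplus\su_2$. Identify $\g_1\cong\R^4$ with the quaternions $\mathbb{H}$, so that $\langle\cdot,\cdot\rangle$ is the standard inner product; then the two simple ideals of $\so_4$ are realized by left and right quaternion multiplication, and every skew-adjoint operator on $\g_1$ is uniquely $\mathsf L_\alpha+\mathsf R_\beta$ with $\alpha,\beta\in\Im\mathbb{H}$, where $\mathsf L_\alpha x=\alpha x$ and $\mathsf R_\beta x=x\beta$. Accordingly write $J_\mu=\mathsf L_{p(\mu)}+\mathsf R_{q(\mu)}$ with linear maps $p,q\colon\g_2^*\to\Im\mathbb{H}\cong\R^3$. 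Using $\mathsf L_\alpha\mathsf R_\beta=\mathsf R_\beta\mathsf L_\alpha$ and $\mathsf L_\alpha^2=-|\alpha|^2\id$, $\mathsf R_\beta^2=-|\beta|^2\id$ for $\alpha,\beta\in\Im\mathbb{H}$, a short computation gives
\[
-J_\mu^2=\bigl(|p(\mu)|^2+|q(\mu)|^2\bigr)\id_{\g_1}-2\,\mathsf L_{p(\mu)}\mathsf R_{q(\mu)} ,
\]
and since $\mathsf L_{\hat p}\mathsf R_{\hat q}$, with $\hat p=p/|p|$ and $\hat q=q/|q|$, is a symmetric involution whose $\pm1$-eigenspaces are both $2$-dimensional (it cannot be $\pm\id$, as that would force $\hat p$ to be central in $\mathbb{H}$), the eigenvalues of $-J_\mu^2$ are $(|p(\mu)|\pm|q(\mu)|)^2$ with spectral projections $\tfrac12(\id_{\g_1}\pm\mathsf L_{\hat p(\mu)}\mathsf R_{\hat q(\mu)})$. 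Thus, on the Zariski-open set where $p(\mu)\ne0$, $q(\mu)\ne0$ and $|p(\mu)|\ne|q(\mu)|$, one has (in the notation of \cref{prop:rotation}) $N=2$, $r_0=0$, $r_1=r_2=1$, with $\{b_1^\mu,b_2^\mu\}=\bigl\{\,|p(\mu)|+|q(\mu)|,\ \bigl||p(\mu)|-|q(\mu)|\bigr|\,\bigr\}$ and $P_1^\mu,P_2^\mu$ as above; these are the formulas of \cite{MaMue14b}. (If $p\equiv0$ or $q\equiv0$ then $N=1$, $P_1^\mu=\id_{\g_1}$, and $b_1^\mu$ equals $|q(\mu)|$ resp.\ $|p(\mu)|$, real-analytic and homogeneous of degree $1$ away from the origin, so any unit $v$ works; assume $N=2$ from now on.)

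Next I use the Métivier hypothesis to locate the singular set. Invertibility of $J_\mu$ for $\mu\ne0$ means $|p(\mu)|\ne|q(\mu)|$ for $\mu\ne0$, i.e.\ the quadratic form $\mu\mapsto|p(\mu)|^2-|q(\mu)|^2$ on $\g_2^*$ is positive or negative definite. In particular at most one of $\ker p$, $\ker q$ is nontrivial: a nonzero vector in $\ker p$ would make this form negative there (and $q$ does not vanish on it, else $J_\mu=0$), while a nonzero vector in $\ker q$ would make it positive, contradicting definiteness. Interchanging the two ideals if necessary (this interchanges $p$ and $q$), we may assume $\ker q=\{0\}$, so $q$ is injective and $V:=\ker p$ is a subspace of $\g_2^*$ of dimension at most $2$ that contains the whole singular locus $\g_2^*\setminus\g_{2,r}^*$: $b_1^\mu=b_2^\mu$ forces $p(\mu)=0$, the map $\mu\mapsto\hat p(\mu)$ is real-analytic precisely off $\ker p$, and $\mu\mapsto\hat q(\mu)$ is real-analytic on all of $\g_2^*\setminus\{0\}$. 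Identifying $\g_2\cong\g_2^*$ via $\langle\cdot,\cdot\rangle$, choose $v\in\g_2$ with $|v|=1$ lying in $V$ when $V\ne\{0\}$ (arbitrary otherwise), and set $D=|\mu|\,\partial_v$.

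The key point is that differentiating along $v$ annihilates all non-smooth behaviour: since $p(v)=0$, the line $\mu+\R v$ stays off $\ker p$ whenever $\mu\notin\ker p$ and $p$ is constant on it, so $\partial_v^j|p(\mu)|=0$ and $\partial_v^j\hat p(\mu)=0$ for all $j\ge1$ on $\g_2^*\setminus\ker p\supseteq\g_{2,r}^*$; on the other hand $|q|$ and $\hat q$ are real-analytic on $\g_2^*\setminus\{0\}$ and homogeneous of degrees $1$ and $0$, so all their $v$-derivatives are bounded on the unit sphere $S^{d_2-1}$. For $\alpha\ge1$, expand $D^\alpha=\sum_{j=1}^{\alpha}c_j(\mu)\,\partial_v^j$ with $c_j$ homogeneous of degree $j$ and $|c_j(\mu)|\lesssim_\alpha|\mu|^j$ (there is no zeroth-order term because $D$ has none). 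From $b_n^\mu=|q(\mu)|\mp|p(\mu)|$ we get $\partial_v^jb_n^\mu=\partial_v^j|q(\mu)|$ for $j\ge1$, so $D^\alpha b_n^\mu$ is homogeneous of degree $1$ with $|D^\alpha b_n^\mu|\lesssim_\alpha|\mu|$ on $\g_{2,r}^*$; since $b_n^\mu\sim|\mu|$ by \eqref{eq:b_n-metivier} and homogeneity, this gives $|D^\alpha b_n^\mu|\le\kappa_\alpha\,b_n^\mu$. Similarly, the Leibniz rule applied to $P_n^\mu=\tfrac12(\id_{\g_1}\pm\mathsf L_{\hat p(\mu)}\mathsf R_{\hat q(\mu)})$ together with $\partial_v^j\hat p\equiv0$ for $j\ge1$ gives $\partial_v^jP_n^\mu=\pm\tfrac12\,\mathsf L_{\hat p(\mu)}\mathsf R_{\partial_v^j\hat q(\mu)}$, of operator norm $\tfrac12|\partial_v^j\hat q(\mu)|$, so $D^\alpha P_n^\mu$ is homogeneous of degree $0$ with $\|D^\alpha P_n^\mu\|\le\kappa_\alpha$ on $\g_{2,r}^*$; the case $\alpha=0$ is trivial since $\|P_n^\mu\|=1$. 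Taking $\kappa=\kappa_{\alpha_0}$ establishes \eqref{eq:mu-bounds} for the prescribed $\alpha_0$. The genuinely nontrivial ingredients are the explicit spectral formulas from the $\so_4\cong\su_2\oplus\su_2$ splitting and the observation that the Métivier hypothesis confines the singular locus to a single linear subspace $V$, so that one fixed direction $v\in V$ works simultaneously for the eigenvalues and the projections; the rest is routine homogeneity-and-compactness bookkeeping.
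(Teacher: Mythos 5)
Your proof is correct and follows essentially the same route as the paper's: both exploit the exceptional splitting $\so_4\cong\su_2\oplus\su_2$, arrive at the same explicit formulas for the eigenvalues $|p|\pm|q|$ and projections $\tfrac12(\id\pm\mathsf L_{\hat p}\mathsf R_{\hat q})$, use the Métivier hypothesis to confine the singular locus to a proper linear subspace of $\g_2^*$, and pick $v$ in that subspace so that $v$-derivatives annihilate the singular factor while the remaining factor (smooth and homogeneous off the origin) is controlled by compactness of the sphere. The only differences are presentational: you work in the quaternion model $J_\mu=\mathsf L_{p(\mu)}+\mathsf R_{q(\mu)}$ and derive the spectral data directly, whereas the paper uses the explicit $4\times4$ matrix families $J^\pm$ and cites Proposition~25 of \cite{MaMue14b}; and you replace the paper's subspace-intersection argument (combining $(\xi,0),(0,\eta)\in V$ to violate invertibility) with the observation that $|p|^2-|q|^2$ is definite, which is a slightly cleaner way to see that at most one of $\ker p,\ker q$ is nontrivial. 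Neither change affects the substance, and the $D^\alpha=\sum_{j\ge1}c_j\partial_v^j$ bookkeeping at the end correctly replaces the paper's ``similarly for higher derivatives''.
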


\begin{proof}
Given $\mu\in\g_2^*$, let again $J_\mu$ denote the skew-symmetric matrix associated with the bilinear form $\omega_\mu(x,x')=\mu([x,x'])$ on $\g_1$, that is,
\[
\langle J_\mu x,x'\rangle=\omega_\mu(x,x') \quad\text{for all } x,x'\in\g_1,
\]
where $\langle\cdot,\cdot\rangle$ denotes the inner product rendering the basis $X_1,\dots,X_{d_1},U_1,\dots,U_{d_2}$ an orthonormal basis of $\g$. The matrices $J_\mu$ give rise to an embedding $\g_2^*\to \so_4$ into the Lie algebra $\so_4$ of skew-symmetric real $4\times 4$ matrices via $\mu\mapsto J_\mu$, whence $\g_2^*$ may be identified with a three-dimensional subspace $V$ of $\so_4$ (which corresponds to the fact that every Lie algebra emerges as a factor along the center of a free Lie algebra). By identifying $\R^4$ with $\C^2$, we may identify the Lie algebra $\su_2$ of skew-Hermitian complex $2\times 2$ matrices with a subspace of $\so_4$, which is the set of matrices of the form
\[
J^-(\xi) := 
\begin{pmatrix}
0 & -\xi_3 & -\xi_1 & -\xi_2 \\
\xi_3 & 0 & \xi_2 & -\xi_1 \\
\xi_1 & -\xi_2 & 0 & \xi_3 \\
\xi_2 & \xi_1 & -\xi_3 & 0 \\
\end{pmatrix},
\quad \xi = (\xi_1,\xi_2,\xi_3) \in \R^3.
\]
Conjugating $\su_2$ by the $4\times 4$ diagonal matrix $K=\diag(1,1,1,-1)$ yields the Lie algebra $\widetilde{\su_2}=K\su_2 K$ consisting of the matrices
\[
J^+(\eta) := 
\begin{pmatrix}
0 & -\eta_3 & -\eta_1 & \eta_2 \\
\eta_3 & 0 & \eta_2 & \eta_1 \\
\eta_1 & -\eta_2 & 0 & -\eta_3 \\
-\eta_2 & -\eta_1 & \eta_3 & 0 \\
\end{pmatrix},
\quad \eta = (\eta_1,\eta_2,\eta_3) \in \R^3,
\]
which decomposes $\so_4$ into the simple ideals $\su_2$ and $\widetilde{\su_2}$. Using the coordinates $\xi,\eta$ we may identify $\su_2$ and $\widetilde\su_2$ each with $\R^3$, that is,
\[
\so_4 \cong \su_2 \oplus \widetilde{\su_2} \cong R^3_\xi \oplus \R^3_\eta.
\]
We endow $\so_4$ with the inner product $\langle J,J' \rangle = -\tr(JJ')$, $J,J'\in\so_4$. Hence, if we let $J(\xi,\eta)=J^-(\xi)+J^+(\eta)$, then
\begin{equation}\label{eq:HS-norm}
\langle J(\xi,\eta),J(\xi',\eta') \rangle = \langle \xi,\xi' \rangle_{\R^3}+\langle \eta,\eta' \rangle_{\R^3}.
\end{equation}
By Proposition 25 of \cite{MaMue14b}, the two eigenvalues of $-J(\xi,\eta)^2$ are given by\[
b_1(\xi,\eta) = \left|\xi\right|+\left|\eta\right|
\quad\text{and}\quad
b_2(\xi,\eta) = \left|\left|\xi\right|-\left|\eta\right|\right|.
\]
If both $\xi$ and $\eta$ are non-zero, the spectral projections onto the corresponding eigenspaces are given by
\begin{equation}\label{eq:projections}
\begin{aligned}
P_1(\xi,\eta) & = \frac 1 2 \id_{\R^4} - \frac 1 2 \frac{J^+(\eta)}{|\eta|} \frac{J^-(\xi)}{|\xi|},\\
P_2(\xi,\eta) & = \frac 1 2 \id_{\R^4} + \frac 1 2 \frac{J^+(\eta)}{|\eta|} \frac{J^-(\xi)}{|\xi|}.
\end{aligned}
\end{equation}
We identify the three-dimensional subspace $V\subseteq \so_4$ by a subspace of $\R^3_\xi\oplus R^3_\eta$ via the coordinates $\xi,\eta$. Since $G$ is assumed to be a Métiver group, the matrices $J(\xi,\eta)$ have to be invertible for all $(\xi,\eta)\in V\setminus\{0\}$. Thus, $|\xi|\neq |\eta|$ for all $(\xi,\eta)\in V\setminus\{0\}$, since the eigenvalue $b_2(\xi,\eta)$ would vanish otherwise. This in turn implies
\[
V\cap (\{0\}\oplus \R_\eta^3)=\{0\}
\quad\text{or}\quad
V\cap (\R_\xi^3\oplus\{0\})=\{0\}.
\]
(Otherwise, we could find elements $(0,\eta)\in V$ and $(\xi,0)\in V$ with $\xi,\eta\neq 0$ such that $|\xi|=|\eta|$.) Hence $V$ can be written as $V = \{ (\xi,A\xi) :\xi\in\R^3 \}$ or  $V = \{ (A\eta,\eta) :\eta\in\R^3 \}$ for some $3\times 3$ matrix $A$. In the former case (the latter case is similar), the eigenvalues of $-J(\xi)^2:=-J(\xi,A\xi)^2$ are given by
\begin{equation}\label{eq:xi-eigenvalues}
\begin{aligned}
b_1(\xi) & = b_1(\xi,A\xi)
  = \left|\xi\right|+\left|A\xi\right|,\\
b_2(\xi) & = b_2(\xi,A\xi)
  = \left|\left|\xi\right|-\left|A\xi\right|\right|.
\end{aligned}
\end{equation}
We distinguish the cases $\ker A=\{0\}$, $\ker A=\R^3$ and $\{0\}\neq \ker A\neq \R^3$. In the following, we identify points $\mu\in\g_2^*$ with points $\xi\in\R^3$ by the chosen coordinates.

If $\ker A=\{0\}$, we have the decomposition
\[
-J(\xi)^2 = b_1(\xi)^2 P_1(\xi) + b_2(\xi)^2 P_2(\xi) \quad \text{for all } \xi\neq 0,
\]
and both the eigenvalues $b_j(\xi)$ and spectral projections $P_j(\xi)=P_j(\xi,A\xi)$ in \cref{eq:projections} are smooth functions of $\xi$ away from the origin $\xi=0$. Note that $\xi\mapsto b_j(\xi)$ and $\xi\mapsto P_j(\xi)$ are homogeneous of degrees 1 and 0, respectively. Since $b_j(\xi)$ vanishes only at the origin, the desired bounds \cref{eq:mu-bounds} are a consequence of homogeneity and the fact that all derivatives of $\xi\mapsto b_j(\xi)$ and $\xi\mapsto P_j(\xi)$ are smooth functions on the 2-sphere in $\R^3_\xi$, and we may choose the vector $v\in\g_2$ with $|v|=1$ of the directional derivative $\partial_v$ arbitrarily.

If $\ker A=\R^3$, then \cref{eq:xi-eigenvalues} yields $b_1(\xi)=b_2(\xi)=|\xi|$, and $-J_\mu^2$ is just a multiple of the identity, whence $G$ is a even a Heisenberg type group in that case. Both the eigenvalue and the spectral projection (being the identity) are smooth away from the origin, and as in the previous case, we obtain \cref{eq:mu-bounds} by choosing an arbitrary direction $v\in\g_2$ with $|v|=1$.

Now suppose that $\{0\}\neq\ker A\neq\R^3$. Then we have the decomposition
\[
-J(\xi)^2 = b_1(\xi)^2 P_1(\xi) + b_2(\xi)^2 P_2(\xi) \quad \text{for all } \xi\notin \ker A.
\]
Note that the set $\g_{2,r}^*=\{\xi\in\R^3:\xi\notin \ker A\}$ is in particular a Zariski-open subset of $\g_2^*$. Both functions $\xi\mapsto b_j(\xi)$ and $\xi\mapsto P_j(\xi)$ are smooth on $\g_{2,r}^*$ and continuous on $\g_2$, but their directional derivatives may admit singularities along $\g_2^*\setminus\g_{2,r}^*$ if the direction of the derivative intersects this set transversely. However, thanks to fact that the Zariski-closed set $\g_2^*\setminus\g_{2,r}^*$ is either a plane or a line, choosing $v\in\ker A\setminus\{0\}$ with $|v|=1$, every line pointing in the same direction as $v$ either lies completely in the Zariski-closed set $\g_2^*\setminus\g_{2,r}^*$ or has no intersection with it, meaning that $\xi\mapsto b_j(\xi)$ and $\xi\mapsto P_j(\xi)$ are smooth away from the origin if we only take derivatives along lines with direction $v$.

More precisely, for $v\in\ker A\setminus\{0\}$, since $A(\xi+tv)=A\xi$ for $t\in\R$ then, the first derivatives of $b_1(\xi)$ and $b_2(\xi)$ are given by
\[
\partial_v b_1(\xi) = \partial_v(|\xi|)
\quad\text{and}\quad
\partial_v b_2(\xi) = \sgn(|\xi|-|A\xi|)\, \partial_v(|\xi|),
\]
which are both smooth functions away from the origin since $b_2(\xi)=\left|\left|\xi\right|-\left|A\xi\right|\right|\neq 0$ for $\xi\neq 0$. Similarly, for the projections $P_j(\xi)$, we observe
\[
\partial_v \left(\frac{J^+(A\xi)}{|A\xi|} \frac{J^-(\xi)}{|\xi|}\right)
 = \frac{J^+(A\xi)}{|A\xi|} \,\partial_v \frac{J^-(\xi)}{|\xi|},
\]
meaning that $v$-derivatives only hit the second factor $J^-(\xi)/|\xi|$ in \cref{eq:projections}, which is smooth away from the origin. On the other hand, due to \cref{eq:HS-norm},
\[
\left| \frac{J^+(A\xi)}{|A\xi|} \right|\le 1.
\]
Thus, arguing again via smoothness and homogeneity for the factor $J^-(\xi)/|\xi|$, we obtain the desired bounds \cref{eq:mu-bounds}.
\end{proof}

\begin{bibdiv}
\begin{biblist}

\bib{AhCoMaMue20}{article}{
      author={Ahrens, Julian},
      author={Cowling, Michael~G.},
      author={Martini, Alessio},
      author={M\"{u}ller, Detlef},
       title={Quaternionic spherical harmonics and a sharp multiplier theorem
  on quaternionic spheres},
        date={2020},
        ISSN={0025-5874},
     journal={Math. Z.},
      volume={294},
      number={3-4},
       pages={1659\ndash 1686},
         url={https://doi.org/10.1007/s00209-019-02313-w},
      review={\MR{4074054}},
}

\bib{Ad62}{article}{
      author={Adams, J.~F.},
       title={Vector fields on spheres},
        date={1962},
        ISSN={0003-486X},
     journal={Ann. of Math. (2)},
      volume={75},
       pages={603\ndash 632},
         url={https://doi.org/10.2307/1970213},
      review={\MR{139178}},
}

\bib{Ch85}{article}{
      author={Christ, Michael},
       title={On almost everywhere convergence of {B}ochner-{R}iesz means in
  higher dimensions},
        date={1985},
        ISSN={0002-9939},
     journal={Proc. Amer. Math. Soc.},
      volume={95},
      number={1},
       pages={16\ndash 20},
         url={https://doi.org/10.2307/2045566},
      review={\MR{796439}},
}

\bib{Ch91}{article}{
      author={Christ, Michael},
       title={{$L^p$} bounds for spectral multipliers on nilpotent groups},
        date={1991},
        ISSN={0002-9947},
     journal={Trans. Amer. Math. Soc.},
      volume={328},
      number={1},
       pages={73\ndash 81},
         url={https://doi.org/10.2307/2001877},
      review={\MR{1104196}},
}

\bib{ChHeSi16}{article}{
      author={Chen, Peng},
      author={Hebisch, Waldemar},
      author={Sikora, Adam},
       title={Bochner-{R}iesz profile of anharmonic oscillator
  {$\mathcal{L}=-\frac{d^2}{dx^2}+|x|$}},
        date={2016},
        ISSN={0022-1236},
     journal={J. Funct. Anal.},
      volume={271},
      number={11},
       pages={3186\ndash 3241},
         url={https://doi.org/10.1016/j.jfa.2016.08.016},
      review={\MR{3554704}},
}

\bib{CoKlSi11}{article}{
      author={Cowling, Michael~G.},
      author={Klima, Oldrich},
      author={Sikora, Adam},
       title={Spectral multipliers for the {K}ohn sublaplacian on the sphere in
  {$\mathbb C^n$}},
        date={2011},
        ISSN={0002-9947},
     journal={Trans. Amer. Math. Soc.},
      volume={363},
      number={2},
       pages={611\ndash 631},
         url={https://doi.org/10.1090/S0002-9947-2010-04920-7},
      review={\MR{2728580}},
}

\bib{ChOu16}{article}{
      author={Chen, Peng},
      author={Ouhabaz, El~Maati},
       title={Weighted restriction type estimates for {G}rushin operators and
  application to spectral multipliers and {B}ochner-{R}iesz summability},
        date={2016},
        ISSN={0025-5874},
     journal={Math. Z.},
      volume={282},
      number={3-4},
       pages={663\ndash 678},
         url={https://doi.org/10.1007/s00209-015-1558-9},
      review={\MR{3473637}},
}

\bib{ChOuSiYa16}{article}{
      author={Chen, Peng},
      author={Ouhabaz, El~Maati},
      author={Sikora, Adam},
      author={Yan, Lixin},
       title={Restriction estimates, sharp spectral multipliers and endpoint
  estimates for {B}ochner-{R}iesz means},
        date={2016},
        ISSN={0021-7670},
     journal={J. Anal. Math.},
      volume={129},
       pages={219\ndash 283},
         url={https://doi.org/10.1007/s11854-016-0021-0},
      review={\MR{3540599}},
}

\bib{CoSi01}{article}{
      author={Cowling, Michael},
      author={Sikora, Adam},
       title={A spectral multiplier theorem for a sublaplacian on
  {$\mathrm{SU}(2)$}},
        date={2001},
        ISSN={0025-5874},
     journal={Math. Z.},
      volume={238},
      number={1},
       pages={1\ndash 36},
         url={https://doi.org/10.1007/PL00004894},
      review={\MR{1860734}},
}

\bib{DaMa20}{article}{
      author={Dall'Ara, Gian~Maria},
      author={Martini, Alessio},
       title={A robust approach to sharp multiplier theorems for {G}rushin
  operators},
        date={2020},
        ISSN={0002-9947},
     journal={Trans. Amer. Math. Soc.},
      volume={373},
      number={11},
       pages={7533\ndash 7574},
         url={https://doi.org/10.1090/tran/7844},
      review={\MR{4169667}},
}

\bib{DaMa22a}{article}{
      author={Dall'Ara, Gian~Maria},
      author={Martini, Alessio},
       title={{A}n optimal multiplier theorem for {G}rushin operators in the
  plane, {I}},
        date={2022},
     journal={Rev. Mat. Iberoam.},
        note={Published online first},
}

\bib{DaMa22b}{article}{
      author={Dall'Ara, Gian~Maria},
      author={Martini, Alessio},
       title={An optimal multiplier theorem for {G}rushin operators in the
  plane, {II}},
        date={2022},
        ISSN={1069-5869},
     journal={J. Fourier Anal. Appl.},
      volume={28},
      number={2},
       pages={Paper No. 32, 29},
         url={https://doi.org/10.1007/s00041-022-09931-9},
      review={\MR{4402142}},
}

\bib{Fe73}{article}{
      author={Fefferman, Charles},
       title={A note on spherical summation multipliers},
        date={1973},
        ISSN={0021-2172},
     journal={Israel J. Math.},
      volume={15},
       pages={44\ndash 52},
         url={https://doi.org/10.1007/BF02771772},
      review={\MR{320624}},
}

\bib{Fo89}{book}{
      author={Folland, Gerald~B.},
       title={Harmonic analysis in phase space},
      series={Annals of Mathematics Studies},
   publisher={Princeton University Press, Princeton, NJ},
        date={1989},
      volume={122},
        ISBN={0-691-08527-7; 0-691-08528-5},
         url={https://doi.org/10.1515/9781400882427},
      review={\MR{983366}},
}

\bib{FoSt82}{book}{
      author={Folland, Gerald~B.},
      author={Stein, Elias~M.},
       title={Hardy spaces on homogeneous groups},
      series={Mathematical Notes},
   publisher={Princeton University Press, Princeton, N.J.; University of Tokyo
  Press, Tokyo},
        date={1982},
      volume={28},
        ISBN={0-691-08310-X},
      review={\MR{657581}},
}

\bib{GuHaSi13}{article}{
      author={Guillarmou, Colin},
      author={Hassell, Andrew},
      author={Sikora, Adam},
       title={Restriction and spectral multiplier theorems on asymptotically
  conic manifolds},
        date={2013},
        ISSN={2157-5045},
     journal={Anal. PDE},
      volume={6},
      number={4},
       pages={893\ndash 950},
         url={https://doi.org/10.2140/apde.2013.6.893},
      review={\MR{3092733}},
}

\bib{He93}{article}{
      author={Hebisch, Waldemar},
       title={Multiplier theorem on generalized {H}eisenberg groups},
        date={1993},
        ISSN={0010-1354},
     journal={Colloq. Math.},
      volume={65},
      number={2},
       pages={231\ndash 239},
         url={https://doi.org/10.4064/cm-65-2-231-239},
      review={\MR{1240169}},
}

\bib{Hoe60}{article}{
      author={H{\"{o}}rmander, Lars},
       title={Estimates for translation invariant operators in {$L^{p}$}
  spaces},
        date={1960},
        ISSN={0001-5962},
     journal={Acta Math.},
      volume={104},
       pages={93\ndash 140},
         url={https://doi.org/10.1007/BF02547187},
      review={\MR{121655}},
}

\bib{Ka80}{article}{
      author={Kaplan, Aroldo},
       title={Fundamental solutions for a class of hypoelliptic {PDE} generated
  by composition of quadratic forms},
        date={1980},
        ISSN={0002-9947},
     journal={Trans. Amer. Math. Soc.},
      volume={258},
      number={1},
       pages={147\ndash 153},
         url={https://doi.org/10.2307/1998286},
      review={\MR{554324}},
}

\bib{KoRi07}{article}{
      author={Koch, Herbert},
      author={Ricci, Fulvio},
       title={Spectral projections for the twisted {L}aplacian},
        date={2007},
        ISSN={0039-3223},
     journal={Studia Math.},
      volume={180},
      number={2},
       pages={103\ndash 110},
         url={https://doi.org/10.4064/sm180-2-1},
      review={\MR{2314091}},
}

\bib{LeRoSe14}{incollection}{
      author={Lee, Sanghyuk},
      author={Rogers, Keith~M.},
      author={Seeger, Andreas},
       title={Square functions and maximal operators associated with radial
  {F}ourier multipliers},
        date={2014},
   booktitle={Advances in analysis. {T}he legacy of {E}lias {M}. {S}tein},
      series={Princeton Mathematical Series},
      volume={50},
   publisher={Princeton University Press, Princeton, NJ},
       pages={273\ndash 302},
      review={\MR{3329855}},
}

\bib{LiZh11}{article}{
      author={Liu, Heping},
      author={Wang, Yingzhan},
       title={A restriction theorem for the {H}-type groups},
        date={2011},
        ISSN={0002-9939},
     journal={Proc. Amer. Math. Soc.},
      volume={139},
      number={8},
       pages={2713\ndash 2720},
         url={https://doi.org/10.1090/S0002-9939-2011-10907-9},
      review={\MR{2801610}},
}

\bib{Ma11}{article}{
      author={Martini, Alessio},
       title={Spectral theory for commutative algebras of differential
  operators on {L}ie groups},
        date={2011},
        ISSN={0022-1236},
     journal={J. Funct. Anal.},
      volume={260},
      number={9},
       pages={2767\ndash 2814},
         url={https://doi.org/10.1016/j.jfa.2011.01.008},
      review={\MR{2772351}},
}

\bib{Ma12}{article}{
      author={Martini, Alessio},
       title={Analysis of joint spectral multipliers on {L}ie groups of
  polynomial growth},
        date={2012},
        ISSN={0373-0956},
     journal={Ann. Inst. Fourier (Grenoble)},
      volume={62},
      number={4},
       pages={1215\ndash 1263},
         url={http://aif.cedram.org/item?id=AIF_2012__62_4_1215_0},
      review={\MR{3025742}},
}

\bib{Ma15}{article}{
      author={Martini, Alessio},
       title={Spectral multipliers on {H}eisenberg-{R}eiter and related
  groups},
        date={2015},
        ISSN={0373-3114},
     journal={Ann. Mat. Pura Appl. (4)},
      volume={194},
      number={4},
       pages={1135\ndash 1155},
         url={https://doi.org/10.1007/s10231-014-0414-6},
      review={\MR{3357697}},
}

\bib{Me84}{incollection}{
      author={Melrose, Richard},
       title={Propagation for the wave group of a positive subelliptic
  second-order differential operator},
        date={1986},
   booktitle={Hyperbolic equations and related topics ({K}atata/{K}yoto,
  1984)},
   publisher={Academic Press, Boston, MA},
       pages={181\ndash 192},
      review={\MR{925249}},
}

\bib{MaMue14a}{article}{
      author={Martini, Alessio},
      author={M\"{u}ller, Detlef},
       title={A sharp multiplier theorem for {G}rushin operators in arbitrary
  dimensions},
        date={2014},
        ISSN={0213-2230},
     journal={Rev. Mat. Iberoam.},
      volume={30},
      number={4},
       pages={1265\ndash 1280},
         url={https://doi.org/10.4171/RMI/814},
      review={\MR{3293433}},
}

\bib{MaMue14b}{article}{
      author={Martini, Alessio},
      author={M\"{u}ller, Detlef},
       title={Spectral multiplier theorems of {E}uclidean type on new classes
  of two-step stratified groups},
        date={2014},
        ISSN={0024-6115},
     journal={Proc. Lond. Math. Soc. (3)},
      volume={109},
      number={5},
       pages={1229\ndash 1263},
         url={https://doi.org/10.1112/plms/pdu033},
      review={\MR{3283616}},
}

\bib{MaMue16}{article}{
      author={Martini, Alessio},
      author={M\"{u}ller, Detlef},
       title={Spectral multipliers on 2-step groups: topological versus
  homogeneous dimension},
        date={2016},
        ISSN={1016-443X},
     journal={Geom. Funct. Anal.},
      volume={26},
      number={2},
       pages={680\ndash 702},
         url={https://doi.org/10.1007/s00039-016-0365-8},
      review={\MR{3513881}},
}

\bib{MaMe90}{article}{
      author={Mauceri, Giancarlo},
      author={Meda, Stefano},
       title={Vector-valued multipliers on stratified groups},
        date={1990},
        ISSN={0213-2230},
     journal={Rev. Mat. Iberoam.},
      volume={6},
      number={3-4},
       pages={141\ndash 154},
         url={https://doi.org/10.4171/RMI/100},
      review={\MR{1125759}},
}

\bib{MaMueNi23}{article}{
      author={Martini, Alessio},
      author={M\"{u}ller, Detlef},
      author={Nicolussi~Golo, Sebastiano},
       title={Spectral multipliers and wave equation for sub-{L}aplacians:
  lower regularity bounds of {E}uclidean type},
        date={2023},
        ISSN={1435-9855,1435-9863},
     journal={J. Eur. Math. Soc. (JEMS)},
      volume={25},
      number={3},
       pages={785\ndash 843},
         url={https://doi.org/10.4171/jems/1191},
      review={\MR{4577953}},
}

\bib{MueSe04}{article}{
      author={M\"{u}ller, Detlef},
      author={Seeger, Andreas},
       title={Singular spherical maximal operators on a class of two step
  nilpotent {L}ie groups},
        date={2004},
        ISSN={0021-2172},
     journal={Israel J. Math.},
      volume={141},
       pages={315\ndash 340},
         url={https://doi.org/10.1007/BF02772226},
      review={\MR{2063040}},
}

\bib{MueSt94}{article}{
      author={M{\"{u}}ller, Detlef},
      author={Stein, Elias~M.},
       title={On spectral multipliers for {H}eisenberg and related groups},
        date={1994},
        ISSN={0021-7824},
     journal={J. Math. Pures Appl. (9)},
      volume={73},
      number={4},
       pages={413\ndash 440},
      review={\MR{1290494}},
}

\bib{Mue04}{misc}{
      author={M{\"{u}}ller, Detlef},
       title={{M}arcinkiewicz multipliers and multi-parameter structure on
  {H}eisenberg groups},
        date={2004},
        note={Lecture notes, Padova},
}

\bib{Mue89}{article}{
      author={M{\"{u}}ller, Detlef},
       title={On {R}iesz means of eigenfunction expansions for the
  {K}ohn-{L}aplacian},
        date={1989},
        ISSN={0075-4102},
     journal={J. Reine Angew. Math.},
      volume={401},
       pages={113\ndash 121},
         url={https://doi.org/10.1515/crll.1989.401.113},
      review={\MR{1018056}},
}

\bib{Mue90}{article}{
      author={M{\"{u}}ller, Detlef},
       title={A restriction theorem for the {H}eisenberg group},
        date={1990},
        ISSN={0003-486X},
     journal={Ann. of Math. (2)},
      volume={131},
      number={3},
       pages={567\ndash 587},
         url={https://doi.org/10.2307/1971471},
      review={\MR{1053491}},
}

\bib{Ni22}{article}{
      author={Niedorf, Lars},
       title={A {$p$}-specific spectral multiplier theorem with sharp
  regularity bound for {G}rushin operators},
        date={2022},
        ISSN={0025-5874},
     journal={Math. Z.},
      volume={301},
      number={4},
       pages={4153\ndash 4173},
         url={https://doi.org/10.1007/s00209-022-03029-0},
      review={\MR{4449743}},
}

\bib{Ni24}{article}{
      author={Niedorf, Lars},
       title={An {$L^p$}-spectral multiplier theorem with sharp {$p$}-specific
  regularity bound on {H}eisenberg type groups},
        date={2024},
        ISSN={1069-5869,1531-5851},
     journal={J. Fourier Anal. Appl.},
      volume={30},
      number={2},
       pages={Paper No. 22, 35},
         url={https://doi.org/10.1007/s00041-024-10075-1},
      review={\MR{4728249}},
}

\bib{Ni25R}{misc}{
      author={Niedorf, Lars},
       title={Restriction type estimates on general two-step stratified {L}ie
  groups},
        date={2024},
        note={\href{https://arxiv.org/abs/2304.12960}{https://arxiv.org/abs/2304.12960}}
}

\bib{Se86}{article}{
      author={Seeger, Andreas},
       title={On quasiradial {F}ourier multipliers and their maximal
  functions},
        date={1986},
        ISSN={0075-4102,1435-5345},
     journal={J. Reine Angew. Math.},
      volume={370},
       pages={61\ndash 73},
         url={https://doi.org/10.1515/crll.1986.370.61},
      review={\MR{852510}},
}

\bib{SeSo89}{article}{
      author={Seeger, Andreas},
      author={Sogge, Christopher~D.},
       title={On the boundedness of functions of (pseudo-) differential
  operators on compact manifolds},
        date={1989},
        ISSN={0012-7094},
     journal={Duke Math. J.},
      volume={59},
      number={3},
       pages={709\ndash 736},
         url={https://doi.org/10.1215/S0012-7094-89-05932-2},
      review={\MR{1046745}},
}

\bib{SiWr01}{article}{
      author={Sikora, Adam},
      author={Wright, James},
       title={Imaginary powers of {L}aplace operators},
        date={2001},
        ISSN={0002-9939},
     journal={Proc. Amer. Math. Soc.},
      volume={129},
      number={6},
       pages={1745\ndash 1754},
         url={https://doi.org/10.1090/S0002-9939-00-05754-3},
      review={\MR{1814106}},
}

\bib{SiYaYa14}{article}{
      author={Sikora, Adam},
      author={Yan, Lixin},
      author={Yao, Xiaohua},
       title={Sharp spectral multipliers for operators satisfying generalized
  {G}aussian estimates},
        date={2014},
        ISSN={0022-1236},
     journal={J. Funct. Anal.},
      volume={266},
      number={1},
       pages={368\ndash 409},
         url={https://doi.org/10.1016/j.jfa.2013.08.028},
      review={\MR{3121735}},
}

\bib{Ta99}{article}{
      author={Tao, Terence},
       title={The {B}ochner-{R}iesz conjecture implies the restriction
  conjecture},
        date={1999},
        ISSN={0012-7094},
     journal={Duke Math. J.},
      volume={96},
      number={2},
       pages={363\ndash 375},
         url={https://doi.org/10.1215/S0012-7094-99-09610-2},
      review={\MR{1666558}},
}

\bib{Th93}{book}{
      author={Thangavelu, Sundaram},
       title={Lectures on {H}ermite and {L}aguerre expansions},
      series={Mathematical Notes},
   publisher={Princeton University Press, Princeton, NJ},
        date={1993},
      volume={42},
        ISBN={0-691-00048-4},
      review={\MR{1215939}},
}

\bib{VaSaCo92}{book}{
      author={Varopoulos, Nicholas~T.},
      author={Saloff-Coste, Laurent},
      author={Coulhon, Thierry},
       title={Analysis and geometry on groups},
      series={Cambridge Tracts in Mathematics},
   publisher={Cambridge University Press, Cambridge},
        date={1992},
      volume={100},
        ISBN={0-521-35382-3},
      review={\MR{1218884}},
}

\end{biblist}
\end{bibdiv}

\end{document}